\newtheorem{example}{Example}[section]
\newcommand\dt {{\Delta t}}
\newcommand{\hf}{\nicefrac{1}{2}}
\newcommand{\Dh}{\Delta_h}
\newcommand{\nabh}{\nabla_{\! h}}
\newcommand{\nrm}[1]{\left\| #1 \right\|}
\newcommand{\ciptwo}[2]{\left\langle #1 , #2 \right\rangle_\Omega}
\newcommand{\cipgen}[3]{\left\langle #1 , #2 \right\rangle_{#3}}
\newcommand{\eipx}[2]{\left[ #1 , #2 \right]_{\rm x}}
\newcommand{\eipy}[2]{\left[ #1 , #2 \right]_{\rm y}}
\newcommand{\eipvec}[2]{\left[ #1 , #2 \right]_{\Omega}}
\newcommand{\sumi}{\sum_{\ell = 1}^9}
\newcommand{\sumij}{\sum_{i,j=1}^N}
\crefname{hypothesis}{Hypothesis}{Hypotheses}
\title{A positivity-preserving, energy stable scheme for a Ternary Cahn-Hilliard system with the singular interfacial parameters} 
\author{Lixiu Dong\thanks{College of Education for the Future, Beijing Normal University at Zhuhai, Guangdong 519087, P.R. China (\email{lxdong@mail.bnu.edu.cn}).}
\and Cheng Wang\thanks{Department of Mathematics, The University of Massachusetts, North Dartmouth, MA 02747, USA (corresponding author: \email{cwang1@umassd.edu}).}
\and Steven M. Wise\thanks{Department of Mathematics, The University of Tennessee, Knoxville, TN 37996, USA (\email{swise1@utk.edu}).}
\and Zhengru Zhang\thanks{Laboratory of Mathematics and Complex Systems, Ministry of Education and School of Mathematical Sciences, Beijing Normal University, Beijing 100875, P.R. China (\email{zrzhang@bnu.edu.cn}).}}
\begin{document} 

\maketitle

\begin{abstract}
In this paper, we construct and analyze a uniquely solvable, positivity preserving and unconditionally energy stable finite-difference scheme for the periodic three-component Macromolecular Microsphere Composite (MMC) hydrogels system, a ternary Cahn-Hilliard system with a Flory-Huggins-deGennes free energy potential. The proposed scheme is based on a convex-concave decomposition of the given energy functional with two variables, and the centered difference method is adopted in space. We provide a theoretical justification that this numerical scheme has a pair of unique solutions, such that the positivity is always preserved for all the singular terms, i.e., not only two phase variables are always between $0$ and $1$, but also the sum of two phase variables is between $0$ and $1$, at a point-wise level. In addition, we use the local Newton approximation and multigrid method to solve this nonlinear numerical scheme, and various numerical results are presented, including the numerical convergence test, positivity-preserving property test, energy dissipation and mass conservation properties.
\end{abstract}

\begin{keywords}
  ternary Cahn-Hilliard equation, Flory-Huggins-deGennes energy, variable diffusive coefficient,
  energy stability, positivity preserving
\end{keywords}

\begin{AMS}
  35K35, 65M06, 65M12
\end{AMS}


\section{Introduction}\label{sec-intro}
\setcounter{equation}{0}

Macromolecular microsphere composite (MMC) hydrogels, a class of polymeric materials, have attracted theoretical and experimental studies due to their well-defined network microstructures and high mechanical strength. Various methods have been developed to model the evolution of MMC hydrogels. In \cite{Zhai2012Investigation}, the authors presented a binary mathematical model to describe the periodic structures and the phase transitions of the MMC hydrogels based on Boltzmann entropy theory. Their model, the MMC-TDGL equation, is structurally similar to the Cahn-Hilliard equation. Most existing works for the MMC-TDGL equation are based on the two-phase model; see \cite{Dong2019a, Lixiao2015, Li2016An, Lixiao2}, et cetera. 

The Allen-Cahn and Cahn-Hilliard equations are well-known gradient flows with either polynomial Ginzburg-Landau or singular Flory-Huggins-type free energy density. These equations model spinodal decomposition and phase separation in a two-phase fluid in either the non-conserved or conserved setting, respectively. There have been many theoretical analyses and numerical approximations for these kinds of gradient flows in the two-phase case~\cite{chen19a, chen16, chenY18, cheng2019a, cheng16a, diegel15a, diegel17, diegel16, fengW18b, han15, liuY17, Yan18}. For the ternary Cahn-Hilliard system, the general framework is to adopt three independent phase variables $(\phi_1,\phi_2,\phi_3)$ while enforcing a mass conservation (or ``no-voids") constraint $\phi_1+\phi_2+\phi_3=1$. See the related works~\cite{Boyer2006Numerical, Boyer2011Numerical, Yang2017Numerical}.

In this work, we consider a ternary time-dependent Ginzburg-Landau mesoscopic model with a given coarse-grained free energy, which is an improvement in some ways over the model proposed in~\cite{Zhai2012Investigation}, as it removes the assumption that the number of the graft chain around a large ball is proportional to the perimeter in the modeling process. For this ternary Cahn-Hilliard system, the following singular energy potential is taken into consideration:
	\begin{equation}
G_o(\phi_1,\phi_2,\phi_3) = \int_{\Omega} \left\{ S_o(\phi_1,\phi_2,\phi_3)+ \frac{1}{36} \sum_{i=1}^3 \frac{\varepsilon_i^2}{\phi_i} |\nabla\phi_i|^2 + H_o(\phi_1,\phi_2,\phi_3)\right\} d{\bf x},
	\label{continuous energy}
	\end{equation}
where $S_o(\phi_1,\phi_2,\phi_3)+ H_o(\phi_1,\phi_2,\phi_3)$ is the reticular free energy density:
	\begin{align*}
 S_o(\phi_1,\phi_2,\phi_3) & = \frac{\phi_1}{M_0} \ln \frac{\alpha \phi_1}{M_0}+\frac{\phi_2}{N_0} \ln \frac{\beta \phi_2}{N_0} + \phi_3 \ln \phi_3,
 	\\
 H_o(\phi_1,\phi_2,\phi_3) & = \chi_{12} \phi_1 \phi_2 + \chi_{13} \phi_1 \phi_3 + \chi_{23}\phi_2 \phi_3.
	\end{align*}
The term $S_o$ is often called the ideal solution part, and $H_o$ stands for the entropy of mixing part. The sum $S_o+ H_o$ is also called the regular solution model in material science and the Flory-Huggins model in polymer chemistry. The domain $\Omega \subset \mathbb{R}^2$ is assumed to be open, bounded, and simply connected. We focus on the 2-D case for simplicity of presentation, while an extension to the 3-D gradient flow is straightforward. The unknown phase variable $\phi_1$, $\phi_2$ and $\phi_3$ are conserved field variables, representing the concentration of the macromolecular microsphere, the concentration of the polymer chain, and the concentration of the solvent, respectively. These three phase variables are subject to the ``no-voids" constraint $\phi_1 + \phi_2 + \phi_3= 1$. We denote by $M_0$ the relative volume of one macromolecular microsphere, and by $N_0$ the degree of polymerization of the polymer chains. The coefficient $\varepsilon_i$ is called the statistical segment length of the $i$-th component, which is always positive. The parameters $\alpha$ and $\beta$ depend on $M_0$ and $N_0$:
   \[
\alpha= \pi\left(\sqrt{\frac{M_0}{\pi}}+ \frac{N_0}{2}\right)^2,\quad \beta=
2\sqrt{\frac{M_0}{\pi}} + N_0.
   \]

By $\chi_{12}, \chi_{13}$, and $\chi_{23}$ we denote the Huggins interaction parameters between (i) the macromolecular microspheres and polymer chains, (ii) the macromolecular microspheres and solvent, and (iii) the polymer chains and solvent, respectively. All these parameters are positive. In this paper, we choose parameters satisfying the inequality
	\[
4\chi_{13}\chi_{23}-(\chi_{12}-\chi_{13}-\chi_{23})^2 >0 ,
	\]
which guarantees the concavity of the entropy of mixing $H_0$ term, as we shall see.

Making use of the no-voids constraint $\phi_3 = 1 - \phi_1 - \phi_2$, we can rewrite the energy functional as
	\begin{align}
	\label{continuous energy1}
G(\phi_1,\phi_2) &= \int_{\Omega} \bigg\{ S(\phi_1,\phi_2)+ \frac{\varepsilon_1^2|\nabla\phi_1|^2}{36\phi_1} + \frac{\varepsilon_2^2|\nabla\phi_2|^2}{36\phi_2}+
\frac{\varepsilon_3^2|\nabla(1-\phi_1-\phi_2)|^2}{36(1-\phi_1-\phi_2)}
	\\
& \quad + H(\phi_1,\phi_2) \bigg\} d{\bf x},	
	\nonumber
	\end{align}
where, naturally,
	\begin{align*}
 S(\phi_1,\phi_2) & = \frac{\phi_1}{M_0} \ln \frac{\alpha \phi_1}{M_0}+\frac{\phi_2}{N_0} \ln \frac{\beta \phi_2}{N_0} + (1-\phi_1-\phi_2) \ln (1-\phi_1-\phi_2),
 	\\
 H(\phi_1,\phi_2) &= \chi_{12} \phi_1 \phi_2 + \chi_{13} \phi_1 (1-\phi_1-\phi_2) + \chi_{23}\phi_2 (1-\phi_1-\phi_2).
     \end{align*}
The ternary MMC dynamic equations become the $H^{-1}$ gradient flows associated with the given energy functional \eqref{continuous energy1}:
     \begin{equation}\label{MMC3term-equation}
\partial_t \phi_1 = \mathcal{M}_1 \Delta \mu_1,\quad \partial_t \phi_2 = \mathcal{M}_2 \Delta
\mu_2,
     \end{equation}
 where $\mathcal{M}_1, \mathcal{M}_2 >0$ are mobilities, which are assumed to be positive constants. The terms $\mu_1$ and $\mu_2$ are the chemical potentials with respect to $\phi_1$ and $\phi_2$,
 respectively, i.e.,
     \begin{align}
\mu_1:=\delta_{\phi_1}G & = \frac{1}{M_0}\ln \frac{\alpha \phi_1}{M_0} - \ln(1-\phi_1-\phi_2) - 2 \chi_{13} \phi_1 +(\chi_{12}-\chi_{13}-\chi_{23})\phi_2
	\nonumber
	\\
& \quad  + \chi_{13} + \frac{1}{M_0} -1 - \frac{\varepsilon_1^2|\nabla\phi_1|^2}{36\phi_1^2} - \nabla \cdot \left(\frac{\varepsilon_1^2\nabla \phi_1}{18\phi_1}\right)
     \label{mu_1}
	\\
& \quad + \frac{\varepsilon_3^2|\nabla(1-\phi_1-\phi_2)|^2}{36(1-\phi_1-\phi_2)^2}
   + \nabla \cdot \left(\frac{\varepsilon_3^2 \nabla (1-\phi_1-\phi_2)}{18(1-\phi_1-\phi_2)}
     \right),
	\nonumber
	\\
\mu_2:=\delta_{\phi_2}G & = \frac{1}{N_0}\ln \frac{\beta \phi_2}{N_0} - \ln(1-\phi_1-\phi_2) - 2 \chi_{23} \phi_2 +(\chi_{12}-\chi_{13}-\chi_{23})\phi_1
	\nonumber
	\\
& \quad  + \chi_{23} + \frac{1}{N_0} -1  - \frac{\varepsilon_2^2|\nabla\phi_2|^2}{36\phi_2^2} - \nabla \cdot \left(\frac{\varepsilon_2^2\nabla \phi_2}{18\phi_2}\right)
	\label{mu_2}
	\\
& \quad  + \frac{\varepsilon_3^2|\nabla(1-\phi_1-\phi_2)|^2}{36(1-\phi_1-\phi_2)^2} + \nabla \cdot \left(\frac{\varepsilon_3^2 \nabla (1-\phi_1-\phi_2)}{18(1-\phi_1-\phi_2)} \right).
     \nonumber
     \end{align}
For simplicity,
we assume that periodic boundary conditions hold. It is then easy to see that the energy is non-increasing for the ternary MMC model. The evolution equations \eqref{MMC3term-equation} are mass conservative; the mass fluxes are proportional to the gradients of the respective chemical potentials.

Concerning the ternary Cahn-Hilliard type model with polynomial Ginzburg-Landau free energy density potential and constant surface diffusion coefficients, there have been quite a few existing numerical works to address the issue of energy stability. For example, several finite element schemes have been studied in an earlier work~\cite{Boyer2011Numerical}, based on different semi-implicit methods to pursue an energy dissipation property. Recently, a Fourier pseudo-spectral numerical scheme was constructed in~\cite{ChenTernaryCH}, based on a non-standard convex-concave decomposition of the physical energy; the unique solvability and unconditional energy stability of the corresponding numerical scheme were established at a theoretical level. Besides the convex splitting approach, an invariant energy quadrant (IEQ) algorithm was designed in~\cite{Yang2017Numerical}. Therein a stability analysis was  proved for a numerically modified energy, not for the original energy functional.

By comparison, the ternary Cahn-Hilliard system~\eqref{MMC3term-equation} -- \eqref{mu_2} is much more difficult than the versions mentioned above. Due to the singular nature of the Flory-Huggins logarithmic free energy density, the positivity-preserving property has to be enforced for the numerical solution to make the scheme well-defined, which turns out to be a very challenging issue. For example, an application of either the invariant energy quadrant (IEQ)~\cite{guillen14}, scalar auxiliary variable (SAV)~\cite{shen18b, shen18a} or linear stabilization method~\cite{LiD2017, LiD2016a} would not be able to enforce such a property, due to the explicit treatment of the nonlinear singular terms. In fact, an extension of the singular energy functional (beyond the singular limit values) has to be made to define the corresponding linear numerical schemes. In addition to the difficulty associated with the positivity-preserving behavior of the numerical solution, the highly nonlinear and singular nature of the surface diffusion coefficients makes the system even more challenging, at both the analytic and numerical levels. In this paper, we propose and analyze a numerical scheme for the ternary MMC hydrogels system~\eqref{MMC3term-equation} -- \eqref{mu_2}, with three theoretical properties justified: positivity-preserving, unique solvability, and unconditional energy stability. This scheme is based on the convex-concave decomposition of the original energy functional, which turns out to be highly non-trivial even for the polynomial approximation one~\cite{ChenTernaryCH}, due to the multi phase variables involved. 
In order to apply the framework of such a decomposition for the terms involved with multi phase variables, a careful calculation of the Hessian matrix has to be performed. As analyzed in a recent article~\cite{chen19b} for the Flory-Huggins Cahn-Hilliard flow with constant surface diffusion coefficient, an implicit treatment of the nonlinear singular logarithmic term is necessary to theoretically justify its positivity-preserving property. In addition to the logarithmic terms, the chemical potential expansions with the nonlinear deGennes surface diffusion energy have to be implicitly updated in the numerical scheme, because of its convex nature in terms of all the phase variables. This leads to a highly nonlinear, highly singular numerical system, while the linear expansive term is treated explicitly. However, a more careful analysis reveals that, the convex and the singular natures of these implicit nonlinear parts prevent the numerical solutions approach the singular limit values of $0$ and $1$, so that the positivity-preserving property is available for all the phase variables. Such a theoretical justification is much more complicated than the one with constant surface diffusion coefficient case, as reported in~\cite{chen19b}, because of the mixed terms involved in the nonlinear surface diffusion part. With the positivity property justified, the unique solvability becomes a direct consequence of the convexity associated with the implicit terms in the numerical algorithm. An unconditional energy stability could also be derived using a convexity argument. 

The rest part of this paper is organized as follows. In \Cref{sec:existence of a convex-concave decomposition}, we show a convex-concave decomposition of the energy~\eqref{continuous energy1}. In \Cref{sec:numerical scheme},
we present a finite difference scheme based on a convex splitting of the energy functional.
In \Cref{sec:positivity-preserving property}, the unique solvability and the
positivity preserving property of the numerical solutions are analyzed. The unconditional
energy stability analysis is provided in \Cref{sec:unconditional energy stability}.
Various numerical results are presented in \Cref{sec:numerical results}. Finally, we
give some concluding remarks in \Cref{sec:conclusions}.
%

	
	\section{Existence of a convex-concave decomposition}
	\label{sec:existence of a convex-concave decomposition}
	\setcounter{equation}{0}
	
In this section, we will give a convex-concave decomposition of the energy~\eqref{continuous energy1}. The following preliminary results are needed.
	\begin{proposition}
	\label{convex splitting}
Define the functions
	\[
T_1(u,v):=\frac{v^2}{36 u}, \quad u\in (0,\infty), \quad v\in\mathbb{R};
	\]
	\[
T_2(u_1,u_2,v_1,v_2):=\frac{(v_1+v_2)^2}{36(1-u_1-u_2)},\quad u_1, u_2, v_1,v_2\in\mathbb{R};
	\]
	\[
T_3(u,v,w):=\frac{w^2}{36(u+v)}, \quad u,v,w\in\mathbb{R}.
	\]
	\begin{enumerate}
	\item	
$T_1(u,v)$ is convex in $(0, +\infty)\times \mathbb{R}$.
	\item
$T_2(u_1,u_2,v_1,v_2)$ is convex in $\mathbb{R}^4$, provided that $u_1+u_2<1$.
	\item
$T_3(u,v,w)$ is convex in $\mathbb{R}^3$, provided that $u+v>0$.
	\item
$S(u_1,u_2)$ is convex in the Gibbs Triangle, ${\mathcal G}$, defined as
	\[
{\mathcal G} := \left\{(u_1,u_2) \ | u_1,u_2 > 0, \  u_1+u_2 < 1 \right\}.	
	\]
	\item
$H(u_1,u_2)$ is concave, provided that $4\chi_{13}\chi_{23}-(\chi_{12}-\chi_{13}-\chi_{23})^2 > 0$.
	\end{enumerate}

	\end{proposition}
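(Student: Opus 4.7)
The plan is to establish each item by reducing to a direct Hessian calculation or by invoking convexity-preserving operations (sums, affine composition, perspective maps).

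For item 1, I would compute the Hessian of $T_1$ on $(0,\infty)\times\mathbb{R}$. A short calculation gives
\[
\nabla^2 T_1(u,v) = \frac{1}{18}\begin{pmatrix} v^2/u^3 & -v/u^2 \\ -v/u^2 & 1/u \end{pmatrix},
\]
whose trace is nonnegative and whose determinant is identically zero. Hence the Hessian is positive semidefinite, and $T_1$ is convex. (One can alternatively observe that $T_1$ is, up to a factor, the perspective of the convex function $v\mapsto v^2$.) Items 2 and 3 then follow immediately from item 1 by affine composition: the map $(u_1,u_2,v_1,v_2)\mapsto(1-u_1-u_2,\,v_1+v_2)$ is affine with image in $(0,\infty)\times\mathbb{R}$ whenever $u_1+u_2<1$, so $T_2(u_1,u_2,v_1,v_2)=T_1(1-u_1-u_2,\,v_1+v_2)$ is convex on the stated set. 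Analogously, $T_3(u,v,w)=T_1(u+v,w)$ is convex on $\{u+v>0\}$.

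For item 4, I would write $S$ as a sum of three convex pieces. The terms $\frac{u_1}{M_0}\ln\!\frac{\alpha u_1}{M_0}$ and $\frac{u_2}{N_0}\ln\!\frac{\beta u_2}{N_0}$ each split into a linear part and a one-variable $x\ln x$ part; the latter is convex on $(0,\infty)$ since its second derivative is $1/x>0$. The third term $(1-u_1-u_2)\ln(1-u_1-u_2)$ is the composition of the one-variable convex function $x\ln x$ with the affine map $(u_1,u_2)\mapsto 1-u_1-u_2$, and so is convex on $\mathcal{G}$. Summing three convex functions yields convexity of $S$ on the Gibbs triangle.

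For item 5, I would expand $H$ and isolate its quadratic part, since the remaining terms are affine and therefore do not affect concavity. The constant Hessian of the quadratic part is
\[
\nabla^2 H = \begin{pmatrix} -2\chi_{13} & \chi_{12}-\chi_{13}-\chi_{23} \\ \chi_{12}-\chi_{13}-\chi_{23} & -2\chi_{23}\end{pmatrix}.
\]
Its leading entry $-2\chi_{13}$ is negative and its determinant is precisely $4\chi_{13}\chi_{23}-(\chi_{12}-\chi_{13}-\chi_{23})^2$, which is positive by hypothesis. Hence $\nabla^2 H$ is negative definite, and $H$ is concave on $\mathbb{R}^2$.

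None of the individual steps is truly difficult; the only mild subtlety is that the Hessian of $T_1$ is only positive \emph{semi}-definite (it has a one-dimensional null direction along $(u,v)\mapsto(v/(2u),-1)$), so one must verify the diagonal entries and determinant separately rather than appeal to strict positivity. Once item 1 is handled, items 2 and 3 are automatic via affine substitution, and items 4 and 5 reduce to standard convexity of $x\ln x$ and a $2\times 2$ determinant check, respectively.
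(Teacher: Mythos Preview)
Your proof is correct and, for items 2, 3, and 4, more streamlined than the paper's. The paper proceeds uniformly by writing out the full Hessian of each function and checking that all principal minors have the appropriate sign; in particular, it recomputes a $4\times4$ Hessian for $T_2$ and a $3\times3$ Hessian for $T_3$ from scratch, and for $S$ it computes the $2\times2$ Hessian and its determinant explicitly. Your reduction of $T_2$ and $T_3$ to $T_1$ via affine precomposition, and your decomposition of $S$ into one-variable $x\ln x$ pieces plus affine parts, bypasses those calculations entirely and makes the structural reason for convexity transparent. Items 1 and 5 you handle essentially as the paper does. One minor slip: your parenthetical identification of the null direction of $\nabla^2 T_1$ is off---the null vector at $(u,v)$ is proportional to $(u,v)$ (as follows from the degree-one homogeneity of $T_1$), not to $(v/(2u),-1)$---but this does not affect the argument, which rests only on the trace--determinant check.
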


	\begin{proof}
(1) For $T_1(u,v)$, a careful calculation gives its Hessian matrix:
     \begin{equation}
\mathsf{H}_1 = \frac{1}{36}\left(
	\begin{array}{cc}
\frac{2v^2}{u^3} & -\frac{2v}{u^2}
	\\
-\frac{2v}{u^2} & \frac{2}{u}
     \end{array}
\right).\nonumber
     \end{equation}
The first-order principal minors of the matrix $\mathsf{H}_1$ are given by: $D_1 =\frac{v^2}{18 u^3}$, $D_2 = \frac{1}{18 u}$, which are both non-negative when $u \in (0, +\infty)$ and $v \in \mathbb{R}$. In addition, the second-order principal minor becomes $D_{12} = 0$.  Therefore, we conclude that the Hessian Matrix $\mathsf{H}_1$ is positive semi-definite and thus $T_1$ is convex in $(0, \infty)\times \mathbb{R}$.

(2) The Hessian matrix for $T_2(u_1,u_2,v_1,v_2)$ turns out to be 
	\begin{equation*}
\mathsf{H}_2 = \frac{1}{36}\left(
	\begin{array}{cccc}
\frac{2A^2}{B^3} & \frac{2A^2}{B^3} & \frac{2A}{B^2} & \frac{2A}{B^2}
	\\
\frac{2A^2}{B^3} & \frac{2A^2}{B^3} & \frac{2A}{B^2} & \frac{2A}{B^2}
	\\
\frac{2A}{B^2} & \frac{2A}{B^2} & \frac{2}{B} & \frac{2}{B}
	\\
\frac{2A}{B^2} & \frac{2A}{B^2} & \frac{2}{B} & \frac{2}{B}
	\\
     \end{array}
\right),  \quad A = v_1 + v_2 , \quad B = 1 - u_1 - u_2 .
     \end{equation*}
The first-order principal minors of the matrix $\mathsf{H}_2$ are $D_1 = D_2 = \frac{A^2}{18 B^3}$, $D_3 = D_4 = \frac{1}{18 B}$, which are positive values. Meanwhile, all other principal minors are equal to 0. In general, all these principal minors are non-negative when $u_1+u_2<1$. Therefore, we conclude that the Hessian Matrix $\mathsf{H}_2$ is positive semi-definite and thus $T_2$ is convex when $u_1+u_2<1$.

(3) For $T_3 (u,v,w)$, the Hessian matrix has the following form:
\begin{equation}
     \mathsf{H}_3 =
     \frac{1}{36}\left(
     \begin{array}{ccc}
     \frac{2w^2}{(u+v)^3} & \frac{2w^2}{(u+v)^3}&\frac{-2w}{(u+v)^2}\\
     \frac{2w^2}{(u+v)^3} & \frac{2w^2}{(u+v)^3}&\frac{-2w}{(u+v)^2}\\
     \frac{-2w}{(u+v)^2} & \frac{-2w}{(u+v)^2}&\frac{2}{u+v}\\
     \end{array}
     \right).
     \end{equation}
The first-order principal minors of the matrix $\mathsf{H}_3$ are $D_1 = \frac{w^2}{18(u+v)^3}$,  $D_2 = \frac{w^2}{18(u+v)^3}$, $D_3=\frac{1}{18(u+v)}$, which are positive values. Again, all other principal minors are equal to 0. All these principal minors are non-negative when $u+v>0$. Then we conclude that the Hessian Matrix $\mathsf{H}_3$ is positive semi-definite and thus $T_3$ is convex when $u+v>0$.

(4) For $S(u_1,u_2)$, the Hessian matrix is
     \begin{equation}
\mathsf{H}_S = \left(
    \begin{array}{cc}
\frac{1}{M_0 u_1} + \frac{1}{1-u_1-u_2} & \frac{1}{1-u_1-u_2}
	\\
\frac{1}{1-u_1-u_2} &\frac{1}{N_0 u_2} + \frac{1}{1-u_1-u_2}
    \end{array}
\right).
    \end{equation}
The first-order principal minors of the matrix $\mathsf{H}_S$ are given by $D_1 = \frac{1}{M_0 u_1} + \frac{1}{1-u_1-u_2}$, $D_2 = \frac{1}{N_0 u_2} + \frac{1}{1-u_1-u_2}$, which are positive values. The second-order principal minor is determined as
     \[
D_{12} = \det(\mathsf{H}_S)= \frac{1}{M_0 N_0 u_1u_2 } + \frac{1}{M_0 u_1} + \frac{1}{N_0 u_2} +\frac{1}{1-u_1-u_2}.
     \]
All these principal minors are positive when $u_1, u_2 \in (0, +\infty)$ and $u_1+u_2<1$. Consequently, the Hessian matrix $\mathsf{H}_S$ is positive-definite and thus $S$ is convex in the Gibbs triangle $\mathcal{G}$.

(5) The Hessian matrix of $H (u_1, u_2)$ becomes 
	\begin{equation}
\mathsf{H}_H= \left(
	\begin{array}{cc}
-2\chi_{13}  &  \chi_{12}-\chi_{13}-\chi_{23}
	\\
\chi_{12}-\chi_{13}-\chi_{23} &  -2\chi_{23}
	\\
	\end{array}
\right).
	\end{equation}
The first-order principal minors of $\mathsf{H}_H$ are given by $D_1 = -2\chi_{13} < 0$, $D_2 = -2\chi_{13} < 0$. In addition, the second-order principal minor of $\mathsf{H}_H$ becomes
     \[
D_{12} = \det(\mathsf{H}_H)= 4\chi_{13}\chi_{23}-(\chi_{12}-\chi_{13}-\chi_{23})^2 > 0.
     \]
Therefore, the Hessian matrix $\mathsf{H}_H$ is negative-definite and thus $H$ is concave when $4\chi_{13}\chi_{23}-(\chi_{12}-\chi_{13}-\chi_{23})^2 > 0$.
\end{proof}

	\begin{lemma}[Existence of a convex-concave decomposition]
	\label{lemma-1}
Assume that $\phi_1,\phi_2: \Omega \rightarrow (0, 1)$ are periodic and sufficiently
regular, with point values in the Gibbs Triangle, $\mathcal{G}$. The functionals
     \begin{align}
G_c(\phi_1,\phi_2) &:= \int_{\Omega} S(\phi_1,\phi_2)+ \frac{\varepsilon_1^2|\nabla\phi_1|^2}{36\phi_1} + \frac{\varepsilon_2^2|\nabla\phi_2|^2}{36\phi_2}+ \frac{\varepsilon_3^2|\nabla(1-\phi_1-\phi_2)|^2}{36(1-\phi_1-\phi_2)} d {\bf x},
	\label{convex-energy-c}
	\\
G_e(\phi_1,\phi_2) & := -\int_{\Omega} H(\phi_1,\phi_2) d {\bf x} .
 \label{convex-energy-e}
     \end{align}
are convex. Therefore, $G(\phi_1,\phi_2)=G_c(\phi_1,\phi_2)-G_e(\phi_1,\phi_2)$ is a convex-concave decomposition of the energy.
	\end{lemma}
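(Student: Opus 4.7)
The plan is to deduce both convexities directly from the pointwise algebraic facts established in \Cref{convex splitting}, together with the general principle that integration in the spatial variable preserves convexity. Concretely, I will view each integrand as a function of the finite collection of pointwise data $(\phi_1,\phi_2,\nabla\phi_1,\nabla\phi_2)$, prove its convexity on the appropriate open region, and then lift convexity from the integrand to the integral by linearity of integration and the definition of convexity applied along convex combinations $\phi^\lambda = \lambda \phi + (1-\lambda)\tilde\phi$.

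For $G_c$, I would handle the four summands in the integrand separately. The entropy piece $S(\phi_1,\phi_2)$ is convex on the Gibbs triangle $\mathcal G$ by \Cref{convex splitting}(4), so under the hypothesis that the point values of $(\phi_1,\phi_2)$ lie in $\mathcal G$, pointwise convexity is immediate. For the two self-diffusion terms, I would split $|\nabla\phi_i|^2$ into its components $(\partial_x\phi_i)^2+(\partial_y\phi_i)^2$ and write each contribution as $T_1(\phi_i,\partial_x\phi_i)+T_1(\phi_i,\partial_y\phi_i)$. Since $\phi_i>0$, part (1) of \Cref{convex splitting} applies to each summand, and a sum of convex functions of the joint variable $(\phi_i,\partial_x\phi_i,\partial_y\phi_i)$ is convex. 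For the cross deGennes term, the identity $\nabla(1-\phi_1-\phi_2)=-(\nabla\phi_1+\nabla\phi_2)$ lets me rewrite the integrand as $T_2(\phi_1,\phi_2,\partial_x\phi_1,\partial_x\phi_2)+T_2(\phi_1,\phi_2,\partial_y\phi_1,\partial_y\phi_2)$, and part (2) of \Cref{convex splitting} gives convexity in $(\phi_1,\phi_2,\partial_x\phi_1,\partial_x\phi_2)$ (resp.\ the $y$-components) on the region $\phi_1+\phi_2<1$, which again is the standing assumption on the Gibbs triangle. Summing these four convex pointwise contributions and integrating yields convexity of $G_c$.

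The functional $G_e$ is handled in one step: \Cref{convex splitting}(5) says $H(\phi_1,\phi_2)$ is concave under the assumed inequality on the Huggins parameters, so $-H$ is convex pointwise in $(\phi_1,\phi_2)$, and therefore $G_e=\int_\Omega(-H)\,d{\bf x}$ is convex. Finally, comparing with \eqref{continuous energy1} one sees $G=G_c+\int_\Omega H\,d{\bf x}=G_c-G_e$, so this is the advertised convex-concave decomposition.

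I do not anticipate a serious obstacle, because \Cref{convex splitting} has already done the computational work. The only thing to be careful about is that convexity of an integrand as a function of the pointwise data $(\phi_1,\phi_2,\nabla\phi_1,\nabla\phi_2)$ must be genuinely jointly convex in \emph{all} of these variables (including the coupling between $\phi_i$ and $\nabla\phi_i$), which is precisely what parts (1) and (2) of \Cref{convex splitting} provide — the Hessians there are taken over the full joint argument, not over the gradient alone. Once that observation is in hand, passing from pointwise joint convexity of the integrand to convexity of the functional along a convex combination of admissible pairs $(\phi_1,\phi_2)$ is a one-line consequence of monotonicity of the integral.
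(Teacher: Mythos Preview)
Your proposal is correct and follows essentially the same approach as the paper: both arguments decompose the integrand of $G_c$ pointwise into $S$, two copies of $T_1$ per component (one for each spatial derivative), and two copies of $T_2$, invoke the joint convexity established in \Cref{convex splitting}, and then pass convexity through the integral; the treatment of $G_e$ via part (5) is likewise identical.
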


	\begin{proof}
The fact that $G(\phi_1,\phi_2)=G_c(\phi_1,\phi_2)-G_e(\phi_1,\phi_2)$ is obvious. Suppose that
	\[
(u_1,u_2) \in {\mathcal G} = \left\{(u_1,u_2) \ | u_1,u_2 > 0, \  u_1+u_2 < 1 \right\}
	\]
and set $\vec{u} : =(u_1,u_2,u_3,u_4,u_5,u_6)\in \mathcal{G}\times\mathbb
{R}^4$. Define
      \begin{align*}
e_c(\vec{u}) & : = S(u_1,u_2) + \varepsilon_1^2 T_1(u_1,u_3) + \varepsilon_1^2 T_1(u_1,u_5) + \varepsilon_2^2 T_1(u_2,u_4)
	\\
& \quad + \varepsilon_2^2 T_1(u_2,u_6) + \varepsilon_3^2 T_2(u_1,u_2,u_3,u_4) + \varepsilon_3^2 T_2(u_1,u_2,u_5,u_6),
	\\
e_e(\vec{u}) &:= -H(u_1,u_2).
      \end{align*}
\Cref{convex splitting} suggests that $e_c$ and $e_e$ are convex in $\mathcal{G}\times \mathbb {R}^4$. Therefore, we have the following inequality according to the definition of a convex function:
$\forall \, \lambda \in (0,1),  \vec{u},\vec{v} \in \mathcal{G}\times\mathbb {R}^4$,
     \begin{equation}\label{e_c}
e_c(\lambda \vec{u}+(1-\lambda)\vec{v})\leq \lambda e_c(\vec{u})+(1-\lambda)e_c(\vec{v}).
     \end{equation}
It is noticed that
     \begin{align*}
G_c(\phi_1,\phi_2) &= \int_{\Omega}
e_c(\phi_1,\phi_2,{\phi_1}_{x},{\phi_2}_{x},{\phi_1}_{y},{\phi_2}_{y}) d {\bf x},
	\\
G_e(\phi_1,\phi_2) & = \int_{\Omega} e_e(\phi_1,\phi_2) d {\bf x}.
     \end{align*}
Setting $\vec{u} :=(\phi_1,\phi_2,{\phi_1}_{x},{\phi_2}_{x},{\phi_1}_{y},{\phi_2}_{y})$ and $\vec{v}=(\psi_1,\psi_2,{\psi_1}_{x},{\psi_2}_{x},{\psi_1}_{y},{\psi_2}_{y})$, and integrating inequality $\eqref{e_c}$ leads to
     \[
G_c(\lambda \phi_1 +(1-\lambda)\psi_1,\lambda \phi_2 +(1-\lambda)\psi_2)\leq \lambda
G_c(\phi_1,\phi_2)+(1-\lambda)G_c(\psi_1,\psi_2),
     \]
which indicates that $G_c(\phi_1,\phi_2)$ is a convex functional of $\phi_1$ and $\phi_2$. Using a similar argument, we see that $G_e(\phi_1,\phi_2)$ is also convex.
	\end{proof}

The following estimate is the foundation of the energy stability. The proof, which is practically the same as that in~\cite{wise09a}, is independent on the specific form of $G(\phi_1,\phi_2)$.

	\begin{lemma}
Suppose that $\Omega=(0,L_x)\times(0,L_y)$ and $(\phi_1,\phi_2),(\psi_1,\psi_2):\Omega \rightarrow \mathcal{G}$ are periodic and sufficiently regular. Consider the canonical convex splitting of the energy $G(\phi_1,\phi_2)$ in \eqref{continuous energy1} into $G=G_c-G_e$ given in \eqref{convex-energy-c} -- \eqref{convex-energy-e}. Then
     \begin{align}
G(\vec{\phi})-G(\vec{\psi}) & \leq (\delta_{\phi_1} G_c(\vec{\phi})-\delta_{\phi_1} G_e(\vec{\psi}),\phi_1-\psi_1)_{L^2}
	\label{Semi-energy-inequality}
	\\
& \quad +(\delta_{\phi_2} G_c(\vec{\phi})-\delta_{\phi_2} G_e(\vec{\psi}),\phi_2-\psi_2)_{L^2},
	\nonumber
	\end{align}
where $\vec{\phi}=(\phi_1,\phi_2)$, $\vec{\psi}=(\psi_1,\psi_2)$.
\end{lemma}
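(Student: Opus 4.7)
The plan is the standard convex-splitting argument: by \Cref{lemma-1}, both $G_c$ and $G_e$ are convex functionals, so I will invoke the first-order tangent-plane inequality for convex functionals twice — once at $\vec\phi$ for $G_c$ (giving an upper bound on $G_c(\vec\phi)-G_c(\vec\psi)$) and once at $\vec\psi$ for $G_e$ (giving a lower bound on $G_e(\vec\phi)-G_e(\vec\psi)$) — and then subtract. Since $G(\vec\phi)-G(\vec\psi)=[G_c(\vec\phi)-G_c(\vec\psi)]-[G_e(\vec\phi)-G_e(\vec\psi)]$, combining the two bounds and collecting the $L^2$ inner products by linearity will produce \eqref{Semi-energy-inequality} directly.

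To make the two tangent inequalities rigorous, the key step is the usual auxiliary-scalar trick. For any convex $F\in\{G_c,G_e\}$, introduce
\[
h_F(t) := F\bigl(\vec\psi + t(\vec\phi - \vec\psi)\bigr), \qquad t \in [0,1].
\]
The Gibbs triangle $\mathcal{G}$ is a convex set and both $\vec\phi$ and $\vec\psi$ take values there, so the segment $\vec w_t := \vec\psi + t(\vec\phi - \vec\psi)$ stays in $\mathcal{G}$ pointwise for every $t\in[0,1]$. Consequently $h_F$ is well-defined and, by convexity of $F$, convex as a scalar function on $[0,1]$. Any convex scalar function on $[0,1]$ satisfies $h(1)-h(0)\le h'(1)$ and $h(1)-h(0)\ge h'(0)$. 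Computing $h_F'(t)$ via the chain rule and the definition of the variational derivatives — with any boundary contributions from integration by parts vanishing under the assumed periodic boundary conditions — yields
\[
h_F'(t) = \bigl(\delta_{\phi_1} F(\vec w_t),\,\phi_1-\psi_1\bigr)_{L^2} + \bigl(\delta_{\phi_2} F(\vec w_t),\,\phi_2-\psi_2\bigr)_{L^2}.
\]
Evaluating at $t=1$ with $F=G_c$ gives the needed upper bound, and evaluating at $t=0$ with $F=G_e$ gives the lower bound; subtraction closes the argument.

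The only step requiring real care — though not real depth — is computing $h_{G_c}'$, because of the nonlinear deGennes-type contributions $\varepsilon_i^2|\nabla\phi_i|^2/(36\phi_i)$ and the mixed term $\varepsilon_3^2|\nabla(1-\phi_1-\phi_2)|^2/(36(1-\phi_1-\phi_2))$. Differentiation under the integral and integration by parts on the gradient pieces must reproduce the $\delta_{\phi_i} G_c$ that appear in the chemical potentials \eqref{mu_1}--\eqref{mu_2}; periodicity eliminates boundary terms, while strict positivity of each denominator (guaranteed by $\vec w_t \in \mathcal{G}$ pointwise) preserves integrability along the entire path. Once this chain-rule computation is in place, the rest of the proof is entirely structural and does not use any specific feature of the MMC free energy, matching the comment preceding the lemma and paralleling the analogous calculation in~\cite{wise09a}.
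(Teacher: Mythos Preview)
Your proposal is correct and follows essentially the same convex-splitting tangent-plane argument as the paper: both apply the first-order convexity inequality to $G_c$ at $\vec\phi$ and to $G_e$ at $\vec\psi$, then subtract. The only cosmetic difference is that the paper works pointwise with the integrand $e_c$ (applying $e_c(\vec v)-e_c(\vec u)\ge \nabla e_c(\vec u)\cdot(\vec v-\vec u)$ and then integrating by parts), whereas you work directly at the functional level via the scalar auxiliary $h_F(t)$ --- which is in fact exactly the device the paper uses later in the fully discrete analogue.
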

\begin{proof}
Set
	\[
G_c(\vec{\phi}) = \int_{\Omega} e_c(\phi_1,\phi_2,{\phi_1}_{x},{\phi_2}_{x},{\phi_1}_{y},{\phi_2}_{y}) d
{\bf x}.
	\]
If $(\phi_1,\phi_2)\in\mathcal{G}$, \Cref{lemma-1} ensures the convexity of $e_c(\vec{u})$ in $\mathcal{G}\times \mathbb{R}^4$. We have the equivalent statement
     \[
e_c(\vec{v})-e_c(\vec{u}) \geq \nabla_{\vec{u}} e_c(\vec{u})\cdot (\vec{v}-\vec{u}),
     \]
for any $\vec{u},\vec{v}\in \mathcal{G}\times \mathbb {R}^4$.

Now setting
	\[
\vec{u}=(\phi_1,\phi_2,{\phi_1}_{x},{\phi_2}_{x},{\phi_1}_{y},{\phi_2}_{y}), \quad \vec{v}=(\psi_1,\psi_2,{\psi_1}_{x},{\psi_2}_{x},{\psi_1}_{y},{\psi_2}_{y}) , 
	\]
and integrating-by-parts, we get the inequality
     \begin{equation}
G_c(\vec{\phi})-G_c(\vec{\psi}) \geq (\delta_{\phi_1} G_c(\vec{\psi}),\phi_1-\psi_1)_{L^2}
+(\delta_{\phi_2} G_c(\vec{\psi}),\phi_2-\psi_2)_{L^2}.\label{G_c_inequality}
     \end{equation}
By a similar analysis for $G_e$, we see that 
     \begin{equation}
G_e(\vec{\psi})-G_e(\vec{\phi}) \geq (\delta_{\phi_1} G_e(\vec{\phi}),\psi_1-\phi_1)_{L^2}
 +(\delta_{\phi_2} G_e(\vec{\phi}),\psi_2-\phi_2)_{L^2}.\label{G_e_inequality}
     \end{equation}
Adding \eqref{G_c_inequality} and \eqref{G_e_inequality} yields
	\begin{align*}
& \hspace{-0.25in}G(\vec{\phi})-G(\vec{\psi})
	\\
& =\left(G_c(\vec{\phi})-G_c(\vec{\psi})\right)-\left(G_e(\vec{\phi})-G_e(\vec{\psi})\right)
	\\
& \leq (\delta_{\phi_1} G_c(\vec{\phi}),\phi_1-\psi_1)_{L^2} +(\delta_{\phi_2} G_c(\vec{\phi}),\phi_2-\psi_2)_{L^2}
	 \\
& \quad -\left((\delta_{\phi_1} G_e(\vec{\psi}),\phi_1-\psi_1)_{L^2} +(\delta_{\phi_2} G_e(\vec{\psi}),\phi_2-\psi_2)_{L^2}\right)
	\\
& =(\delta_{\phi_1} G_c(\vec{\phi})-\delta_{\phi_1}
G_e(\vec{\psi}),\phi_1-\psi_1)_{L^2}+(\delta_{\phi_2} G_c(\vec{\phi})-\delta_{\phi_2}
G_e(\vec{\psi}),\phi_2-\psi_2)_{L^2}.
     \end{align*}
\end{proof}
	\section{Numerical scheme}
	\label{sec:numerical scheme}
	\setcounter{equation}{0}

	\subsection{Discretization of two-dimensional space}

In the spatial discretization, the centered difference approximation is applied. Some basic notations have to be recalled. We use the notations and results for some discrete functions and operators from~\cite{wise10, wise09a}. Let $\Omega = (0,L_x)\times(0,L_y)$, and we assume $L_x =L_y =: L > 0$ for simplicity of presentation. Let $N\in\mathbb{N}$ be given, and define the grid spacing $h := \nicefrac{L}{N}$.  We also assume -- but only for simplicity of notation, ultimately -- that the mesh spacing in the $x$ and $y$-directions are the same. The following two uniform, infinite grids with grid spacing $h>0$, are introduced: 
	\[
E := \{ p_{i+\hf} \ |\ i\in {\mathbb{Z}}\}, \quad C := \{ p_i \ |\ i\in {\mathbb{Z}}\},
	\]
where $p_i = p(i) := (i-\hf)\cdot h$. Consider the following 2-D discrete $N^2$-periodic function spaces:
\begin{eqnarray*}
	\begin{aligned}
{\mathcal C}_{\rm per} &:= \left\{\nu: C\times C
\rightarrow {\mathbb{R}}\ \middle| \ \nu_{i,j} = \nu_{i+\alpha N,j+\beta N}, \ \forall \,
i,j,\alpha,\beta\in \mathbb{Z} \right\},
	\\
{\mathcal E}^{\rm x}_{\rm per} &:=\left\{\nu: E\times C\rightarrow {\mathbb{R}}\ \middle| \
\nu_{i+\frac12,j}= \nu_{i+\frac12+\alpha N,j+\beta N}, \ \forall \, i,j,\alpha,\beta\in
\mathbb{Z}\right\} .
	\end{aligned}
	\end{eqnarray*}
Here we are using the identification $\nu_{i,j} = \nu(p_i,p_j)$, \emph{et cetera}. The space ${\mathcal E}^{\rm y}_{\rm per}$ is analogously defined. The function of ${\mathcal C}_{\rm per}$ is called {\emph{cell-centered function}}, and the function of ${\mathcal E}^{\rm x}_{\rm per}$ and ${\mathcal E}^{\rm y}_{\rm per}$,  is called {\emph{edge-centered function}}.  We also define the mean zero space
    \[
\mathring{\mathcal C}_{\rm per}:=\left\{\nu\in {\mathcal C}_{\rm per} \ \middle| \ 0 = \overline{\nu} :=  \frac{h^2}{L^2} \sum_{i,j=1}^N \nu_{i,j} \right\} .
	\]
In addition, $\vec{\mathcal{E}}_{\rm per}$ is defined as $\vec{\mathcal{E}}_{\rm per} := {\mathcal E}^{\rm x}_{\rm per}\times {\mathcal E}^{\rm y}_{\rm per}$. We now introduce the difference and average operators on the spaces:
	\begin{eqnarray*}
&& A_x \nu_{i+\hf,j} := \frac{1}{2}\left(\nu_{i+1,j} + \nu_{i,j} \right), \quad D_x
\nu_{i+\hf,j} :=
 \frac{1}{h}\left(\nu_{i+1,j} - \nu_{i,j} \right),\\
&& A_y \nu_{i,j+\hf} := \frac{1}{2}\left(\nu_{i,j+1} + \nu_{i,j} \right), \quad D_y
\nu_{i,j+\hf} := \frac{1}{h}\left(\nu_{i,j+1} - \nu_{i,j} \right) ,
	\end{eqnarray*}
with $A_x,\, D_x: {\mathcal C}_{\rm per}\rightarrow{\mathcal E}_{\rm per}^{\rm x}$, $A_y,\, D_y: {\mathcal C}_{\rm per}\rightarrow{\mathcal E}_{\rm per}^{\rm y}$. Likewise,
\begin{eqnarray*}
&& a_x \nu_{i, j} := \frac{1}{2}\left(\nu_{i+\hf, j} + \nu_{i-\hf, j} \right),	 \quad d_x
\nu_{i,
 j} := \frac{1}{h}\left(\nu_{i+\hf, j} - \nu_{i-\hf, j} \right),\\
&& a_y \nu_{i,j} := \frac{1}{2}\left(\nu_{i,j+\hf} + \nu_{i,j-\hf} \right),	 \quad d_y
\nu_{i,j} := \frac{1}{h}\left(\nu_{i,j+\hf} - \nu_{i,j-\hf} \right),
	\end{eqnarray*}
with $a_x,\, d_x : {\mathcal E}_{\rm per}^{\rm x}\rightarrow{\mathcal C}_{\rm per}$, $a_y,\, d_y : {\mathcal E}_{\rm per}^{\rm y}\rightarrow{\mathcal C}_{\rm per}$. The discrete gradient operator $\nabh:{\mathcal C}_{\rm per}\rightarrow \vec{\mathcal{E}}_{\rm per}$ is given by
    \[
\nabh\nu_{i,j} =\left( D_x\nu_{i+\hf, j},  D_y\nu_{i, j+\hf}\right) ,
	\]
and the discrete divergence $\nabh\cdot :\vec{\mathcal{E}}_{\rm per} \rightarrow {\mathcal C}_{\rm per}$ is defined via
	\[
\nabh\cdot\vec{f}_{i,j} = d_x f^x_{i,j}	+ d_y f^y_{i,j},
	\]
where $\vec{f} = (f^x,f^y)\in \vec{\mathcal{E}}_{\rm per}$. The standard 2-D discrete Laplacian, $\Delta_h : {\mathcal C}_{\rm per}\rightarrow{\mathcal C}_{\rm per}$, becomes
    \begin{align*}
\Delta_h \nu_{i,j} := &  d_x(D_x \nu)_{i,j} + d_y(D_y \nu)_{i,j}
	\\
= & \ \frac{1}{h^2}\left( \nu_{i+1,j}+\nu_{i-1,j}+\nu_{i,j+1}+\nu_{i,j-1} -
4\nu_{i,j}\right).
	\end{align*}
More generally, if $\mathcal{D}$ is a periodic \emph{scalar} function that is defined at all of the edge center points and $\vec{f}\in\vec{\mathcal{E}}_{\rm per}$, then $\mathcal{D}\vec{f}\in\vec{\mathcal{E}}_{\rm per}$, assuming point-wise multiplication, and we may define
	\[
\nabla_h\cdot \big(\mathcal{D} \vec{f} \big)_{i,j} = d_x\left(\mathcal{D}f^x\right)_{i,j}  +
 d_y\left(\mathcal{D}f^y\right)_{i,j}  .
	\]
Specifically, if $\nu\in \mathcal{C}_{\rm per}$, then $\nabla_h \cdot\left(\mathcal{D}
\nabla_h  \ \
\right):\mathcal{C}_{\rm per} \rightarrow \mathcal{C}_{\rm per}$ is defined point-wise via
	\[
\nabla_h\cdot \big(\mathcal{D} \nabla_h \nu \big)_{i,j} =
d_x\left(\mathcal{D}D_x\nu\right)_{i,j}  +
 d_y\left(\mathcal{D} D_y\nu\right)_{i,j} .
	\]
Now we are ready to define the following grid inner products:
	\begin{equation*}
	\begin{aligned}
\ciptwo{\nu}{\xi} &:= h^2\sum_{i,j=1}^N  \nu_{i,j}\, \xi_{i,j},\, \nu,\, \xi\in {\mathcal
C}_{\rm per},\,
&\eipx{\nu}{\xi} := \ciptwo{a_x(\nu\xi)}{1} ,\, \nu,\, \xi\in{\mathcal E}^{\rm x}_{\rm per},
\\
\eipy{\nu}{\xi} &:= \ciptwo{a_y(\nu\xi)}{1} ,\, \nu,\, \xi\in{\mathcal E}^{\rm y}_{\rm per},
	\end{aligned}
	\end{equation*}
	\[
\eipvec{\vec{f}_1}{\vec{f}_2} : = \eipx{f_1^x}{f_2^x}	+ \eipy{f_1^y}{f_2^y} , \quad
\vec{f}_i =
 (f_i^x,f_i^y) \in \vec{\mathcal{E}}_{\rm per}, \ i = 1,2.
	\]
In turn, the following norms could be appropriately introduced for cell-centered functions for $\nu\in {\mathcal C}_{\rm per}$: $\nrm{\nu}_p^p := \ciptwo{|\nu|^p}{1}$, for $1\le p< \infty$, and $\nrm{\nu}_\infty := \max_{1\le i,j\le N}\left|\nu_{i,j}\right|$. We also define norms of the gradient (for $\nu\in{\mathcal C}_{\rm per}$) as follows: 
	\[
\nrm{ \nabla_h \nu}_2^2 : = \eipvec{\nabh \nu }{ \nabh \nu } = \eipx{D_x\nu}{D_x\nu} +
 \eipy{D_y\nu}{D_y\nu},
	\]
and, more generally, 
	\begin{equation*}
\nrm{\nabla_h \nu}_p := \left( \eipx{|D_x\nu|^p}{1} + \eipy{|D_y\nu|^p}{1} \right)^{\frac1p}
, \quad 1 \le p < \infty . 
	\end{equation*}
Higher order norms can be similarly formulated. For example,
	\[
\nrm{\nu}_{H_h^1}^2 : =  \nrm{\nu}_2^2+ \nrm{ \nabla_h \nu}_2^2, \quad \nrm{\nu}_{H_h^2}^2 :
=
\nrm{\nu}_{H_h^1}^2  + \nrm{ \Delta_h \nu}_2^2.
	\]
\begin{lemma}
	\label{lemma1}
Let $\mathcal{D}$ be an arbitrary periodic, scalar function defined on all of the edge-center points. For any $\psi, \nu \in {\mathcal C}_{\rm per}$ and any $\vec{f}\in\vec{\mathcal{E}}_{\rm per}$, the following summation by parts formulas are valid:
	\begin{equation}
\ciptwo{\psi}{\nabla_h\cdot\vec{f}} = - \eipvec{\nabla_h \psi}{ \vec{f}}, \quad
\ciptwo{\psi}{\nabla_h\cdot \left(\mathcal{D}\nabla_h\nu\right)} = - \eipvec{\nabla_h \psi
}{
\mathcal{D}\nabla_h\nu} .
\label{lemma 1-0}
	\end{equation}
	\end{lemma}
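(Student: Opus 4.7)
The plan is to obtain the second identity as an immediate corollary of the first by setting $\vec{f} := \mathcal{D}\nabla_h \nu$. This substitution is legitimate because $\mathcal{D}$ is periodic at the edge centers and the two components of $\nabla_h \nu$ lie in ${\mathcal E}^{\rm x}_{\rm per}$ and ${\mathcal E}^{\rm y}_{\rm per}$ respectively, so $\mathcal{D}\nabla_h \nu \in \vec{\mathcal{E}}_{\rm per}$ under the pointwise multiplication convention. Hence the real content is the first identity, which I would prove by unwinding the definitions of $\ciptwo{\cdot}{\cdot}$ and $\eipvec{\cdot}{\cdot}$ and performing a discrete index shift that exploits the $N^2$-periodicity.

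Splitting $\nabla_h \cdot \vec{f} = d_x f^x + d_y f^y$ and $\eipvec{\nabh \psi}{\vec{f}} = \eipx{D_x\psi}{f^x} + \eipy{D_y\psi}{f^y}$, I only need to verify $\ciptwo{\psi}{d_x f^x} = -\eipx{D_x \psi}{f^x}$; the $y$-identity is completely symmetric. Unwinding the left-hand side yields $h\sum_{i,j=1}^N \psi_{i,j}\bigl(f^x_{i+\hf,j} - f^x_{i-\hf,j}\bigr)$, and the key step is a reindexing $i \mapsto i+1$ in the second half of the sum, which is valid because $\psi \in \mathcal{C}_{\rm per}$ and $f^x \in \mathcal{E}^{\rm x}_{\rm per}$ are both $N$-periodic in $i$. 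After combining, the expression collapses into $-h^2 \sum_{i,j=1}^N (D_x\psi)_{i+\hf,j}\, f^x_{i+\hf,j}$.

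To match this with $-\eipx{D_x\psi}{f^x}$, I would unwind the right-hand side: $\eipx{D_x\psi}{f^x} = \ciptwo{a_x(D_x\psi \cdot f^x)}{1} = \tfrac{h^2}{2}\sum_{i,j}\bigl((D_x\psi\cdot f^x)_{i+\hf,j} + (D_x\psi\cdot f^x)_{i-\hf,j}\bigr)$, and one more periodic shift identifies the two half-edge sums, producing exactly $h^2\sum_{i,j}(D_x\psi)_{i+\hf,j}\, f^x_{i+\hf,j}$. The main (and only) obstacle is the purely mechanical bookkeeping with half-integer indices: one has to verify that both reindexings — the Abel-type shift inside $\ciptwo{\cdot}{\cdot}$ and the collapse of $a_x$ inside $\eipx{\cdot}{\cdot}$ — preserve the index range $\{1,\ldots,N\}$, which they do by periodicity. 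No analytic input is required beyond this; the result is essentially discrete Abel summation on a torus.
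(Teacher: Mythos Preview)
Your proposal is correct. The paper itself does not supply a proof of this lemma; it is stated as a standard fact, with the notation and setup borrowed from the cited references~\cite{wise09a, wise10}. Your argument --- reducing the second identity to the first via $\vec{f}=\mathcal{D}\nabla_h\nu$, splitting into $x$- and $y$-components, and carrying out the discrete Abel shift using the $N$-periodicity of both $\psi$ and $f^x$ --- is exactly the standard route, and is what one finds in those references. There is nothing to add.
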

To facilitate the analysis below, we need to introduce a discrete analogue of the space $H_{per}^{-1}\left(\Omega\right)$, as outlined in~\cite{wang11a}. Suppose that $\mathcal{D}$ is a positive, periodic scalar function defined at edge-center points. For any $\phi\in{\mathcal C}_{\rm per}$, there exists a unique $\psi\in\mathring{\mathcal C}_{\rm per}$ that solves
\begin{eqnarray}
\mathcal{L}_{\mathcal{D}}(\psi):= - \nabla_h \cdot\left(\mathcal{D}\nabla_h \psi\right) =
\phi - \overline{\phi} ,
	\end{eqnarray}
where $\overline{\phi} := |\Omega|^{-1}\ciptwo{\phi}{1}$. We equip this space with a bilinear form: for any $\phi_1,\, \phi_2\in \mathring{\mathcal C}_{\rm per}$, define
\begin{equation}
\cipgen{ \phi_1 }{ \phi_2 }{\mathcal{L}_{\mathcal{D}}^{-1}} := \eipvec{\mathcal{D}\nabla_h \psi_1 }{\nabla_h \psi_2 },
	\end{equation}
where $\psi_i\in\mathring{\mathcal C}_{\rm per}$ is the unique solution to
	\begin{equation}
\mathcal{L}_{\mathcal{D}}(\psi_i):= - \nabla_h \cdot\left(\mathcal{D}\nabla_h \psi_i\right)  =
\phi_i, \quad i = 1, 2.
	\end{equation}
The following identity~\cite{wang11a} is easy to prove via summation-by-parts:
	\begin{equation}
\cipgen{\phi_1 }{ \phi_2 }{\mathcal{L}_{\mathcal{D}}^{-1}} = \ciptwo{\phi_1}{
\mathcal{L}_{\mathcal{D}}^{-1} (\phi_2) } = \ciptwo{ \mathcal{L}_{\mathcal{D}}^{-1} (\phi_1)
}{\phi_2 },
	\end{equation}
and since $\mathcal{L}_{\mathcal{D}}$ is symmetric positive definite, $\cipgen{ \ \cdot \ }{\ \cdot \ }{\mathcal{L}_{\mathcal{D}}^{-1}}$ is an inner product on $\mathring{\mathcal C}_{\rm per}$~\cite{wang11a}. When $\mathcal{D}\equiv 1$, we drop the subscript and write $\mathcal{L}_{1} = \mathcal{L} = -\Delta_h$, and introduce the notation $\cipgen{ \ \cdot \ }{\ \cdot \ }{\mathcal{L}_{\mathcal{D}}^{-1}} =: \cipgen{ \ \cdot \ }{\ \cdot \ }{-1,h}$. In the general setting, the norm associated to this inner product is denoted $\nrm{\phi}_{\mathcal{L}_{\mathcal{D}}^{-1}} := \sqrt{\cipgen{\phi }{ \phi }{\mathcal{L}_{\mathcal{D}}^{-1}}}$, for all $\phi \in \mathring{\mathcal C}_{\rm per}$, but, if $\mathcal{D}\equiv 1$, we write $\nrm{\, \cdot \, }_{\mathcal{L}_{\mathcal{D}}^{-1}} =: \nrm{\, \cdot \, }_{-1,h}$.

\subsection{A convex-concave decomposition of the discrete energy}

Let us define
	\[
\vec{\mathcal{C}}_{\rm per}^{\mathcal{G}} := \left\{(\phi_1,\phi_2) \in \mathcal{C}_{\rm per}\times \mathcal{C}_{\rm per} \ \middle| \ ({\phi_1}_{i,j},{\phi_2}_{i,j}) \in\mathcal{G} , \quad i,j\in\mathbb{Z}  \right\}, 
	\]
which corresponds to the pairs of periodic grid functions whose point values are in the Gibbs Triangle, $\mathcal{G}$. Define $\kappa(\phi): = \frac{1}{36\phi}$. The discrete energy $G_h(\phi_1,\phi_2):\vec{\mathcal{C}}_{\rm per}^{\mathcal{G}}\rightarrow \mathbb{R}$ is introduced as
	\begin{align}
G_h(\phi_1,\phi_2) & =\ciptwo{ S(\phi_1,\phi_2) + H(\phi_1,\phi_2) }{1}
	\nonumber
	\\
& \quad +\ciptwo{a_x(\kappa(A_x\phi_1)(D_x\phi_1)^2)+a_y(\kappa(A_y\phi_1)(D_y\phi_1)^2)}{\varepsilon_1^2}
	\nonumber
	\\
&\quad +\ciptwo{a_x(\kappa(A_x\phi_2)(D_x\phi_2)^2)+a_y(\kappa(A_y\phi_2)(D_y\phi_2)^2)}{\varepsilon_2^2}
	\nonumber
	\\
& \quad +\ciptwo{a_x(\kappa(A_x (1-\phi_1-\phi_2))(D_x(1-\phi_1-\phi_2))^2)}{\varepsilon_3^2}
	\nonumber
	\\
& \quad +\ciptwo{a_y(\kappa(A_y(1-\phi_1-\phi_2))(D_y(1-\phi_1-\phi_2))^2)}{\varepsilon_3^2}.
	\label{Full-discrete-energy}
	\end{align}

	\begin{lemma}[Existence of a convex-concave decomposition]
	\label{Full-discrete-energy-splitting}
Suppose $(\phi_1,\phi_2) \in \vec{\mathcal{C}}_{\rm per}^{\mathcal{G}}$. The functions
	\begin{align}
G_{h,c}(\phi_1,\phi_2) &: =\ciptwo{S(\phi_1,\phi_2) }{1}
	\label{Discrete-energy-c}
	\\
&  \quad + \ciptwo{a_x(\kappa(A_x\phi_1)(D_x\phi_1)^2)+a_y(\kappa(A_y\phi_1)(D_y\phi_1)^2)}{\varepsilon_1^2}
	\nonumber
	\\
& \quad + \ciptwo{a_x(\kappa(A_x\phi_2)(D_x\phi_2)^2)+a_y(\kappa(A_y\phi_2)(D_y\phi_2)^2)}{\varepsilon_2^2}
	\nonumber
	\\
& \quad +\ciptwo{a_x(\kappa(A_x(1-\phi_1-\phi_2))(D_x(1-\phi_1-\phi_2))^2)}{\varepsilon_3^2}
	\nonumber
	\\
& \quad +\ciptwo{a_y(\kappa(A_y(1-\phi_1-\phi_2))(D_y(1-\phi_1-\phi_2))^2)}{\varepsilon_3^2},	
	\nonumber
	\\
G_{h,e}(\phi_1,\phi_2) &:= -\ciptwo{ H(\phi_1,\phi_2) }{1} ,
	\label{Discrete-energy-e}
	\end{align}
are convex. Therefore, $G_h(\phi_1,\phi_2)=G_{h,c}(\phi_1,\phi_2)-G_{h,e}(\phi_1,\phi_2)$ is a convex-concave decomposition of the discrete energy.
	\end{lemma}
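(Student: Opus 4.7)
The plan is to mirror, line-for-line, the continuous convexity proof of \Cref{lemma-1}, exploiting the two elementary facts that convexity is preserved under (a) affine changes of variables and (b) finite positive linear combinations. The convex building blocks $T_1$, $T_2$, $S$ and $-H$ established in \Cref{convex splitting} will furnish the pointwise and edgewise convex atoms, once each term of $G_{h,c}$ and $G_{h,e}$ is unfolded into a plain sum over cell- or edge-centers. The side conditions required by \Cref{convex splitting}(1)--(5) (positivity of $u$ in $T_1$, the constraint $u_1+u_2<1$ in $T_2$) will follow directly from the hypothesis $(\phi_1,\phi_2)\in\vec{\mathcal{C}}_{\rm per}^{\mathcal{G}}$ together with the identities $A_x(1-\phi_1-\phi_2)=1-A_x\phi_1-A_x\phi_2$ and its $A_y$ analogue, since $A_x$ and $A_y$ are convex combinations of neighbouring cell values and therefore preserve strict inequalities of the form $u>0$ and $u_1+u_2<1$.

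The easy piece is $G_{h,e}$: at every cell $(i,j)$ the value $H(\phi_{1,i,j},\phi_{2,i,j})$ depends only on the two scalars $(\phi_{1,i,j},\phi_{2,i,j})\in\mathcal{G}$, and \Cref{convex splitting}(5) says $-H$ is convex on $\mathcal{G}$; summing against $h^2$ preserves convexity. The $S$-line of $G_{h,c}$, namely $\ciptwo{S(\phi_1,\phi_2)}{1}=h^2\sum_{i,j} S(\phi_{1,i,j},\phi_{2,i,j})$, is handled the same way via \Cref{convex splitting}(4). For the $\varepsilon_1^2$- and $\varepsilon_2^2$-lines, I would first use $N^2$-periodicity and a shift of indices to collapse $\ciptwo{a_x(\cdot)}{1}$ into a clean edge-centered sum $h^2\sum_{i,j}(\cdot)_{i+\hf,j}$ (and similarly for $a_y$), after which the $\varepsilon_1^2$-line becomes $\varepsilon_1^2 h^2\sum_{i,j}T_1\bigl((A_x\phi_1)_{i+\hf,j},(D_x\phi_1)_{i+\hf,j}\bigr)$ plus its $y$-analogue; each summand is $T_1$ composed with the affine maps $A_x$ and $D_x$ of $\phi_1$, and $A_x\phi_1>0$ at every edge because $\phi_1>0$ at every cell, so \Cref{convex splitting}(1) applies. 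The $\varepsilon_2^2$-line is identical with $\phi_2$ in place of $\phi_1$. For the $\varepsilon_3^2$-line, I would invoke $A_x(1-\phi_1-\phi_2)=1-A_x\phi_1-A_x\phi_2$ and $D_x(1-\phi_1-\phi_2)=-D_x\phi_1-D_x\phi_2$ to recast each summand as $T_2\bigl((A_x\phi_1)_{i+\hf,j},(A_x\phi_2)_{i+\hf,j},-(D_x\phi_1)_{i+\hf,j},-(D_x\phi_2)_{i+\hf,j}\bigr)$; the hypothesis $\phi_1+\phi_2<1$ at every cell forces $A_x\phi_1+A_x\phi_2<1$ at every $x$-edge, so \Cref{convex splitting}(2) delivers convexity of each summand, and the $y$-direction is analogous.

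The only genuinely technical step is the small bookkeeping that rewrites the cell-centered averages $\ciptwo{a_x(\cdot)}{1}$ and $\ciptwo{a_y(\cdot)}{1}$ as clean edge-center sums via periodicity; after that, the proof reduces to repeated application of \emph{convex composed with affine is convex}, followed by a positive summation, with no new analytic ingredient beyond \Cref{convex splitting}. I therefore do not anticipate any substantive obstacle, and the discrete computation is entirely parallel to the integrated argument used for \Cref{lemma-1}.
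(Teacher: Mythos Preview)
Your proposal is correct and follows essentially the same approach as the paper: expand the discrete energy into a finite sum, identify each summand as one of the convex atoms from \Cref{convex splitting} composed with an affine map of the grid values, and conclude by positive summation. The only cosmetic difference is that the paper keeps both halves of the $a_x$/$a_y$ averages and packages each edge term via $T_3$ (with raw cell values $\phi_{i+1,j},\phi_{i,j}$ as the first two arguments), whereas you first collapse the average to a single edge sum by periodicity and then invoke $T_1$ and $T_2$ with $A_x\phi$, $D_x\phi$ as arguments; since $T_3(u,v,w)=T_1\bigl(\tfrac{u+v}{2},w\bigr)/2$ these are the same computation up to a harmless factor.
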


	\begin{proof}
We look at the detailed expansions of $G_{h,c}(\phi_1,\phi_2)$ and $G_{h,e}(\phi_1,\phi_2)$:
	\begin{align*}
G_{h,c}(\phi_1,\phi_2) & = h^2 \sumij \Bigl(
 S({\phi_1}_{i,j},{\phi_2}_{i,j}) \Bigr.
 	\\
& \quad \Bigl.
+\varepsilon_1^2 T_3({\phi_1}_{i+1,j},{\phi_1}_{i,j},D_x{\phi_1}_{i+\hf,j})
+\varepsilon_1^2 T_3({\phi_1}_{i,j},{\phi_1}_{i-1,j},D_x{\phi_1}_{i-\hf,j})
\Bigr.
	\\
& \quad \Bigl.
+\varepsilon_1^2 T_3({\phi_1}_{i,j+1},{\phi_1}_{i,j},D_y{\phi_1}_{i,j+\hf})
+\varepsilon_1^2 T_3({\phi_1}_{i,j},{\phi_1}_{i,j-1},D_y{\phi_1}_{i,j-\hf})
\Bigr.
	\\
& \quad \Bigl.
+\varepsilon_2^2 T_3({\phi_2}_{i+1,j},{\phi_2}_{i,j},D_x{\phi_2}_{i+\hf,j})
+\varepsilon_2^2 T_3({\phi_2}_{i,j},{\phi_2}_{i-1,j},D_x{\phi_2}_{i-\hf,j})
\Bigr.
	\\
& \quad \Bigl.
+\varepsilon_2^2 T_3({\phi_2}_{i,j+1},{\phi_2}_{i,j},D_y{\phi_2}_{i,j+\hf})
+\varepsilon_2^2 T_3({\phi_2}_{i,j},{\phi_2}_{i,j-1},D_y{\phi_2}_{i,j-\hf})
\Bigr.
	\\
& \quad \Bigl.
+\varepsilon_3^2 T_3({(1-\phi_1-\phi_2)}_{i+1,j},{(1-\phi_1-\phi_2)}_{i,j},D_x{(1-\phi_1-\phi_2)}_{i+\hf,j})
\Bigr.
	\\
& \quad \Bigl.
+\varepsilon_3^2 T_3({(1-\phi_1-\phi_2)}_{i,j},{(1-\phi_1-\phi_2)}_{i-1,j},D_x{(1-\phi_1-\phi_2)}_{i-\hf,j})
\Bigr.
	\\
& \quad \Bigl.
+\varepsilon_3^2 T_3({(1-\phi_1-\phi_2)}_{i,j+1},{(1-\phi_1-\phi_2)}_{i,j},D_y{(1-\phi_1-\phi_2)}_{i,j+\hf})
\Bigr.
	\\
& \quad \Bigl.
+\varepsilon_3^2 T_3({(1-\phi_1-\phi_2)}_{i,j},{(1-\phi_1-\phi_2)}_{i,j-1},D_y{(1-\phi_1-\phi_2)}_{i,j-\hf})
\Bigr),
	\\
G_{h,e}(\phi_1,\phi_2) & = -h^2 \sumij H({\phi_1}_{i,j},{\phi_2}_{i,j}).
	\end{align*}
It's clear that $G_{h,c}$ and $G_{h,e}$ are linear combination of certain convex functions;  see the analysis in \Cref{convex splitting}. Therefore, they are both convex.
	\end{proof}

	\begin{proposition}
Suppose $(\phi_1,\phi_2) \in \vec{\mathcal{C}}_{\rm per}^{\mathcal{G}}$. The variational derivatives of $G_{h,c}$ and $G_{h,e}$ with respect to $\phi_1$ and $\phi_2$ are grid functions satisfying	
	\begin{align}
 \delta_{\phi_i}G_{h,c}(\phi_1,\phi_2) & =  \frac{\partial}{\partial \phi_i} S(\phi_1,\phi_2)
	\label{Derivative-energy-c}
 	\\
& \quad +\varepsilon_i^2 a_x(\kappa'(A_x\phi_i)(D_x\phi_i)^2)-2\varepsilon_i^2 d_x(\kappa(A_x\phi_i) D_x\phi_i)
 	\nonumber
 	\\
& \quad + \varepsilon_i^2 a_y(\kappa'(A_y\phi_i)(D_y\phi_i)^2)-2\varepsilon_i^2 d_y(\kappa(A_y\phi_i) D_y\phi_i)
	\nonumber
	\\
& \quad - \varepsilon_3^2 a_x( \kappa'(A_x(1-\phi_1-\phi_2))(D_x(1-\phi_1-\phi_2))^2)
	\nonumber
	\\
& \quad + 2\varepsilon_3^2 d_x(\kappa(A_x(1-\phi_1-\phi_2)) D_x(1-\phi_1-\phi_2) )
	\nonumber
	\\
& \quad - \varepsilon_3^2 a_y( \kappa'(A_y(1-\phi_1-\phi_2))(D_y(1-\phi_1-\phi_2))^2)
	\nonumber
	\\
& \quad  + 2\varepsilon_3^2 d_y(\kappa(A_y(1-\phi_1-\phi_2)) D_y(1-\phi_1-\phi_2) ),
	\nonumber
 	\\
\delta_{\phi_i}G_{h,e}(\phi_1,\phi_2) & = - \frac{\partial}{\partial \phi_i} H(\phi_1,\phi_2),
	\label{Derivative-energy-e}
	\end{align}
for $i = 1,2$.
	\end{proposition}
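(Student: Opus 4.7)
The plan is to compute each variational derivative directly from the defining identity $\left.\frac{d}{ds}\right|_{s=0} G_{h,c}(\phi_1 + s\psi, \phi_2) = \ciptwo{\delta_{\phi_1} G_{h,c}(\phi_1,\phi_2)}{\psi}$ for arbitrary periodic test $\psi \in \mathcal{C}_{\rm per}$, and analogously for $\phi_2$ and for $G_{h,e}$. For the concave piece $G_{h,e}(\phi_1,\phi_2) = -\ciptwo{H(\phi_1,\phi_2)}{1}$ the dependence on the grid values is pointwise, so the derivative is immediately $-\partial_{\phi_i} H$, yielding \eqref{Derivative-energy-e}. The $S$-piece of $G_{h,c}$ is likewise pointwise and contributes $\partial_{\phi_i} S(\phi_1,\phi_2)$.

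For the diagonal $\varepsilon_i^2$-gradient block of $G_{h,c}$, I would process one typical term at a time, say $\varepsilon_i^2 \eipx{\kappa(A_x\phi_i)(D_x\phi_i)^2}{1}$. Substituting $\phi_i \mapsto \phi_i + s\psi$, using linearity of $A_x$ and $D_x$, and applying the scalar chain rule pointwise at each edge center, the derivative at $s=0$ equals
\begin{equation*}
\varepsilon_i^2 \eipx{\kappa'(A_x\phi_i)(D_x\phi_i)^2}{A_x\psi} + 2\varepsilon_i^2 \eipx{\kappa(A_x\phi_i) D_x\phi_i}{D_x\psi}.
\end{equation*}
I would then peel the operators off $\psi$ by summation-by-parts: the adjoint identities $\eipx{F}{A_x\psi} = \ciptwo{a_x F}{\psi}$ and $\eipx{G}{D_x\psi} = -\ciptwo{d_x G}{\psi}$ are immediate by index-shifting under periodicity and fall into the same family as \Cref{lemma1}. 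Applying them converts the display above into the cell-centered grid functions $\varepsilon_i^2 a_x(\kappa'(A_x\phi_i)(D_x\phi_i)^2)$ and $-2\varepsilon_i^2 d_x(\kappa(A_x\phi_i) D_x\phi_i)$ appearing in \eqref{Derivative-energy-c}. The $y$-direction block is handled identically with $(A_y,D_y,a_y,d_y)$.

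The $\varepsilon_3^2$-cross block, built from $1-\phi_1-\phi_2$, is where some care with signs is required, and this is the step I expect to be the main and essentially only obstacle. The perturbation $\phi_i \mapsto \phi_i + s\psi$ induces $1-\phi_1-\phi_2 \mapsto (1-\phi_1-\phi_2) - s\psi$, so each of the two $s$-derivative terms produced by the chain rule on $\kappa(A_x(1-\phi_1-\phi_2))(D_x(1-\phi_1-\phi_2))^2$ picks up exactly one factor of $-1$ relative to the analogous computation in the previous paragraph. The same summation-by-parts step then produces $-\varepsilon_3^2 a_x(\kappa'(A_x(1-\phi_1-\phi_2))(D_x(1-\phi_1-\phi_2))^2)$ together with $+2\varepsilon_3^2 d_x(\kappa(A_x(1-\phi_1-\phi_2)) D_x(1-\phi_1-\phi_2))$, in agreement with the $\varepsilon_3^2$-rows of \eqref{Derivative-energy-c}. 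The $j\ne i$ diagonal $\varepsilon_j^2$-block is independent of $\phi_i$ and contributes nothing, and collecting the $x$- and $y$-pieces of the $S$-, $\varepsilon_i^2$-, and $\varepsilon_3^2$-contributions yields \eqref{Derivative-energy-c}, completing the plan.
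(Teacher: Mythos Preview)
Your proposal is correct and follows essentially the same route as the paper: both compute the directional derivative $\left.\frac{d}{ds}\right|_{s=0}G_{h,c}(\phi_i+s\psi,\phi_j)$, apply the chain rule at edge centers using the linearity of $A_x,D_x,A_y,D_y$, and then transfer the operators off $\psi$ via the periodic adjoint identities $\eipx{F}{A_x\psi}=\ciptwo{a_xF}{\psi}$ and $\eipx{G}{D_x\psi}=-\ciptwo{d_xG}{\psi}$ (the paper writes these out term by term rather than naming them, but the mechanism is identical). Your sign-tracking for the $\varepsilon_3^2$-block and the observation that the $\varepsilon_j^2$-block with $j\ne i$ contributes nothing match the paper's computation exactly.
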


	\begin{proof}
Fix $(\phi_1,\phi_2) \in \vec{\mathcal{C}}_{\rm per}^{\mathcal{G}}$ and let $\psi_1\in\mathcal{C}_{\rm per}$. Define the function of one variable
	\[
J_{1,c}(\lambda) = G_{h,c}(\phi_1+\lambda \psi_1,\phi_2),
	\]
for all $\lambda\in \mathbb{R}$ sufficiently small that $(\phi_1+\lambda \psi_1,\phi_2)\in \vec{\mathcal{C}}_{\rm per}^{\mathcal{G}}$. The function $J_{1,c}(\lambda)$ is continuous and differentiable. By definition, the variational derivative satisfies
	\[
 J'_{1,c}(0) = \ciptwo{\delta_{\phi_1}G_{h,c}(\phi_1,\phi_2)}{\psi_1}.	
	\]
Since the operators $a_x,A_x,D_x,a_y,A_y$ and $D_y$ are all linear, the following derivation is available
	\begin{align*}
 J'_{1,c}(0) & =\ciptwo{\frac{\partial}{\partial \phi_1} S(\phi_1,\phi_2)\psi_1 }{1}
 	\\
& \quad  + \varepsilon_1^2 \eipx{\kappa'(A_x\phi_1)A_x\psi_1(D_x\phi_1)^2+2\kappa(A_x\phi_1) D_x\phi_1 D_x\psi_1}{A_x 1}
	\\
& \quad  + \varepsilon_1^2 \eipy{\kappa'(A_y\phi_1)A_y\psi_1(D_y\phi_1)^2+2\kappa(A_y\phi_1) D_y\phi_1 D_y\psi_1}{A_y 1}
	\\
& \quad + \varepsilon_3^2 [ -\kappa'(A_x(1-\phi_1-\phi_2)) A_x\psi_1(D_x(1-\phi_1-\phi_2))^2
	\\
& \quad - 2\kappa(A_x(1-\phi_1-\phi_2)) D_x(1-\phi_1-\phi_2) D_x\psi_1 ,  A_x 1 ]_x
 	\\
& \quad + \varepsilon_3^2 [ -\kappa'(A_y(1-\phi_1-\phi_2))A_y\psi_1(D_y(1-\phi_1-\phi_2))^2
 	\\
& \quad - 2 \kappa (A_y(1-\phi_1-\phi_2)) D_y(1-\phi_1-\phi_2) D_y\psi_1 , A_y 1]_y
 	\\
& = \ciptwo{\frac{\partial}{\partial \phi_1} S(\phi_1,\phi_2) }{\psi_1}
 	\\
& \quad + \varepsilon_1^2 \ciptwo{a_x(\kappa'(A_x\phi_1)(D_x\phi_1)^2)-2d_x(\kappa(A_x\phi_1) D_x\phi_1) }{\psi_1}
 	\\
& \quad + \varepsilon_1^2 \ciptwo{a_y(\kappa'(A_y\phi_1)(D_y\phi_1)^2)-2d_y(\kappa(A_y\phi_1) D_y\phi_1) }{\psi_1}
	\\
& \quad + \varepsilon_3^2 \langle -a_x(\kappa'(A_x(1-\phi_1-\phi_2))(D_x(1-\phi_1-\phi_2))^2)
	\\
& \quad + 2d_x(\kappa(A_x(1-\phi_1-\phi_2)) D_x(1-\phi_1-\phi_2) ) , \psi_1 \rangle_\Omega
	\\
& \quad + \varepsilon_3^2 \langle -a_y(\kappa'(A_y(1-\phi_1-\phi_2))(D_y(1-\phi_1-\phi_2))^2)
	\\
& \quad +2d_y(\kappa(A_y(1-\phi_1-\phi_2)) D_y(1-\phi_1-\phi_2) ) , \psi_1 \rangle_\Omega .
	\end{align*}
Therefore,
	\begin{align*}
\delta_{\phi_1}G_{h,c}(\phi_1,\phi_2) & =  \frac{\partial}{\partial \phi_1} S(\phi_1,\phi_2)
	\\
& \quad  +\varepsilon_1^2 a_x(\kappa'(A_x\phi_1)(D_x\phi_1)^2) -2\varepsilon_1^2 d_x(\kappa(A_x\phi_1) D_x\phi_1)
	\\
& \quad + \varepsilon_1^2 a_y(\kappa'(A_y\phi_1)(D_y\phi_1)^2) -2\varepsilon_1^2 d_y(\kappa(A_y\phi_1) D_y\phi_1)
	\\
& \quad  -\varepsilon_3^2 a_x(\kappa'(A_x(1-\phi_1-\phi_2))(D_x(1-\phi_1-\phi_2))^2)
	\\
& \quad + 2\varepsilon_3^2 d_x(\kappa(A_x(1-\phi_1-\phi_2)) D_x(1-\phi_1-\phi_2) )
	\\
& \quad -\varepsilon_3^2 a_y(\kappa'(A_y(1-\phi_1-\phi_2))(D_y(1-\phi_1-\phi_2))^2)
	\\
& \quad  + 2\varepsilon_3^2 d_y(\kappa(A_y(1-\phi_1-\phi_2)) D_y(1-\phi_1-\phi_2) ).
	\end{align*}
The derivations for
$\delta_{\phi_2}G_{h,c}(\phi_1,\phi_2)$, $\delta_{\phi_1}G_{h,e}(\phi_1,\phi_2)$ and
$\delta_{\phi_2}G_{h,e}(\phi_1,\phi_2)$ are quite similar and are omitted for the sake of brevity.
	\end{proof}

	\begin{lemma}
Suppose that $\vec{\phi},\vec{\psi}\in \vec{\mathcal{C}}_{\rm per}^{\mathcal{G}}$. Consider the canonical convex splitting of the energy $G_h(\vec{\phi})$ in~\eqref{Full-discrete-energy} into $G_h=G_{h,c}-G_{h,e}$ given by~\eqref{Discrete-energy-c} -- \eqref{Discrete-energy-e}. The following inequality is available
	
	\begin{align}
G_h(\vec{\phi})-G_h(\vec{\psi}) &\le \ciptwo{\delta_{\phi_1} G_{h,c}(\vec{\phi})-\delta_{\phi_1} G_{h,e}(\vec{\psi})}{\phi_1-\psi_1}
	\label{Full-energy-inequality}
	\\
& \quad +\ciptwo{\delta_{\phi_2} G_{h,c}(\vec{\phi})-\delta_{\phi_2} G_{h,e}(\vec{\psi})}{\phi_2-\psi_2}.
	\nonumber
	\end{align}
	\end{lemma}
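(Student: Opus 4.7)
The plan is to mirror the continuous argument that produced \eqref{Semi-energy-inequality}, now in the fully discrete setting. All the heavy lifting has already been done: \Cref{Full-discrete-energy-splitting} supplies the convexity of both $G_{h,c}$ and $G_{h,e}$ on the convex set $\vec{\mathcal{C}}_{\rm per}^{\mathcal{G}}$, while the preceding proposition gives explicit point-wise formulas \eqref{Derivative-energy-c}--\eqref{Derivative-energy-e} for the variational derivatives as grid functions paired against $\ciptwo{\cdot}{\cdot}$. Because those formulas are already in strong form, no analogue of integration-by-parts is required, so the discrete argument is slightly cleaner than the continuous one.

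First I would fix $\vec{\phi},\vec{\psi}\in\vec{\mathcal{C}}_{\rm per}^{\mathcal{G}}$ and observe that, since the Gibbs triangle $\mathcal{G}$ is convex and the admissibility constraint is imposed point-wise, the linear interpolant $\vec{\phi}+\lambda(\vec{\psi}-\vec{\phi})$ lies in $\vec{\mathcal{C}}_{\rm per}^{\mathcal{G}}$ for every $\lambda\in[0,1]$. Setting $J_c(\lambda):=G_{h,c}\bigl(\vec{\phi}+\lambda(\vec{\psi}-\vec{\phi})\bigr)$ produces a convex, differentiable function on $[0,1]$, so the one-dimensional tangent-line bound $J_c(1)\ge J_c(0)+J_c'(0)$, combined with the chain rule and the identification $J_c'(0)=\ciptwo{\delta_{\phi_1}G_{h,c}(\vec{\phi})}{\psi_1-\phi_1}+\ciptwo{\delta_{\phi_2}G_{h,c}(\vec{\phi})}{\psi_2-\phi_2}$ from the preceding proposition, yields after rearrangement
\begin{equation*}
G_{h,c}(\vec{\phi})-G_{h,c}(\vec{\psi})\le \ciptwo{\delta_{\phi_1}G_{h,c}(\vec{\phi})}{\phi_1-\psi_1}+\ciptwo{\delta_{\phi_2}G_{h,c}(\vec{\phi})}{\phi_2-\psi_2}.
\end{equation*}
An entirely analogous argument applied to $J_e(\lambda):=G_{h,e}\bigl(\vec{\psi}+\lambda(\vec{\phi}-\vec{\psi})\bigr)$, with the crucial reversal of base point so that the gradient is read off at $\vec{\psi}$ rather than at $\vec{\phi}$, delivers
\begin{equation*}
G_{h,e}(\vec{\phi})-G_{h,e}(\vec{\psi})\ge \ciptwo{\delta_{\phi_1}G_{h,e}(\vec{\psi})}{\phi_1-\psi_1}+\ciptwo{\delta_{\phi_2}G_{h,e}(\vec{\psi})}{\phi_2-\psi_2}.
\end{equation*}
Subtracting the second inequality from the first and recalling $G_h=G_{h,c}-G_{h,e}$ produces \eqref{Full-energy-inequality} directly.

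No serious obstacle is expected; the proof is essentially the finite-dimensional shadow of the continuous one, and indeed simpler because the derivatives \eqref{Derivative-energy-c}--\eqref{Derivative-energy-e} have absorbed all the difference-operator manipulations a priori. The only point requiring genuine care is the deliberately asymmetric choice of base points—convex part expanded at $\vec{\phi}$, concave part expanded at $\vec{\psi}$—which is what ensures that the resulting right-hand side has exactly the implicit/explicit structure needed to match the convex-splitting discretization used in the scheme developed in the next section, where $\delta_{\phi_i}G_{h,c}$ will be treated implicitly and $\delta_{\phi_i}G_{h,e}$ explicitly.
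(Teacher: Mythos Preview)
Your proposal is correct and follows essentially the same route as the paper: both reduce to one-dimensional tangent-line inequalities for $G_{h,c}$ and $G_{h,e}$, invoke the convexity established in \Cref{Full-discrete-energy-splitting}, and identify the derivatives via the preceding proposition. If anything, your version is slightly cleaner in that you explicitly use the convexity of $\vec{\mathcal{C}}_{\rm per}^{\mathcal{G}}$ to keep the interpolant admissible on all of $[0,1]$, whereas the paper's phrasing (``since $\lambda$ is small in magnitude'') is a bit loose on this point.
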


	\begin{proof}
Fix $\vec{\phi}\in \vec{\mathcal{C}}_{\rm per}^{\mathcal{G}}$ and $\vec{\varphi}\in\mathcal{C}_{\rm per} \times \mathcal{C}_{\rm per}$. Let $\mathcal{N}\subset\mathbb{R}$ be a sufficiently small neighborhood of $0$. For all $\lambda\in\mathcal{N}$, we can define the continuous and differentiable function $J_c(\lambda) := G_{h,c}(\vec{\phi}+\lambda \vec{\varphi})$. It is clear that $J_c(\lambda)$ is convex, since $G_{h,c}$ is convex. We have $J_c(\lambda)-J_c(0)\geq J'_c(0) \lambda$, for any $\lambda\in\mathcal{N}$. This implies that
	\[
G_{h,c}(\vec{\phi}+\lambda \vec{\varphi}) - G_{h,c}(\vec{\phi}) \geq
 \ciptwo{\delta_{\phi_1}G_{h,c}(\vec{\phi})}{\lambda\varphi_1} +
 \ciptwo{\delta_{\phi_2}G_{h,c}(\vec{\phi})}{\lambda\varphi_2}.
	\]
We may assume that $\vec{\psi}:=\vec{\phi}+\lambda \vec{\varphi}\in \vec{\mathcal{C}}_{\rm per}^{\mathcal{G}}$ since $\lambda$ is small in magnitude. Then we have
	\[
G_{h,c}(\vec{\psi}) - G_{h,c}(\vec{\phi}) \geq \ciptwo{\delta_{\phi_1}G_{h,c}(\vec{\phi})}{\psi_1-\phi_1} + \ciptwo{\delta_{\phi_2}G_{h,c}(\vec{\phi})}{\psi_2-\phi_2}.
	\]
For $G_{h,e}$, we have a similar inequality:
	\[
G_{h,e}(\vec{\psi}) - G_{h,e}(\vec{\phi}) \geq \ciptwo{\delta_{\phi_1}G_{h,e}(\vec{\phi})}{\psi_1-\phi_1} + \ciptwo{\delta_{\phi_2}G_{h,e}(\vec{\phi})}{\psi_2-\phi_2} .
	\]
Combining these inequalities, we obtain  
	\begin{align*}
G_h(\vec{\phi})-G_h(\vec{\psi}) & = \left(G_{h,c}(\vec{\phi})-G_{h,c}(\vec{\psi})\right)
- \left(G_{h,e}(\vec{\phi})-G_{h,e}(\vec{\psi})\right)
	\\
& \leq \ciptwo{\delta_{\phi_1}G_{h,c}(\vec{\phi})}{\phi_1-\psi_1} + \ciptwo{\delta_{\phi_2}G_{h,c}(\vec{\phi})}{\phi_2-\psi_2}
	\\
& \quad - \ciptwo{\delta_{\phi_1}G_{h,e}(\vec{\psi})}{\phi_1-\psi_1} - \ciptwo{\delta_{\phi_2}G_{h,e}(\vec{\psi})}{\phi_2-\psi_2}
	\\
& = \ciptwo{\delta_{\phi_1} G_{h,c}(\vec{\phi})-\delta_{\phi_1} G_{h,e}(\vec{\psi})}{\phi_1-\psi_1}
	\\
& \quad +\ciptwo{\delta_{\phi_2} G_{h,c}(\vec{\phi})-\delta_{\phi_2} G_{h,e}(\vec{\psi})}{\phi_2-\psi_2}.
	\end{align*}
	\end{proof}

Using the standard approach in the convex splitting, the fully discrete scheme is as follows: for
$n\geq 0$, given $(\phi_1^n, \phi_2^n) \in \vec{\mathcal{C}}_{\rm per}^{\mathcal{G}}$, find $(\phi_1^{n+1},\phi_2^{n+1}) \in \vec{\mathcal{C}}_{\rm per}^{\mathcal{G}}$ such that
     \begin{align}
\frac{\phi_1^{n+1}-\phi_1^n}{\dt} & =  \mathcal{M}_1 \Dh \mu_1^{n+1}
	\label{Full-discrete-1},
	\\
\mu_1^{n+1} &: =\delta_{\phi_1}G_{h,c}(\phi_1^{n+1},\phi_2^{n+1})-
\delta_{\phi_1}G_{h,e}(\phi_1^n,\phi_2^n),
	\\
\frac{\phi_2^{n+1}-\phi_2^n}{\dt} & =  \mathcal{M}_2 \Dh \mu_2^{n+1},
	\label{Full-discrete-2}
	\\
\mu_2^{n+1} &:=\delta_{\phi_2}G_{h,c}(\phi_1^{n+1},\phi_2^{n+1})- \delta_{\phi_2}G_{h,e}(\phi_1^n,\phi_2^n).
     \end{align}
%
	
	\section{Positivity-preserving property and unique solvability}
	\label{sec:positivity-preserving property}
	\setcounter{equation}{0}
The proof of the following lemma can be found in~\cite{chen19b}.
 \begin{lemma}
	\label{MMC-positivity-Lem-0}  \cite{chen19b}.	
Suppose that $\phi_1$, $\phi_2 \in \mathcal{C}_{\rm per}$, with $\ciptwo{\phi_1 - \phi_2}{1} = 0$, that is, $\phi_1 - \phi_2\in \mathring{\mathcal{C}}_{\rm per}$, and assume that $\nrm{\phi_1}_\infty < 1$, $\nrm{\phi_2}_\infty \le M$. Then,   we have the following estimate:
     \[
\nrm{(-\Delta_h)^{-1} (\phi_1 - \phi_2)}_\infty \le C_1 ,
     \]
where $C_1>0$ depends only upon $M$ and $\Omega$. In particular, $C_1$ is independent of the mesh size $h$.
\end{lemma}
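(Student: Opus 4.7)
The plan is to combine a discrete $H_h^2$ estimate derived directly from the equation with a uniform-in-$h$ two-dimensional Sobolev-type embedding. Since $\ciptwo{\phi_1-\phi_2}{1}=0$, the problem $-\Dh \psi = \phi_1-\phi_2$ has a unique solution $\psi\in\mathring{\mathcal C}_{\rm per}$; by construction this $\psi$ is exactly $(-\Dh)^{-1}(\phi_1-\phi_2)$, so the task reduces to showing $\nrm{\psi}_\infty\le C_1(M,\Omega)$. The hypotheses immediately give the pointwise bound $\nrm{\phi_1-\phi_2}_\infty \le 1+M$, and hence the mesh-independent $L^2$ estimate $\nrm{\phi_1-\phi_2}_2\le(1+M)\,|\Omega|^{1/2}$ on the source.

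The first substantive step is an $H_h^2$ control of $\psi$. Directly from the equation one has $\nrm{\Dh\psi}_2 = \nrm{\phi_1-\phi_2}_2\le(1+M)|\Omega|^{1/2}$, while the lower-order norms $\nrm{\nabh\psi}_2$ and $\nrm{\psi}_2$ follow from summation-by-parts (Lemma 3.1) and the discrete Poincar\'e inequality on $\mathring{\mathcal C}_{\rm per}$. All of these bounds are uniform in $h$.

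The crucial remaining step is to promote this $H_h^2$ control into an $L^\infty$ bound, uniformly in $h$. I would use the discrete Fourier expansion on the $N\times N$ periodic grid, $\psi_{i,j}=\sum_{k\neq 0}\hat\psi(k)\,e^{2\pi i\,k\cdot x_{i,j}/L}$, observing that the eigenvalues $\lambda_h(k)$ of $-\Dh$ satisfy $c|k|^2\le\lambda_h(k)\le C|k|^2$ over the admissible frequency range, with constants independent of $h$. Cauchy--Schwarz and Parseval then give
\begin{equation*}
\nrm{\psi}_\infty \;\le\; \sum_{k\neq 0}|\hat\psi(k)| \;=\; \sum_{k\neq 0}\lambda_h(k)^{-1}\,\lambda_h(k)\,|\hat\psi(k)| \;\le\; \Bigl(\sum_{k\neq 0}\lambda_h(k)^{-2}\Bigr)^{1/2}\nrm{\Dh\psi}_2 .
\end{equation*}

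The main obstacle is to bound $\sum_{k\ne 0}\lambda_h(k)^{-2}$ by a constant depending only on $\Omega$. This is where the two-dimensional structure is decisive: the eigenvalue comparison reduces the question to the convergence of $\sum_{k\ne 0}|k|^{-4}$, which holds precisely in 2-D (and would fail with this exponent in 3-D, suggesting why a slightly different Green's function or $W^{2,p}_h$ argument would be needed there). Combining the convergent series bound with the $H_h^2$ estimate of the previous step yields $\nrm{\psi}_\infty\le C_1(M,\Omega)$, with $C_1$ independent of $h$, as claimed.
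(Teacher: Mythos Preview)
The paper does not give its own proof of this lemma; it simply cites \cite{chen19b} and moves on. Your spectral argument is a correct and standard way to establish the result: the eigenvalue comparison $\lambda_h(k)\ge c(\Omega)\,|k|^2$ on the admissible frequency range (via $\sin x\ge (2/\pi)x$ on $[0,\pi/2]$) together with the summability of $\sum_{k\ne 0}|k|^{-4}$ over $\mathbb{Z}^2$ gives exactly the uniform-in-$h$ embedding you need, and your $L^2$ control of the source from the $L^\infty$ hypotheses is immediate.

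Two small remarks. First, in your displayed Cauchy--Schwarz line the Parseval normalization used in the paper introduces an extra factor of $|\Omega|^{-1/2}$ on the right, which is harmless since it depends only on $\Omega$. Second, your aside that the series argument ``would fail with this exponent in 3-D'' is not quite right: $\sum_{k\in\mathbb{Z}^3\setminus\{0\}}|k|^{-4}$ still converges, so the same proof goes through in three dimensions; it is only in dimension four and above that one would need a different route.
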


The following theorem is the main result of the paper. It guarantees the well-defined nature of the proposed scheme.

	\begin{theorem}
	\label{MMC-positivity}
 Given $(\phi_1^n,\phi_2^n)\in\vec{\mathcal{C}}_{\rm per}^{\mathcal{G}}$, then $(\overline{\phi_1^n}, \overline{\phi_2^n})\in\mathcal{G}$, and there exists a unique
 solution $(\phi_1^{n+1},\phi_2^{n+1})\in\vec{\mathcal{C}}_{\rm per}^{\mathcal{G}}$ to
 \eqref{Full-discrete-1} -- \eqref{Full-discrete-2}, with $\overline{\phi_1^n} =
 \overline{\phi_1^{n+1}}$ and $\overline{\phi_2^n} = \overline{\phi_2^{n+1}}$.
	\end{theorem}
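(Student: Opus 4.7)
The plan is to recast the discrete system \eqref{Full-discrete-1}--\eqref{Full-discrete-2} as the Euler--Lagrange equations for a strictly convex minimization problem, and then exploit the logarithmic singularities of $S$ to force the minimizer off the boundary of $\vec{\mathcal{C}}_{\rm per}^{\mathcal{G}}$. To this end, I will introduce the functional
\begin{align*}
\mathcal{J}^n(\phi_1,\phi_2) & := \frac{1}{2\mathcal{M}_1\dt}\nrm{\phi_1-\phi_1^n}_{-1,h}^2 + \frac{1}{2\mathcal{M}_2\dt}\nrm{\phi_2-\phi_2^n}_{-1,h}^2 + G_{h,c}(\phi_1,\phi_2) \\
& \qquad - \ciptwo{\delta_{\phi_1}G_{h,e}(\phi_1^n,\phi_2^n)}{\phi_1} - \ciptwo{\delta_{\phi_2}G_{h,e}(\phi_1^n,\phi_2^n)}{\phi_2},
\end{align*}
to be minimized over the mean-constrained admissible set
\[
A^n := \left\{(\phi_1,\phi_2) \in \vec{\mathcal{C}}_{\rm per}^{\mathcal{G}} : \overline{\phi_1}=\overline{\phi_1^n},\ \overline{\phi_2}=\overline{\phi_2^n}\right\}.
\]

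The routine parts come first. Summing \eqref{Full-discrete-1}--\eqref{Full-discrete-2} against the constant $1$ and invoking \Cref{lemma1} gives the mass-conservation identities $\overline{\phi_i^{n+1}}=\overline{\phi_i^n}$; pointwise positivity of $(\phi_1^n,\phi_2^n)$ in $\mathcal{G}$ transfers to the means by linearity, so $(\overline{\phi_1^n},\overline{\phi_2^n})\in\mathcal{G}$, and the constant pair witnesses that $A^n$ is non-empty. Strict convexity of $\mathcal{J}^n$ on $A^n$ will follow from the strict convexity of $\nrm{\cdot}_{-1,h}^2$ on $\mathring{\mathcal{C}}_{\rm per}$ together with the convexity of $G_{h,c}$ from \Cref{Full-discrete-energy-splitting} (the two explicit inner products are linear in $(\phi_1,\phi_2)$). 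Writing the first-order optimality condition with scalar Lagrange multipliers for the two mean constraints, and then applying $\Delta_h$ (which annihilates those multipliers), will recover precisely the scheme \eqref{Full-discrete-1}--\eqref{Full-discrete-2}; uniqueness of an interior minimizer is then immediate from strict convexity.

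The heart of the matter is to show that the minimum of $\mathcal{J}^n$ is attained strictly inside $\vec{\mathcal{C}}_{\rm per}^{\mathcal{G}}$. Following the strategy developed in~\cite{chen19b} for the binary case, I will fix $\delta>0$ small enough that the constant pair sits in the compact truncated set
\[
A^n_\delta := \left\{(\phi_1,\phi_2)\in A^n : \phi_{1,i,j},\phi_{2,i,j}\ge\delta,\ \phi_{1,i,j}+\phi_{2,i,j}\le 1-\delta,\ \forall\, i,j\right\},
\]
obtain a minimizer $(\phi_1^\ast,\phi_2^\ast)$ on $A^n_\delta$ by compactness, and argue by contradiction that for all sufficiently small $\delta$ it cannot saturate any inequality constraint. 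Suppose, say, $\phi_{1,i_0,j_0}^\ast = \delta$ at some node. Mass conservation produces another node $(i_1,j_1)$ at which $\phi_{1,i_1,j_1}^\ast$ is bounded below by a positive constant depending only on $\overline{\phi_1^n}$, not on $\delta$. I will test the mean-zero admissible direction $\psi$ equal to $+1$ at $(i_0,j_0)$, $-1$ at $(i_1,j_1)$, and $0$ elsewhere; the one-sided directional derivative reduces to
\[
\left.\frac{d}{ds}\mathcal{J}^n(\phi_1^\ast+s\psi,\phi_2^\ast)\right|_{s=0^+} = h^2\left([\delta_{\phi_1}\mathcal{J}^n]_{i_0,j_0} - [\delta_{\phi_1}\mathcal{J}^n]_{i_1,j_1}\right).
\]
The dominant contribution at $(i_0,j_0)$ is $\partial_{\phi_1}S(\delta,\phi_{2,i_0,j_0}^\ast) = \tfrac{1}{M_0}\ln\tfrac{\alpha\delta}{M_0}+O(1) \to -\infty$ as $\delta\to 0^+$, while every other piece will be bounded by constants independent of $\delta$: the $\nrm{\cdot}_{-1,h}^2$ piece is controlled via \Cref{MMC-positivity-Lem-0}, the surface-diffusion pieces in $\delta_{\phi_1}G_{h,c}$ will be checked to stay bounded (see next paragraph), and the explicit term $\delta_{\phi_1}G_{h,e}(\phi_1^n,\phi_2^n)$ depends only on $n$-level data. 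Hence the directional derivative is strictly negative for small $\delta$, contradicting minimality. Symmetric arguments, using a test direction of opposite sign at a node where $-\ln(1-\phi_1-\phi_2)\to+\infty$ drives the dominant term, will rule out $\phi_{2,i_0,j_0}^\ast=\delta$ and $\phi_{1,i_0,j_0}^\ast+\phi_{2,i_0,j_0}^\ast=1-\delta$.

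The main obstacle I anticipate is the uniform-in-$\delta$ estimate for the variable surface-diffusion contribution to $\delta_{\phi_1}G_{h,c}$ at the node $(i_0,j_0)$. The singular factor $\kappa'(A_x\phi) = -1/(36(A_x\phi)^2)$ would blow up like $\delta^{-2}$ if a node and a neighbor both sit at $\delta$. The essential observation, which I will verify directly by case analysis on the edges meeting $(i_0,j_0)$, is that in that configuration the factor $D_x\phi$ at the shared edge vanishes identically, so that the product $\kappa'(A_x\phi)(D_x\phi)^2$ remains bounded by $O(h^{-2})$ regardless of how many neighbors cluster at the lower constraint; a parallel estimate handles $\kappa(A_x\phi)D_x\phi$. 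This cancellation is precisely what the constant-diffusion analysis in~\cite{chen19b} did not need to address, and is what makes the present proof substantially more delicate than its binary predecessor. Once it is in place, the contradiction closes and the theorem follows in full.
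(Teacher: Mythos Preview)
Your overall strategy coincides with the paper's: recast the scheme as minimization of a strictly convex functional over the mean-constrained Gibbs triangle, pass to a compact truncation, and rule out boundary minimizers by testing with a two-point, mean-zero direction. Your worry about the de Gennes terms is in fact the easy part: every singular contribution has the form $\kappa'(A_x\phi)(D_x\phi)^2$ or $\kappa(A_x\phi)D_x\phi$, and writing these as $-\tfrac{1}{9h^2}\bigl(\tfrac{\phi_{i+1}-\phi_i}{\phi_{i+1}+\phi_i}\bigr)^2$ (respectively $\tfrac{1}{9h^2}\cdot\tfrac{\phi_{i+1}-\phi_i}{\phi_{i+1}+\phi_i}$) and using $|(a-b)/(a+b)|<1$ gives $\delta$-independent $O(h^{-2})$ bounds without any case analysis. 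The paper does exactly this.

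The genuine gap is in your treatment of the logarithmic part. In Case~1 you write
\[
\partial_{\phi_1}S(\delta,\phi_{2,i_0,j_0}^\ast)=\tfrac{1}{M_0}\ln\tfrac{\alpha\delta}{M_0}+O(1),
\]
but $\partial_{\phi_1}S$ also carries $-\ln(1-\phi_1-\phi_2)$, and nothing in your truncated set $A_\delta^n$ prevents $(\phi_1^\ast+\phi_2^\ast)_{i_0,j_0}$ from sitting at $1-\delta$ simultaneously. In that situation $-\ln(1-\phi_1^\ast-\phi_2^\ast)_{i_0,j_0}=-\ln\delta$, and the sign of the directional derivative is governed by
\[
\tfrac{1}{M_0}\ln\delta-\ln\delta=\bigl(\tfrac{1}{M_0}-1\bigr)\ln\delta,
\]
which has the \emph{wrong} sign as $\delta\to 0^+$ whenever $M_0\ge 1$; the analogous failure occurs for $\phi_2$ whenever $N_0\ge 1$ (e.g.\ the paper's own numerics use $N_0=5.12$). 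Your single-$\delta$ truncation therefore does not close the contradiction in general.

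The paper's remedy is to use \emph{two} scales in the compact set: $\phi_1,\phi_2\ge g(\delta)$ together with $\delta\le\phi_1+\phi_2\le 1-\delta$, and then to choose $g(\delta)=(\delta\,e^{-D_0-1})^{M_0}$ so that $\tfrac{1}{M_0}\ln g(\delta)-\ln\delta=-D_0-1$ is forced sufficiently negative irrespective of the size of $M_0,N_0$. This decoupling (which also introduces a fourth boundary case $\phi_1+\phi_2=\delta$) is exactly the step the paper singles out in its post-proof remark as the ``crucial'' new ingredient over the binary analysis, and it is missing from your proposal.
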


	\begin{proof}
For bookkeeping, we introduce the following notation:
	\[
\delta_{\phi_1}G_{h,c}(\phi_1,\phi_2) = \sum_{\ell = 1}^9 Q_\ell(\phi_1,\phi_2),	
	\]
where
	\begin{align*}
Q_1(\phi_1,\phi_2) & := \frac{\partial}{\partial \phi_1} S(\phi_1,\phi_2),
	\\
Q_2(\phi_1,\phi_2) &:= \varepsilon_1^2 a_x(\kappa'(A_x\phi_1)(D_x\phi_1)^2),
	\\
Q_3(\phi_1,\phi_2) &:= -2\varepsilon_1^2 d_x(\kappa(A_x\phi_1) D_x\phi_1),
	\\
Q_4(\phi_1,\phi_2) & := \varepsilon_1^2 a_y(\kappa'(A_y\phi_1)(D_y\phi_1)^2),
	\\
Q_5(\phi_1,\phi_2) & := -2\varepsilon_1^2 d_y(\kappa(A_y\phi_1) D_y\phi_1) ,
	\\
Q_6(\phi_1,\phi_2) & :=-\varepsilon_3^2 a_x(\kappa'(A_x(1-\phi_1-\phi_2))(D_x(1-\phi_1-\phi_2))^2),
	\\
Q_7(\phi_1,\phi_2) & :=  2\varepsilon_3^2 d_x(\kappa(A_x(1-\phi_1-\phi_2)) D_x(1-\phi_1-\phi_2) ),
	\\
Q_8(\phi_1,\phi_2) & := -\varepsilon_3^2 a_y(\kappa'(A_y(1-\phi_1-\phi_2))(D_y(1-\phi_1-\phi_2))^2) ,
	\\
Q_9(\phi_1,\phi_2) & := 2\varepsilon_3^2 d_y(\kappa(A_y(1-\phi_1-\phi_2)) D_y(1-\phi_1-\phi_2) ).
	\end{align*}
The numerical solution of \eqref{Full-discrete-1} -- \eqref{Full-discrete-2} is a minimizer of the following discrete energy functional:
	\begin{align*}
\mathcal{J}_h^n(\phi_1,\phi_2) & = \frac{1}{2\mathcal{M}_1\dt}\|\phi_1-\phi_1^n\|_{-1,h}^2+\frac{1}{2\mathcal{M}_2\dt} \|\phi_2-\phi_2^n\|_{-1,h}^2+\ciptwo{S(\phi_1,\phi_2) }{1}
	\\
& \quad +\ciptwo{a_x(\kappa(A_x\phi_1)(D_x\phi_1)^2) +a_y(\kappa(A_y\phi_1)(D_y\phi_1)^2)}{\varepsilon_1^2}
	\\
& \quad  + \ciptwo{a_x(\kappa(A_x\phi_2)(D_x\phi_2)^2) +a_y(\kappa(A_y\phi_2)(D_y\phi_2)^2)}{\varepsilon_2^2}
	\\
& \quad  + \langle a_x(\kappa(A_x(1-\phi_1-\phi_2))(D_x(1-\phi_1-\phi_2))^2)
	\\
& \quad +a_y(\kappa(A_y(1-\phi_1-\phi_2))(D_y(1-\phi_1-\phi_2))^2) , \varepsilon_3^2 \rangle_\Omega
	\\
& \quad +\ciptwo{\frac{\partial}{\partial \phi_1} H(\phi_1^n,\phi_2^n)}{\phi_1} +\ciptwo{\frac{\partial}{\partial \phi_2} H(\phi_1^n,\phi_2^n)}{\phi_2} ,
	\end{align*}
over the admissible set
	 \[
A_h := \left\{ (\phi_1,\phi_2) \in \vec{\mathcal{C}}_{\rm per}^{\mathcal{G}} \ \middle| \  \ciptwo{\phi_1}{1} = |\Omega| \overline{\phi_1^0}, \quad \ciptwo{\phi_2}{1}= |\Omega| \overline{\phi_2^0} \right\} \subset \mathbb{R}^{2N^2}.
	 \]
It is clear that $\mathcal{J}_h^n$ is a strictly convex functional.

Now, consider the following closed domain:
	\begin{align*}
A_{h,\delta} & := \Bigl\{  (\phi_1,\phi_2) \in \mathcal{C}_{\rm per}\times \mathcal{C}_{\rm per} \ \Big| \ \phi_1,\phi_2 \ge g(\delta), \delta \le \phi_1+\phi_2 \le 1-\delta, \Bigr.
	\\
& \Bigl. \hspace{1.75in} \ciptwo{\phi_1}{1} = |\Omega| \overline{\phi_1^0}, \quad  \ciptwo{\phi_2}{1}= |\Omega| \overline{\phi_2^0} \Bigr\} \subset \mathbb{R}^{2N^2} ,
	\end{align*}
where $g (\delta) >0$ will be given later. Define the hyperplane
	\begin{equation*}
V := \left\{ (\phi_1,\phi_2) \ \middle| \  \ciptwo{\phi_1}{1} = |\Omega| \overline{\phi_1^0}, \quad  \ciptwo{\phi_2}{1}= |\Omega| \overline{\phi_2^0}\right\} \subset \mathbb{R}^{2N^2}.
	\end{equation*}
Since $A_{h,\delta}$ is a bounded, compact, and convex subset of $V$, there exists (not necessarily unique) a minimizer of $\mathcal{J}_h^n(\phi_1,\phi_2)$ over $A_{h,\delta}$. The key point of the positivity analysis is that, such a minimizer could not occur at a boundary point of $A_{h,\delta}$, if $\delta$ and $g(\delta)$ are sufficiently small.

Assume the minimizer of $\mathcal{J}_h^n(\phi_1,\phi_2)$ over $A_{h,\delta}$ occurs at a boundary point of $A_{h,\delta}$.

Case 1: We suppose the minimizer $(\phi_1^{\star},\phi_2^{\star})\in A_{h,\delta}$, satisfies $(\phi_1^{\star})_{\vec{\alpha_0}}=g(\delta)$, for some grid point $\vec{\alpha_0}:=(i_0,j_0)$. Assume that $\phi_1^{\star}$ reaches its maximum value at the grid point $\vec{\alpha_1}:=(i_1,j_1)$.  It is obvious that $(\phi_1^{\star})_{\vec{\alpha_1}}\geq \overline{\phi_1^{\star}}=\overline{\phi_1^0}$. 

A careful calculation gives the following directional derivative
	\begin{align*}
d_s \mathcal{J}_h^n(\phi_1^\star+s\psi,\phi_2^\star)|_{s=0} & = \frac{1}{\mathcal{M}_1\Delta
t}\ciptwo{(-\Delta)^{-1}\left(\phi_1^\star-\phi_1^n \right)}{\psi}
	\\
& \quad + \ciptwo{\delta_{\phi_1}G_{h,c}(\phi_1^{\star},\phi_2^{\star})}{\psi}+
\ciptwo{\frac{\partial}{\partial \phi_1}H(\phi_1^n,\phi_2^n)}{\psi} ,
	\end{align*}
for any $\psi \in \mathring{\mathcal{C}}_{\rm per }$. Let us pick the direction
    \[
\psi_{i,j} = \delta_{i,i_0}\delta_{j,j_0} - \delta_{i,i_1}\delta_{j,j_1},
	\]
where $\delta_{i,j}$ is the Dirac delta function. Note that $\psi$ is of mean zero. The derivative may be expressed as
	\begin{align}
\frac{1}{h^2}d_s \mathcal{J}_h^n(\phi_1^\star+s\psi,\phi_2^\star)|_{s=0} & =
\frac{1}{\mathcal{M}_1\Delta t}(-\Delta)^{-1}\left(\phi_1^\star-\phi_1^n \right)_{\vec{\alpha_0}}
    \label{MMC-Positive}
    \\
& \quad  -
\frac{1}{\mathcal{M}_1\Delta t}(-\Delta)^{-1}\left(\phi_1^\star-\phi_1^n\right)_{\vec{\alpha_1}}
    \nonumber
	\\
& \quad + \delta_{\phi_1}G_{h,c}(\phi_1^{\star},\phi_2^{\star})_{\vec{\alpha_0}}
- \delta_{\phi_1}G_{h,c}(\phi_1^{\star},\phi_2^{\star})_{\vec{\alpha_1}}
	\nonumber
	\\
& \quad + \left(\frac{\partial}{\partial \phi_1}H(\phi_1^n,\phi_2^n)\right)_{\vec{\alpha_0}}
-\left(\frac{\partial}{\partial \phi_1}H(\phi_1^n,\phi_2^n)\right)_{\vec{\alpha_1}}
	\nonumber
	\\
& = \frac{1}{\mathcal{M}_1\Delta t}(-\Delta)^{-1}\left(\phi_1^\star-\phi_1^n \right)_{\vec{\alpha_0}}
    \nonumber
    \\
& \quad - \frac{1}{\mathcal{M}_1\Delta t}(-\Delta)^{-1}\left(\phi_1^\star-\phi_1^n
\right)_{\vec{\alpha_1}}
	\nonumber
	\\
& \quad +\sumi Q_\ell(\phi_1^{\star},\phi_2^{\star})_{\vec{\alpha}_0}
 -\sumi Q_\ell(\phi_1^{\star},\phi_2^{\star})_{\vec{\alpha}_1}
 	\nonumber
 	\\
& \quad +\left(\frac{\partial}{\partial \phi_1}H(\phi_1^n,\phi_2^n)\right)_{\vec{\alpha_0}}
-\left(\frac{\partial}{\partial \phi_1}H(\phi_1^n,\phi_2^n)\right)_{\vec{\alpha_1}}.
	\nonumber
	\end{align}
For the first and second terms appearing in \eqref{MMC-Positive}, we apply
\Cref{MMC-positivity-Lem-0} and obtain
\begin{equation}
- \frac{2C_1}{\mathcal{M}_1} \le
 \frac{1}{\mathcal{M}_1}(-\Delta)^{-1}\left(\phi_1^\star-\phi_1^n \right)_{\vec{\alpha_0}}
-\frac{1}{\mathcal{M}_1}(-\Delta)^{-1}\left(\phi_1^\star-\phi_1^n \right)_{\vec{\alpha_1}}
\le  \frac{2C_1}{\mathcal{M}_1} .
	\label{MMC-positive-1}
	\end{equation}
For the $Q_1$ terms, the following inequality is available:
    \begin{align}
Q_1(\phi_1^{\star},\phi_2^{\star})_{\vec{\alpha}_0} - Q_1(\phi_1^{\star},\phi_2^{\star})_{\vec{\alpha}_1} & = \frac{\partial}{\partial \phi_1} S(\phi_1^{\star},\phi_2^{\star})_{\vec{\alpha}_0} - \frac{\partial}{\partial \phi_1} S(\phi_1^{\star},\phi_2^{\star})_{\vec{\alpha}_1}
	\label{MMC-positive-2}
	\\
& = \left(\frac{1}{M_0}\ln \frac{\alpha (\phi_1^{\star})}{M_0}-  \ln(1-\phi_1^{\star}-\phi_2^{\star})\right)_{\vec{\alpha}_0}	
	\nonumber
	\\
& \quad -  \left(\frac{1}{M_0}\ln \frac{\alpha (\phi_1^{\star})}{M_0} -
\ln(1-\phi_1^{\star}-\phi_2^{\star})\right)_{\vec{\alpha}_1}
	\nonumber
	\\
& =  \left( \ln
\frac{(\phi_1^{\star})^{\nicefrac{1}{M_0}}}{1-\phi_1^{\star}-\phi_2^{\star}}\right)_{\vec{\alpha}_0} - \left( \ln
\frac{(\phi_1^{\star})^{\nicefrac{1}{M_0}}}{1-\phi_1^{\star}-\phi_2^{\star}}\right)_{\vec{\alpha}_1}
	\nonumber
	\\
& \leq   \ln \frac{(g(\delta))^{\nicefrac{1}{M_0}}}{\delta} -
\ln \frac{(\overline{\phi_1^0})^{\nicefrac{1}{M_0}}}{1-\delta}
	\nonumber
	\\
& \leq \ln \frac{(g(\delta))^{\nicefrac{1}{M_0}}}{\delta} -
\frac{1}{M_0}\ln \overline{\phi_1^0}.
	\nonumber
    \end{align}
Using the logarithm property $\ln(ab) = \ln a + \ln b$, we have eliminated the constant $\frac{1}{M_0}\ln \frac{\alpha}{M_0}$. The next-to-last step comes from the facts that $(\phi_1^{\star})_{\vec{\alpha_0}}=g(\delta)$, $(\phi_1^{\star})_{\vec{\alpha_1}}\geq \overline{\phi_1^0}$ and $\delta \le \phi_1+\phi_2 \le 1-\delta$. The last step comes from the inequality that $\ln (1-\delta) < 0$.

For the $Q_2$ terms, we have
    \begin{align}
Q_2(\phi_1^{\star},\phi_2^{\star})_{\vec{\alpha}_0} - Q_2(\phi_1^{\star},\phi_2^{\star})_{\vec{\alpha}_1} & = \varepsilon_1^2 a_x(\kappa'(A_x\phi_1^{\star})(D_x\phi_1^{\star})^2)_{\vec{\alpha}_0}
	\label{MMC-positive-4}
	\\
& \quad - \varepsilon_1^2 a_x(\kappa'(A_x\phi_1^{\star})(D_x\phi_1^{\star})^2)_{\vec{\alpha}_1}
	\nonumber
	\\
& \leq - \varepsilon_1^2 a_x(\kappa'(A_x\phi_1^{\star})(D_x\phi_1^{\star})^2)_{\vec{\alpha}_1}
	\nonumber
	\\
& \leq \frac{\varepsilon_1^2}{9h^2}.
	\nonumber
	\end{align}
The second step above comes from the fact that
	\[
\varepsilon_1^2 a_x(\kappa'(A_x\phi_1^{\star})(D_x\phi_1^{\star})^2)_{\vec{\alpha}_0}\leq 0,
	\]
since $\kappa'(\phi)= -\frac{1}{36\phi^2}<0$. The last step is based on the definitions of $\kappa'(\phi)$, $a_x$, $A_x$, and $D_x$,  as well as the fact that $|\frac{a-b}{a+b}|<1$, $\forall a>0, b>0$. In details, we observe the following expansion
    \begin{align}
- \varepsilon_1^2 a_x(\kappa'(A_x\phi_1^{\star})(D_x\phi_1^{\star})^2)_{\vec{\alpha}_1} & =
\frac{\varepsilon_1^2}{18h^2}
\left[\frac{(\phi_1^{\star})_{i_1+1,j_1}-(\phi_1^{\star})_{i_1,j_1}}{(\phi_1^{\star})_{i_1+1,j_1}+(\phi_1^{\star})_{i_1,j_1}}\right]^2
	\nonumber
	\\
& \quad  + \frac{\varepsilon_1^2}{18h^2}
\left[\frac{(\phi_1^{\star})_{i_1,j_1}-(\phi_1^{\star})_{i_1-1,j_1}}{(\phi_1^{\star})_{i_1+1,j_1}+(\phi_1^{\star})_{i_1,j_1}}\right]^2
	\nonumber
	\\
& \leq \frac{\varepsilon_1^2}{9h^2} .
	\nonumber
	\end{align}
The $Q_4$ terms can be similarly handled:
	\begin{align}
Q_4(\phi_1^{\star},\phi_2^{\star})_{\vec{\alpha}_0} - Q_4(\phi_1^{\star},\phi_2^{\star})_{\vec{\alpha}_1} & = \varepsilon_1^2 a_y(\kappa'(A_y\phi_1^{\star})(D_y\phi_1^{\star})^2)_{\vec{\alpha}_0}
	\label{MMC-positive-5}
	\\
& \quad - \varepsilon_1^2 a_y(\kappa'(A_y\phi_1^{\star})(D_y\phi_1^{\star})^2)_{\vec{\alpha}_1}
	\nonumber
	\\
& \leq \frac{\varepsilon_1^2}{9h^2} .
	\nonumber
	\end{align}

For the $Q_3$ terms, we see that
	\begin{align}
Q_3(\phi_1^{\star},\phi_2^{\star})_{\vec{\alpha}_0} - Q_3(\phi_1^{\star},\phi_2^{\star})_{\vec{\alpha}_1}& = -2\varepsilon_1^2 d_x(\kappa(A_x\phi_1^{\star}) D_x\phi_1^{\star})_{\vec{\alpha}_0}
	\label{MMC-positive-6}
	\\
& \quad + 2\varepsilon_1^2 d_x(\kappa(A_x\phi_1^{\star}) D_x\phi_1^{\star})_{\vec{\alpha}_1}
	\nonumber
	\\
& \leq 0 ,
	\nonumber
	\end{align}
in which the last step comes from the fact that $(D_x\phi_1^{\star})_{i_0-\nicefrac{1}{2},j_0} \leq 0, (D_x\phi_1^{\star})_{i_0+\nicefrac{1}{2},j_0} \geq 0, (D_x\phi_1^{\star})_{i_1-\nicefrac{1}{2},j_1} \geq 0$, and $(D_x\phi_1^{\star})_{i_1+\nicefrac{1}{2},j_1} \leq 0$. 

A bound for the $Q_5$ terms could be similarly derived:
	\begin{align}
Q_5(\phi_1^{\star},\phi_2^{\star})_{\vec{\alpha}_0} - Q_5(\phi_1^{\star},\phi_2^{\star})_{\vec{\alpha}_1} & = -2\varepsilon_1^2 d_y(\kappa(A_y\phi_1^{\star}) D_y\phi_1^{\star})_{\vec{\alpha}_0}
	\label{MMC-positive-7}
	\\
& \quad + 2\varepsilon_1^2 d_y(\kappa(A_y\phi_1^{\star}) D_y\phi_1^{\star})_{\vec{\alpha}_1}
	\nonumber
	\\
& \leq 0 .
	\nonumber
	\end{align}

Use a technique similar to that used for $Q_2$, the $Q_6$ terms could be controlled as follows:
	\begin{align}
Q_6(\phi_1^{\star},\phi_2^{\star})_{\vec{\alpha}_0} - Q_6(\phi_1^{\star},\phi_2^{\star})_{\vec{\alpha}_1} & = -\varepsilon_3^2 a_x\left(\kappa'(A_x(1-\phi_1^{\star}-\phi_2^{\star}))(D_x(1-\phi_1^{\star}-\phi_2^{\star}))^2\right)_{\vec{\alpha}_0}
	\label{MMC-positive-8}
	\\
& \quad + \varepsilon_3^2 a_x\left(\kappa'(A_x(1-\phi_1^{\star}-\phi_2^{\star}))(D_x(1-\phi_1^{\star}-\phi_2^{\star}))^2\right)_{\vec{\alpha}_1}
	\nonumber
	\\
& \leq -\varepsilon_3^2
a_x\left(\kappa'(A_x(1-\phi_1^{\star}-\phi_2^{\star}))(D_x(1-\phi_1^{\star}-\phi_2^{\star}))^2\right)_{\vec{\alpha}_0}
	\nonumber
	\\
& \leq  \frac{\varepsilon_3^2 }{9h^2} .
	\nonumber
	\end{align}
A similar inequality could be derived for the $Q_8$ terms:
    \begin{align}
Q_8(\phi_1^{\star},\phi_2^{\star})_{\vec{\alpha}_0} - Q_8(\phi_1^{\star},\phi_2^{\star})_{\vec{\alpha}_1} & = -\varepsilon_3^2
a_y\left(\kappa'(A_y(1-\phi_1^{\star}-\phi_2^{\star}))(D_y(1-\phi_1^{\star}-\phi_2^{\star}))^2\right)_{\vec{\alpha}_0}
    \label{MMC-positive-9}
	\\
& \quad + \varepsilon_3^2
a_y\left(\kappa'(A_y(1-\phi_1^{\star}-\phi_2^{\star}))(D_y(1-\phi_1^{\star}-\phi_2^{\star}))^2\right)_{\vec{\alpha}_1}
    \nonumber
    \\
& \leq -\varepsilon_3^2
a_y\left(\kappa'(A_y(1-\phi_1^{\star}-\phi_2^{\star}))(D_y(1-\phi_1^{\star}-\phi_2^{\star}))^2\right)_{\vec{\alpha}_0}
    \nonumber
    \\
& \leq  \frac{\varepsilon_3^2}{9h^2} .
	\nonumber
\end{align}

For the $Q_7$ terms, we have
    \begin{align}
Q_7(\phi_1^{\star},\phi_2^{\star})_{\vec{\alpha}_0} -
Q_7(\phi_1^{\star},\phi_2^{\star})_{\vec{\alpha}_1} & = 2 \varepsilon_3^2 d_x(\kappa(A_x(1-\phi_1^{\star}-\phi_2^{\star})) D_x(1-\phi_1^{\star}-\phi_2^{\star}) )_{\vec{\alpha}_0}
    \label{MMC-positive-10}
    \\
& \quad - 2 \varepsilon_3^2
d_x(\kappa(A_x(1-\phi_1^{\star}-\phi_2^{\star})) D_x(1-\phi_1^{\star}-\phi_2^{\star}) )_{\vec{\alpha}_1} 	
    \nonumber
    \\
& = \frac{\varepsilon_3^2}{18h}
\left(\frac{D_x(1-\phi_1^{\star}-\phi_2^{\star})}{A_x(1-\phi_1^{\star}-\phi_2^{\star})}\right)_{i_0+\nicefrac{1}{2},j_0}
    \nonumber
    \\
& \quad -  \frac{\varepsilon_3^2}{18h}
\left(\frac{D_x(1-\phi_1^{\star}-\phi_2^{\star})}
 {A_x(1-\phi_1^{\star}-\phi_2^{\star})}\right)_{i_0-\nicefrac{1}{2},j_0}
    \nonumber
    \\
& \quad - \frac{\varepsilon_3^2}{18h}
\left(\frac{D_x(1-\phi_1^{\star}-\phi_2^{\star})}{A_x(1-\phi_1^{\star}-\phi_2^{\star})}\right)_{i_1+\nicefrac{1}{2},j_1}
    \nonumber
    \\
& \quad + \frac{\varepsilon_3^2}{18h}
\left(\frac{D_x(1-\phi_1^{\star}-\phi_2^{\star})}{A_x(1-\phi_1^{\star}-\phi_2^{\star})}\right)_{i_1-\nicefrac{1}{2},j_1}
	\nonumber
	\\
& \leq \frac{4 \varepsilon_3^2 }{9h^2} .
    \nonumber
	\end{align}
The last step above is based on the definitions of $A_x$ and $D_x$,  as well as the fact that $|\frac{a-b}{a+b}|<1$, $\forall a>0, b>0$.

Similarly, for the $Q_9$ terms, we have
    \begin{align}
Q_9(\phi_1^{\star},\phi_2^{\star})_{\vec{\alpha}_0} - Q_9(\phi_1^{\star},\phi_2^{\star})_{\vec{\alpha}_1} & = 2 \varepsilon_3^2
d_y(\kappa(A_y(1-\phi_1^{\star}-\phi_2^{\star}))D_y(1-\phi_1^{\star}-\phi_2^{\star}))_{\vec{\alpha}_0}
    \label{MMC-positive-11}
    \\
& \quad - 2 \varepsilon_3^2
d_y(\kappa(A_y(1-\phi_1^{\star}-\phi_2^{\star}))D_y(1-\phi_1^{\star}-\phi_2^{\star}))_{\vec{\alpha}_1}
    \nonumber
    \\
& \leq \frac{4 \varepsilon_3^2 }{9 h^2}.
    \nonumber
\end{align}
For the numerical solution $\phi_1^n$ at the previous time step, the a-priori assumption $0<\phi_1^n<1$ indicates that
\begin{equation}
-1 \leq (\phi_1^n)_{\vec{\alpha}_0}-(\phi_1^n)_{\vec{\alpha}_1}\leq 1.
\end{equation}
For the last two terms appearing in \eqref{MMC-Positive}, we see that
    \begin{align}
\left(\frac{\partial}{\partial \phi_1}H(\phi_1^n,\phi_2^n)\right)_{\vec{\alpha_0}} -
\left(\frac{\partial}{\partial \phi_1}H(\phi_1^n,\phi_2^n)\right)_{\vec{\alpha_1}} & = - 2 \chi_{13}
[(\phi_1^n)_{\vec{\alpha}_0}-(\phi_1^n)_{\vec{\alpha}_1}]
    \label{MMC-positive-3}
    \\
& \quad + (\chi_{12}-\chi_{13}-\chi_{23})[(\phi_2^n)_{\vec{\alpha}_0}-(\phi_2^n)_{\vec{\alpha}_1}]
    \nonumber
    \\
& \leq \chi_{12} + 3 \chi_{13}+\chi_{23} .
    \nonumber
    \end{align}
Putting everything together, we have
    \begin{align}
\frac{1}{h^2}d_s \mathcal{J}_h^n(\phi_1^\star+s\psi,\phi_2^\star)|_{s=0} & \leq \ln \frac{(g(\delta))^{\nicefrac{1}{M_0}}}{\delta} - \frac{1}{M_0}\ln\overline{\phi_1^0}+\frac{2C_1}{\mathcal{M}_1\dt}
    \nonumber
    \\
& \quad + \frac{2\varepsilon_1^2 }{9h^2} + \frac{10\varepsilon_3^2 }{9h^2} + \chi_{12} + 3
\chi_{13}+\chi_{23}.
    \nonumber
    \end{align}
The following quantity is introduced: 
	\[
D_0 := - \frac{1}{M_0}\ln \overline{\phi_1^0}+\frac{2C_1}{\mathcal{M}_1\dt} + \frac{2\varepsilon_1^2 }{9h^2}+ \frac{10\varepsilon_3^2 }{9h^2} + \chi_{12}+3 \chi_{13}+\chi_{23}.
	\]
Notice that $D_0$ is a constant for a fixed $\dt, h$, while it becomes singular as $\dt, h \rightarrow 0$. For any fixed $\dt,h$, we could choose $g(\delta)$ small enough so that
	\begin{equation}
\ln \frac{(g(\delta))^{\nicefrac{1}{M_0}}}{\delta} + D_0 < 0.
	\label{MMC-positive-delta}
	\end{equation}
In particular, we can choose
	\[
g(\delta) := (\delta \exp(-D_0-1))^{M_0}.
	\]
This in turn shows that
	\[
\frac{1}{h^2}d_s \mathcal{J}_h^n(\phi_1^\star+s\psi,\phi_2^\star)|_{s=0}  <0,  	\]
 provided that $g(\delta)$ satisfies \eqref{MMC-positive-delta}. But, this contradicts the assumption that $\mathcal{J}_h^n$ has a minimum at $(\phi_1^{\star},\phi_2^{\star})$, since the directional derivative is negative in a direction pointing into $(A_{h,\delta})^{\rm o}$, the interior of $A_{h,\delta}$.

Case 2: Using similar arguments, we are able to prove that, the global minimum of $\mathcal{J}_h^n$ over $A_{h,\delta}$ could not occur on the boundary section where $(\phi_2^{\star})_{\vec{\alpha_0}}=g(\delta)$, if $g(\delta)$ is small enough, for any grid index $\vec{\alpha_0}$.

Case 3: Suppose the minimum point $(\phi_1^{\star},\phi_2^{\star})$  satisfies
	\[
(\phi_1^{\star})_{\vec{\alpha_0}} +
(\phi_2^{\star})_{\vec{\alpha_0}}=1-\delta,
	\]
with $\vec{\alpha_0}:=(i_0,j_0)$. We could choose $\delta \in (0, \nicefrac{1}{3}).$ Without loss of generality, it is assumed that $(\phi_1^{\star})_{\vec{\alpha_0}} \geq \frac{1}{3}$. In addition, we see that 
	\[
\frac{1}{N^2}\sum_{i,j=1}^N (\phi_1+\phi_2)_{i,j}=\overline{\phi_1^0}+\overline{\phi_2^0}.
	\]
There exists one grid point $\vec{\alpha_1}:=(i_1,j_1)$, so that $\phi_1^{\star}+\phi_2^{\star}$ reaches the minimum value at $\vec{\alpha_1}$. Then it is obvious that $(\phi_1^{\star})_{\vec{\alpha_1}}+(\phi_2^{\star})_{\vec{\alpha_1}}\leq
\overline{\phi_1^{\star}}+\overline{\phi_2^{\star}}=\overline{\phi_1^0}+\overline{\phi_2^0}$.
In turn, the following directional derivative could be derived:
\begin{align*}
d_s \mathcal{J}_h^n(\phi_1^\star+s\psi,\phi_2^\star)|_{s=0} =& \frac{1}{\mathcal{M}_1\Delta
t}\ciptwo{(-\Delta)^{-1}\left(\phi_1^\star-\phi_1^n \right)}{\psi}\\
&+ \ciptwo{\delta_{\phi_1}G_{h,c}(\phi_1^{\star},\phi_2^{\star})}{\psi}+
\ciptwo{\frac{\partial}{\partial \phi_1}H(\phi_1^n,\phi_2^n)}{\psi} ,
\end{align*}
for any $\psi \in \mathring{{\cal C}}_{\rm per}$. Setting  the direction as
    \[
\psi_{i,j} = \delta_{i,i_0}\delta_{j,j_0} - \delta_{i,i_1}\delta_{j,j_1},
	\]
then the derivative may be expanded as
    \begin{align}
\frac{1}{h^2}d_s \mathcal{J}_h^n(\phi_1^\star+s\psi,\phi_2^\star)|_{s=0} & =
\frac{1}{\mathcal{M}_1\Delta t}(-\Delta)^{-1}\left(\phi_1^\star-\phi_1^n \right)_{\vec{\alpha_0}}
    \label{MMC-Positive-right}
    \\
& \quad - \frac{1}{\mathcal{M}_1\Delta t}(-\Delta)^{-1}\left(\phi_1^\star-\phi_1^n
\right)_{\vec{\alpha_1}}
    \nonumber
    \\
& \quad + \delta_{\phi_1}G_{h,c}(\phi_1^{\star},\phi_2^{\star})_{\vec{\alpha_0}}
- \delta_{\phi_1}G_{h,c}(\phi_1^{\star},\phi_2^{\star})_{\vec{\alpha_1}}
    \nonumber
    \\
& \quad + \left(\frac{\partial}{\partial \phi_1}H(\phi_1^n,\phi_2^n)\right)_{\vec{\alpha_0}}
- \left(\frac{\partial}{\partial \phi_1}H(\phi_1^n,\phi_2^n)\right)_{\vec{\alpha_1}}
    \nonumber
    \\
& = \frac{1}{\mathcal{M}_1\Delta t}(-\Delta)^{-1}\left(\phi_1^\star-\phi_1^n \right)_{\vec{\alpha_0}}
    \nonumber
    \\
& \quad - \frac{1}{\mathcal{M}_1\Delta t}(-\Delta)^{-1}\left(\phi_1^\star-\phi_1^n
\right)_{\vec{\alpha_1}}
    \nonumber
    \\
& \quad + \sumi Q_\ell(\phi_1^{\star},\phi_2^{\star})_{\vec{\alpha}_0} -
\sumi Q_\ell(\phi_1^{\star},\phi_2^{\star})_{\vec{\alpha}_1}
    \nonumber
    \\
& \quad +\left(\frac{\partial}{\partial \phi_1}H(\phi_1^n,\phi_2^n)\right)_{\vec{\alpha_0}}
-\left(\frac{\partial}{\partial \phi_1}H(\phi_1^n,\phi_2^n)\right)_{\vec{\alpha_1}}.
    \nonumber
\end{align}
For the first and second terms appearing in \eqref{MMC-Positive-right}, we apply  \Cref{MMC-positivity-Lem-0} and obtain
\begin{equation}
- 2 C_1 \le
 (-\Delta)^{-1}\left(\phi_1^\star-\phi_1^n \right)_{\vec{\alpha_0}}
-(-\Delta)^{-1}\left(\phi_1^\star-\phi_1^n \right)_{\vec{\alpha_1}}
\le  2 C_1 ,
	\label{MMC-positive-right-1}
	\end{equation}
    \begin{align}
Q_1(\phi_1^{\star},\phi_2^{\star})_{\vec{\alpha}_0} -
Q_1(\phi_1^{\star},\phi_2^{\star})_{\vec{\alpha}_1} & =
\frac{\partial}{\partial \phi_1} S(\phi_1^{\star},\phi_2^{\star})_{\vec{\alpha}_0}
- \frac{\partial}{\partial \phi_1} S(\phi_1^{\star},\phi_2^{\star})_{\vec{\alpha}_1}
    \label{MMC-positive-right-2}
    \\
& = \left(\frac{1}{M_0}\ln \frac{\alpha (\phi_1^{\star})}{M_0}-
\ln(1-\phi_1^{\star}-\phi_2^{\star})\right)_{\vec{\alpha}_0}
    \nonumber
    \\
& \quad - \left(\frac{1}{M_0}\ln \frac{\alpha (\phi_1^{\star})}{M_0} -
\ln(1-\phi_1^{\star}-\phi_2^{\star})\right)_{\vec{\alpha}_1}
    \nonumber
    \\
& = \left( \ln
\frac{(\phi_1^{\star})^{\nicefrac{1}{M_0}}}{1-\phi_1^{\star}-\phi_2^{\star}}\right)_{\vec{\alpha}_0}
- \left( \ln
\frac{(\phi_1^{\star})^{\nicefrac{1}{M_0}}}{1-\phi_1^{\star}-\phi_2^{\star}}\right)_{\vec{\alpha}_1}
    \nonumber
    \\
& \geq \ln \frac{(\frac{1}{3})^{\nicefrac{1}{M_0}}}{\delta} -
\ln \frac{1}{1-\overline{\phi_1^0}-\overline{\phi_2^0}} .
    \nonumber
    \end{align}
The last step above comes from the facts that $(\phi_1^{\star})_{\vec{\alpha_0}} \geq \frac{1}{3}$,
$(\phi_1^{\star})_{\vec{\alpha_1}}+(\phi_2^{\star})_{\vec{\alpha_1}}\leq
\overline{\phi_1^0}+\overline{\phi_2^0}$, and $(\phi_1^{\star})_{\vec{\alpha_1}} < 1$ .

For the $Q_2$ and $Q_4$ terms, we have
    \begin{align}
Q_2(\phi_1^{\star},\phi_2^{\star})_{\vec{\alpha}_0} -
Q_2(\phi_1^{\star},\phi_2^{\star})_{\vec{\alpha}_1} & = \varepsilon_1^2
a_x(\kappa'(A_x\phi_1^{\star})(D_x\phi_1^{\star})^2)_{\vec{\alpha}_0}
    \label{MMC-positive-right-4}
    \\
& \quad - \varepsilon_1^2
a_x(\kappa'(A_x\phi_1^{\star})(D_x\phi_1^{\star})^2)_{\vec{\alpha}_1}
    \nonumber
    \\
& \geq \varepsilon_1^2 a_x(\kappa'(A_x\phi_1^{\star})(D_x\phi_1^{\star})^2)_{\vec{\alpha}_0}
    \nonumber
    \\
& \geq -\frac{\varepsilon_1^2 }{9h^2} ,
    \nonumber
\end{align}
in which the second step comes from the fact that
$-\varepsilon_1^2 a_x(\kappa'(A_x\phi_1^{\star})(D_x\phi_1^{\star})^2)_{\vec{\alpha}_1}\geq 0$,
since $\kappa'(\phi)= -\frac{1}{36\phi^2}<0$, and the last step is based on the definitions of
$\kappa'(\phi)$, $a_x$, $A_x$, and $D_x$,  as well as the fact that $|\frac{a-b}{a+b}|<1$,
$\forall a>0, b>0$.

For the $Q_4$ terms, similarly, we get 
    \begin{align}
Q_4(\phi_1^{\star},\phi_2^{\star})_{\vec{\alpha}_0} -
Q_4(\phi_1^{\star},\phi_2^{\star})_{\vec{\alpha}_1} & = \varepsilon_1^2
a_y(\kappa'(A_y\phi_1^{\star})(D_y\phi_1^{\star})^2)_{\vec{\alpha}_0}
    \label{MMC-positive-right-5}
    \\
& \quad - \varepsilon_1^2 a_y(\kappa'(A_y\phi_1^{\star})(D_y\phi_1^{\star})^2)_{\vec{\alpha}_1}
    \nonumber
    \\
& \geq -\frac{\varepsilon_1^2 }{9h^2} .
    \nonumber
\end{align}
The $Q_3$ and $Q_5$ terms could be analyzed as follows
    \begin{align}
Q_3(\phi_1^{\star},\phi_2^{\star})_{\vec{\alpha}_0} -
Q_3(\phi_1^{\star},\phi_2^{\star})_{\vec{\alpha}_1} & = -2\varepsilon_1^2
d_x(\kappa(A_x\phi_1^{\star}) D_x\phi_1^{\star})_{\vec{\alpha}_0}
    \label{MMC-positive-right-6}
    \\
& \quad + 2\varepsilon_1^2
d_x(\kappa(A_x\phi_1^{\star}) D_x\phi_1^{\star})_{\vec{\alpha}_1}
    \nonumber
    \\
& \geq -\frac{4\varepsilon_1^2 }{9h^2} ,
    \nonumber
    \\
Q_5(\phi_1^{\star},\phi_2^{\star})_{\vec{\alpha}_0} -
Q_5(\phi_1^{\star},\phi_2^{\star})_{\vec{\alpha}_1} & = -2\varepsilon_1^2
d_y(\kappa(A_y\phi_1^{\star}) D_y\phi_1^{\star})_{\vec{\alpha}_0}
    \label{MMC-positive-right-7}
    \\
& \quad + 2\varepsilon_1^2 d_y(\kappa(A_y\phi_1^{\star}) D_y\phi_1^{\star})_{\vec{\alpha}_1}
    \nonumber
    \\
& \geq -\frac{4\varepsilon_1^2 }{9h^2} .    \nonumber
\end{align}
The estimates for $Q_6$ and $Q_8$ terms are similar:
    \begin{align}
Q_6(\phi_1^{\star},\phi_2^{\star})_{\vec{\alpha}_0} -
Q_6(\phi_1^{\star},\phi_2^{\star})_{\vec{\alpha}_1} & = -\varepsilon_3^2
a_x\left(\kappa'(A_x(1-\phi_1^{\star}-\phi_2^{\star}))(D_x(1-\phi_1^{\star}-\phi_2^{\star}))^2
\right)_{\vec{\alpha}_0}
    \label{MMC-positive-right-8}
    \\
& \quad + \varepsilon_3^2
a_x\left(\kappa'(A_x(1-\phi_1^{\star}-\phi_2^{\star}))(D_x(1-\phi_1^{\star}-\phi_2^{\star}))^2\right)_{\vec{\alpha}_1}
    \nonumber
    \\
& \geq + \varepsilon_3^2
a_x\left(\kappa'(A_x(1-\phi_1^{\star}-\phi_2^{\star}))(D_x(1-\phi_1^{\star}-\phi_2^{\star}))^2\right)_{\vec{\alpha}_1}
    \nonumber
    \\
& \geq - \frac{\varepsilon_3^2 }{9h^2} ,
    \nonumber
    \\
Q_8(\phi_1^{\star},\phi_2^{\star})_{\vec{\alpha}_0} -
Q_8(\phi_1^{\star},\phi_2^{\star})_{\vec{\alpha}_1} & = -\varepsilon_3^2
a_y\left(\kappa'(A_y(1-\phi_1^{\star}-\phi_2^{\star}))(D_y(1-\phi_1^{\star}-\phi_2^{\star}))^2\right)_{\vec{\alpha}_0}
    \label{MMC-positive-right-9}
    \\
& \quad + \varepsilon_3^2
a_y\left(\kappa'(A_y(1-\phi_1^{\star}-\phi_2^{\star}))(D_y(1-\phi_1^{\star}-\phi_2^{\star}))^2\right)_{\vec{\alpha}_1}
    \nonumber
    \\
& \geq + \varepsilon_3^2
a_y\left(\kappa'(A_y(1-\phi_1^{\star}-\phi_2^{\star}))(D_y(1-\phi_1^{\star}-\phi_2^{\star}))^2\right)_{\vec{\alpha}_1}
    \nonumber
    \\
& \geq -\frac{\varepsilon_3^2 }{9h^2} .
    \nonumber
\end{align}
For the $Q_7$ terms, we see that
    \begin{align}
Q_7(\phi_1^{\star},\phi_2^{\star})_{\vec{\alpha}_0} -
Q_7(\phi_1^{\star},\phi_2^{\star})_{\vec{\alpha}_1} & = 2\varepsilon_3^2
d_x(\kappa(A_x(1-\phi_1^{\star}-\phi_2^{\star}))D_x(1-\phi_1^{\star}-\phi_2^{\star}))_{\vec{\alpha}_0}
    \label{MMC-positive-right-10}
    \\
& \quad - 2\varepsilon_3^2 d_x(\kappa(A_x(1-\phi_1^{\star}-\phi_2^{\star})) D_x(1-\phi_1^{\star}-\phi_2^{\star}) )_{\vec{\alpha}_1}
    \nonumber
    \\
& \geq 0 .
    \nonumber
	\end{align}
The last step above comes from the fact that
	\begin{align*}
(D_x (1-\phi_1^{\star}-\phi_2^{\star}))_{i_0-\nicefrac{1}{2},j_0} & \leq 0,
	\\
(D_x(1-\phi_1^{\star}-\phi_2^{\star}))_{i_0+\nicefrac{1}{2},j_0} & \geq 0,
	\\
(D_x(1-\phi_1^{\star}-\phi_2^{\star}))_{i_1-\nicefrac{1}{2},j_1} & \geq 0,
	\\
(D_x(1-\phi_1^{\star}-\phi_2^{\star}))_{i_1+\nicefrac{1}{2},j_1} &\leq 0.
	\end{align*}
Similarly, for the $Q_9$ terms, we see that
    \begin{align}
Q_9(\phi_1^{\star},\phi_2^{\star})_{\vec{\alpha}_0} -
Q_9(\phi_1^{\star},\phi_2^{\star})_{\vec{\alpha}_1} & = + 2\varepsilon_3^2
d_y(\kappa(A_y(1-\phi_1^{\star}-\phi_2^{\star}))D_y(1-\phi_1^{\star}-\phi_2^{\star}))_{\vec{\alpha}_0}
    \label{MMC-positive-right-11}
    \\
& \quad - 2\varepsilon_3^2
d_y(\kappa(A_y(1-\phi_1^{\star}-\phi_2^{\star}))D_y(1-\phi_1^{\star}-\phi_2^{\star}))_{\vec{\alpha}_1}
    \nonumber
    \\
& \geq 0 ,
    \nonumber
\end{align}
For the numerical solution $\phi_1^n$ at the previous time step, similar bounds could be derived for the last two terms appearing in \eqref{MMC-Positive-right}
    \begin{align}
\left(\frac{\partial}{\partial \phi_1}H(\phi_1^n,\phi_2^n)\right)_{\vec{\alpha_0}}
-\left(\frac{\partial}{\partial \phi_1}H(\phi_1^n,\phi_2^n)\right)_{\vec{\alpha_1}} & =
- 2 \chi_{13} [(\phi_1^n)_{\vec{\alpha}_0}-(\phi_1^n)_{\vec{\alpha}_1}]
    \label{MMC-positive-right-3}
    \\
& \quad + (\chi_{12}-\chi_{13}-\chi_{23})[(\phi_2 ^n)_{\vec{\alpha}_0}-(\phi_2
^n)_{\vec{\alpha}_1}]
    \nonumber
    \\
& \geq -\chi_{12}-3 \chi_{13}-\chi_{23} .
    \nonumber
    \end{align}
Putting estimates together, we arrive at 
	\begin{align}
\frac{1}{h^2}d_s \mathcal{J}_h^n(\phi_1^\star+s\psi,\phi_2^\star)|_{s=0} & \geq \ln \frac{(\frac{1}{3})^{\nicefrac{1}{M_0}}}{\delta} -
\ln \frac{1}{1-\overline{\phi_1^0}-\overline{\phi_2^0}}-\frac{2C_1}{\mathcal{M}_1\dt}
    \nonumber
    \\
& \quad - \frac{10\varepsilon_1^2}{9h^2} - \frac{2\varepsilon_3^2}{9h^2} - \chi_{12}-3 \chi_{13}-\chi_{23}.
    \nonumber
	\end{align}
The following quantity is introduced: 
	\[
D_1 := \frac{1}{M_0} \ln3 +\ln
\frac{1}{1-\overline{\phi_1^0}-\overline{\phi_2^0}}+\frac{2C_1}{\mathcal{M}_1\dt} +  \frac{10\varepsilon_1^2}{9h^2}+\frac{2\varepsilon_3^2}{9h^2} + \chi_{12}+3 \chi_{13}+\chi_{23},
	\]
For any fixed $\dt,h$, we could choose $\delta$ small enough so that
	\begin{equation}
-\ln \delta- D_1 > 0,
	\label{MMC-positive-right-delta}
	\end{equation}
in particular, $\delta = \min \{\exp(-D_1-1), \nicefrac{1}{3} \}$. This in turn shows that
\[
\frac{1}{h^2}d_s \mathcal{J}_h^n(\phi_1^\star+s\psi,\phi_2^\star)|_{s=0} > 0,
	\]
provided that $\delta$ satisfies  \eqref{MMC-positive-right-delta}. This contradicts the assumption that $\mathcal{J}_h^n$ has a minimum at
$(\phi_1^{\star},\phi_2^{\star})$.

Case 4: Using similar arguments, we can also prove that, the global minimum of $\mathcal{J}_h^n$ over $A_{h,\delta}$ could not occur on the boundary section where $(\phi_1^{\star})_{\vec{\alpha_0}} + (\phi_2^{\star})_{\vec{\alpha_0}} = \delta$, if $\delta$ is small enough, for any grid index $\vec{\alpha_0}$. The details are left to the interested readers.

Finally, a combination of these four cases reveals that, the global minimizer of $\mathcal{J}_h^n(\phi_1,\phi_2)$ could only possibly occur at interior point of $( A_{h,\delta} )^0 \subset (A_h )^0$. We conclude that there must be a solution $(\phi_1, \phi_2) \in (A_h )^0$ that minimizes $\mathcal{J}_h^n(\phi_1,\phi_2)$ over $A_h$, which is equivalent to the numerical solution of~\eqref{Full-discrete-1} -- \eqref{Full-discrete-2}. The existence of the numerical solution is established.

In addition, since $\mathcal{J}_h^n(\phi_1,\phi_2)$ is a strictly convex function over $A_h$, the uniqueness analysis for this numerical solution is straightforward. The proof of Theorem~\ref{MMC-positivity} is complete.
	\end{proof}

\begin{remark}
For the two-phase MMC model with Flory-Huggins-deGennes free energy density, the energy functional could be represented in terms of a single phase variable, and the positivity-preserving property has been established for the energy stable numerical schemes~\cite{Dong2019a, Dong2020a}. However, a theoretical justification of this property for the ternary MMC system is much more complicated, due to the mixed terms involved in the highly nonlinear and singular surface diffusion part. For example, to overcome the difficulty associated with the coupling between the $\phi_1$ and $\phi_3$ variables in the surface diffusion energy, we have to set different lower and upper bounds for the two variables in the set-up of $A_{h,\delta}$, and a nonlinear scaling (such as~\eqref{MMC-positive-delta}) between $g (\delta)$ and $\delta$ is needed, which turns out to be a crucial step in the nonlinear analysis.
\end{remark}
\section{Unconditional energy stability}\label{sec:unconditional energy
stability}\setcounter{equation}{0}
\begin{theorem}(Energy stability)
The fully discrete scheme~\eqref{Full-discrete-1} -- \eqref{Full-discrete-2} is unconditionally energy stable, i.e., for any time step size $\dt>0$, we have
\begin{equation}
G_h(\phi_1^{n+1},\phi_2^{n+1}) \leq G_h(\phi_1^n,\phi_2^n). \label{Full-energy-stable}
\end{equation}
\end{theorem}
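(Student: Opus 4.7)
The proof assembles ingredients that are essentially already in place. The plan is to apply the discrete convex-concave inequality \eqref{Full-energy-inequality} with $\vec{\phi}=(\phi_1^{n+1},\phi_2^{n+1})$ and $\vec{\psi}=(\phi_1^n,\phi_2^n)$, then recognize the quantities appearing on the right-hand side as exactly the chemical potentials $\mu_i^{n+1}$ defined by the scheme, and finally use the evolution equations together with summation-by-parts to reveal non-positive dissipation.

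First I would invoke Theorem~\ref{MMC-positivity}, which guarantees that $(\phi_1^{n+1},\phi_2^{n+1})\in \vec{\mathcal{C}}_{\rm per}^{\mathcal{G}}$, so that both iterates lie in the admissible Gibbs-triangle set and the inequality \eqref{Full-energy-inequality} legitimately applies. Substituting the explicit form of the numerical chemical potentials
\[
\mu_i^{n+1} = \delta_{\phi_i}G_{h,c}(\phi_1^{n+1},\phi_2^{n+1}) - \delta_{\phi_i}G_{h,e}(\phi_1^n,\phi_2^n), \quad i=1,2,
\]
into the right-hand side of \eqref{Full-energy-inequality} immediately gives
\[
G_h(\phi_1^{n+1},\phi_2^{n+1}) - G_h(\phi_1^n,\phi_2^n) \le \ciptwo{\mu_1^{n+1}}{\phi_1^{n+1}-\phi_1^n} + \ciptwo{\mu_2^{n+1}}{\phi_2^{n+1}-\phi_2^n}.
\]

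Next, the update equations \eqref{Full-discrete-1} and \eqref{Full-discrete-2} allow one to replace $\phi_i^{n+1}-\phi_i^n$ by $\dt\,\mathcal{M}_i\Dh\mu_i^{n+1}$. Applying the discrete summation-by-parts identity in \Cref{lemma1} with $\mathcal{D}\equiv 1$ yields
\[
\ciptwo{\mu_i^{n+1}}{\Dh\mu_i^{n+1}} = -\eipvec{\nabh\mu_i^{n+1}}{\nabh\mu_i^{n+1}} = -\nrm{\nabh\mu_i^{n+1}}_2^2,
\]
so that
\[
\ciptwo{\mu_i^{n+1}}{\phi_i^{n+1}-\phi_i^n} = -\dt\,\mathcal{M}_i\,\nrm{\nabh\mu_i^{n+1}}_2^2 \le 0,
\]
using $\mathcal{M}_i>0$. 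Combining the two indices and the previous inequality gives $G_h(\phi_1^{n+1},\phi_2^{n+1}) - G_h(\phi_1^n,\phi_2^n) \le 0$, valid for any $\dt>0$, which is the claimed unconditional energy stability.

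The only step that requires care is justifying the applicability of \eqref{Full-energy-inequality}, since that inequality rests on the Gibbs-triangle admissibility of both iterates; but this is precisely what was proved in \Cref{MMC-positivity}, so the energy-stability theorem follows from the two key structural results (convex-concave decomposition plus positivity preservation) without any time-step restriction. No new calculation beyond the summation-by-parts identity is needed.
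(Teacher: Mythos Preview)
Your proposal is correct and follows essentially the same route as the paper: apply the discrete convex--concave inequality \eqref{Full-energy-inequality} with $\vec{\phi}=(\phi_1^{n+1},\phi_2^{n+1})$, $\vec{\psi}=(\phi_1^n,\phi_2^n)$, identify the right-hand side as $\ciptwo{\mu_1^{n+1}}{\phi_1^{n+1}-\phi_1^n}+\ciptwo{\mu_2^{n+1}}{\phi_2^{n+1}-\phi_2^n}$, substitute the scheme, and use the summation-by-parts identity from \Cref{lemma1} to obtain $-\mathcal{M}_1\dt\nrm{\nabh\mu_1^{n+1}}_2^2-\mathcal{M}_2\dt\nrm{\nabh\mu_2^{n+1}}_2^2\le 0$. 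Your explicit invocation of \Cref{MMC-positivity} to ensure admissibility of the iterates is a welcome clarification that the paper leaves implicit.
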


	\begin{proof}
Let $\vec{\phi}=(\phi_1^{n+1},\phi_2^{n+1})$, and $\vec{\psi}=(\phi_1^n,\phi_2^n)$ in the \eqref{Full-energy-inequality}. Applying the fully discrete scheme~\eqref{Full-discrete-1} -- \eqref{Full-discrete-2} and Lemma \ref{lemma1}, we have
     \begin{align*}
& \hspace{-0.5in} G_h(\phi_1^{n+1},\phi_2^{n+1})-G_h(\phi_1^n,\phi_2^n)
 	\\
& \leq \ciptwo{\delta_{\phi_1} G_{h,c}(\phi_1^{n+1},\phi_2^{n+1})-\delta_{\phi_1} G_{h,e}(\phi_1^n,\phi_2^n)}{\phi_1^{n+1}-\phi_1^n}
	\\
& \quad +\ciptwo{\delta_{\phi_2} G_{h,c}(\phi_1^{n+1},\phi_2^{n+1})-\delta_{\phi_2} G_{h,e}(\phi_1^n,\phi_2^n)}{\phi_2^{n+1}-\phi_2^n}
	\\
& =  \ciptwo{\mu_1^{n+1}}{\phi_1^{n+1}-\phi_1^n}+\ciptwo{\mu_2^{n+1}}{\phi_2^{n+1}-\phi_2^n}
	\\
& =\mathcal{M}_1 \ciptwo{\mu_1^{n+1}}{\dt \Dh \mu_1^{n+1}}+\mathcal{M}_2 \ciptwo{\mu_2^{n+1}}{\dt \Dh \mu_2^{n+1}}
	\\
& = -\mathcal{M}_1 \dt\|\nabla_h \mu_1^{n+1}\|_2^2-\mathcal{M}_2 \dt\|\nabla_h \mu_2^{n+1}\|_2^2
	\\
& \leq 0 .
    \end{align*}
	\end{proof}
%
\section{Numerical results}\label{sec:numerical results}\setcounter{equation}{0}
In this section, we present several numerical experiments based on the proposed scheme. The nonlinear Full Approximation Scheme (FAS) multigrid method is used for solving the semi-implicit numerical scheme \eqref{Full-discrete-1} -- \eqref{Full-discrete-2}. The details are similar to earlier works~\cite{baskaran13a, chen19b, Dong2018Convergence, feng2018bsam, guo16, hu09, wise10}, etc. We take the domain as $\Omega = [0,64]^2$, fix the space resolution $N = 256$ and choose the parameters in the model as $M_0 = 0.16, N_0 = 5.12,  \chi_{12} =4, \chi_{13} = 10, \chi_{23} = 1.6$ and $ \mathcal{M}_1 = \mathcal{M}_2 = 1.0$.
\begin{example}\label{example 1}
The initial data is set as
\begin{eqnarray}\label{eqn:init1}
&\phi_1^0(x,y) = 0.1+0.01\cos\big(3\pi x/32\big) \cos\big(3\pi y/32\big),\\
&\phi_2^0(x,y) = 0.5+0.01\cos\big(3\pi x/32\big) \cos\big(3\pi y/32\big).\nonumber
\end{eqnarray}
\end{example}
This example is designed to study the numerical accuracy in time. Since the exact solution is unknown, we treat the numerical solution obtained by $\dt = 1.0\times 10^{-6}$ as the ``exact solution" to calculate the error at the final time. The $\ell^2$ and $\ell^\infty$ errors for $\phi_1$~and $\phi_2$ are displayed in \Cref{tab:cov1}~and \Cref{tab:cov2}, respectively.
\begin{table}
\begin{center}
\begin{tabular}{|c|c|c|c|c|}
\hline $\Delta t$& $8\delta t$& $4\delta t$& $2\delta t$& $\delta t$  \\
\hline $\ell^2$-error-$\phi_1$& $9.5934 \times 10^{-8}$&$ 4.7472 \times 10^{-8}$ &$ 2.3249 \times
10^{-8}$
            &$1.1140 \times 10^{-8}$\\
\hline Rate&-& 1.0150  &   1.0299  & 1.0615\\
\hline $\ell^2$-error-$\phi_2$& $7.0928 \times 10^{-7}$&$ 3.5108 \times 10^{-7}$ &$ 1.7196 \times
10^{-7}$
            &$8.2400 \times 10^{-8}$\\
\hline Rate&-& 1.0146  &   1.0297  & 1.0614
\\
\hline
\end{tabular}
\caption{The $\ell^2$ error and convergence rate for $\phi_1$ and $\phi_2$. The initial data are defined in \eqref{eqn:init1}. The parameters are given by: $T = 0.8$, $\delta t = 1.25\times 10^{-5}$ and $\varepsilon_1=\varepsilon_2=\varepsilon_3=1.0$.} \label{tab:cov1}
\end{center}
\end{table}
\begin{table}
\begin{center}
\begin{tabular}{|c|c|c|c|c|}
\hline $\Delta t$& $8\delta t$& $4\delta t$& $2\delta t$& $\delta t$  \\
\hline $\ell^\infty$-error-$\phi_1$& $1.9507 \times 10^{-7}$&$ 9.6531 \times 10^{-8}$ &$ 4.7275 \times
10^{-8}$
            &$2.2654 \times 10^{-8}$\\
\hline Rate&-& 1.0150  &   1.0299  & 1.0613\\
\hline $\ell^\infty$-error-$\phi_2$& $1.4499 \times 10^{-6}$&$ 7.1765 \times 10^{-7}$ &$ 3.5151 \times
10^{-7}$
            &$1.6844 \times 10^{-7}$\\
\hline Rate&-& 1.0146  &   1.0297  & 1.0614
\\
\hline
\end{tabular}
\caption{The $\ell^\infty$ error and convergence rate for $\phi_1$ and $\phi_2$, with the same initial data and physical parameters as in Table~\ref{tab:cov1}.  }
\label{tab:cov2}
\end{center}
\end{table}
In addition, the energy evolution of the numerical solution with $\dt = 1.0\times 10^{-4}$ is illustrated in~\Cref{fig:coslong_energy}, which indicates a clear energy decay. We also present the error evolution of the total mass of $\phi_1$ and $\phi_2$ in~\cref{fig:cosmass}. In \Cref{fig:coslong}, the snapshot plots of $\phi_1$, $\phi_2$ and $\phi_3$ at a sequence of time instants are displayed, to make a comparison with the existing binary MMC results. Moreover, the maximum values and minimum values of $\phi_1$, $\phi_2$ and $\phi_1+\phi_2$ are presented in \Cref{fig:cosmaxmin} and \Cref{fig:coslong_maxminadd}.
\begin{figure}[!htp]
\begin{center}	
\includegraphics[width=2.5in]{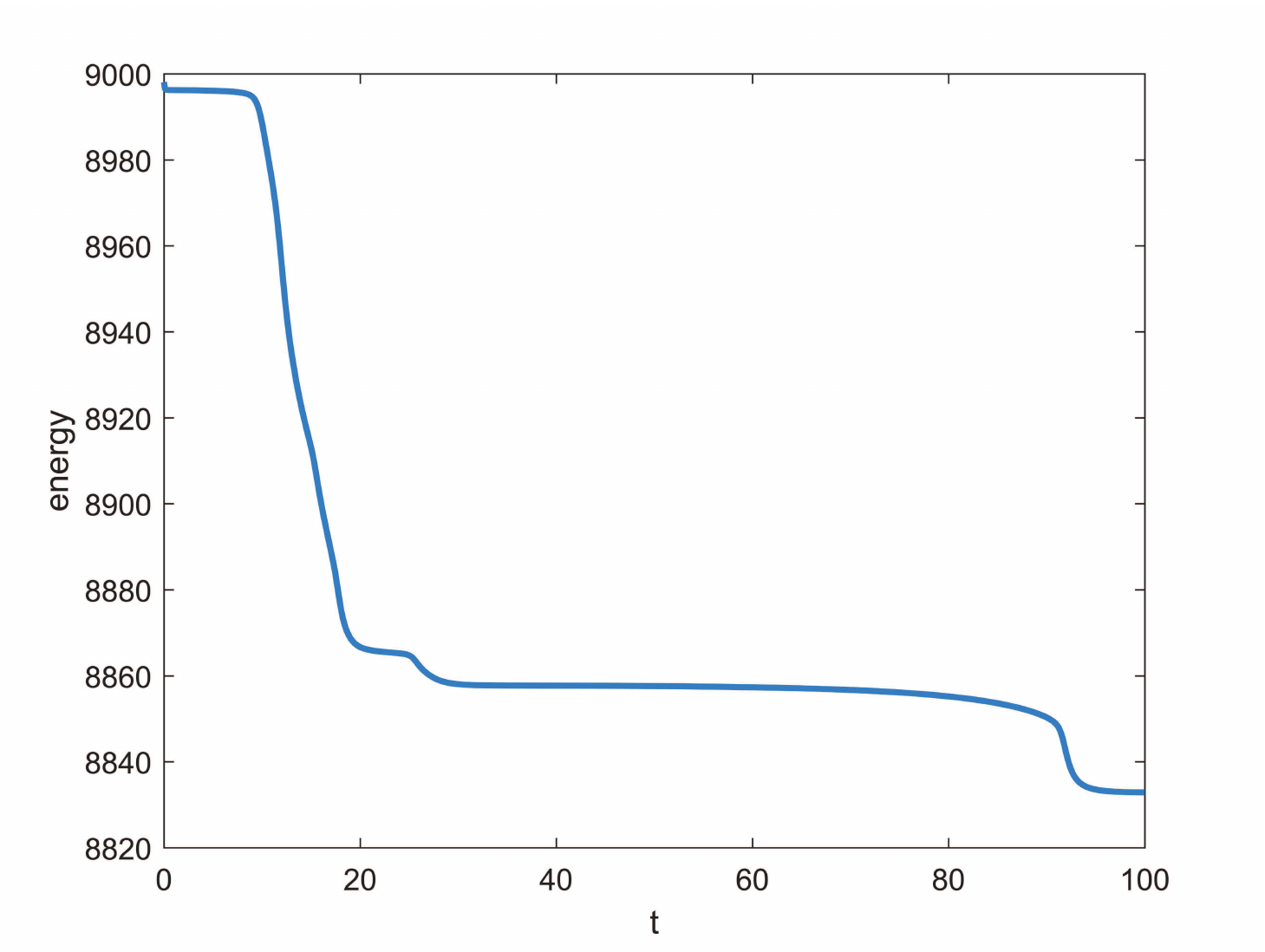}
	\caption{Example \ref{example 1}: Evolution of the energy over time, $\dt = 1.0 \times 10^{-4}$. }\label{fig:coslong_energy}
\end{center}
\end{figure}
\begin{figure}[ht]
	\begin{center}
		\begin{subfigure}{}
			\includegraphics[width=2.2in]{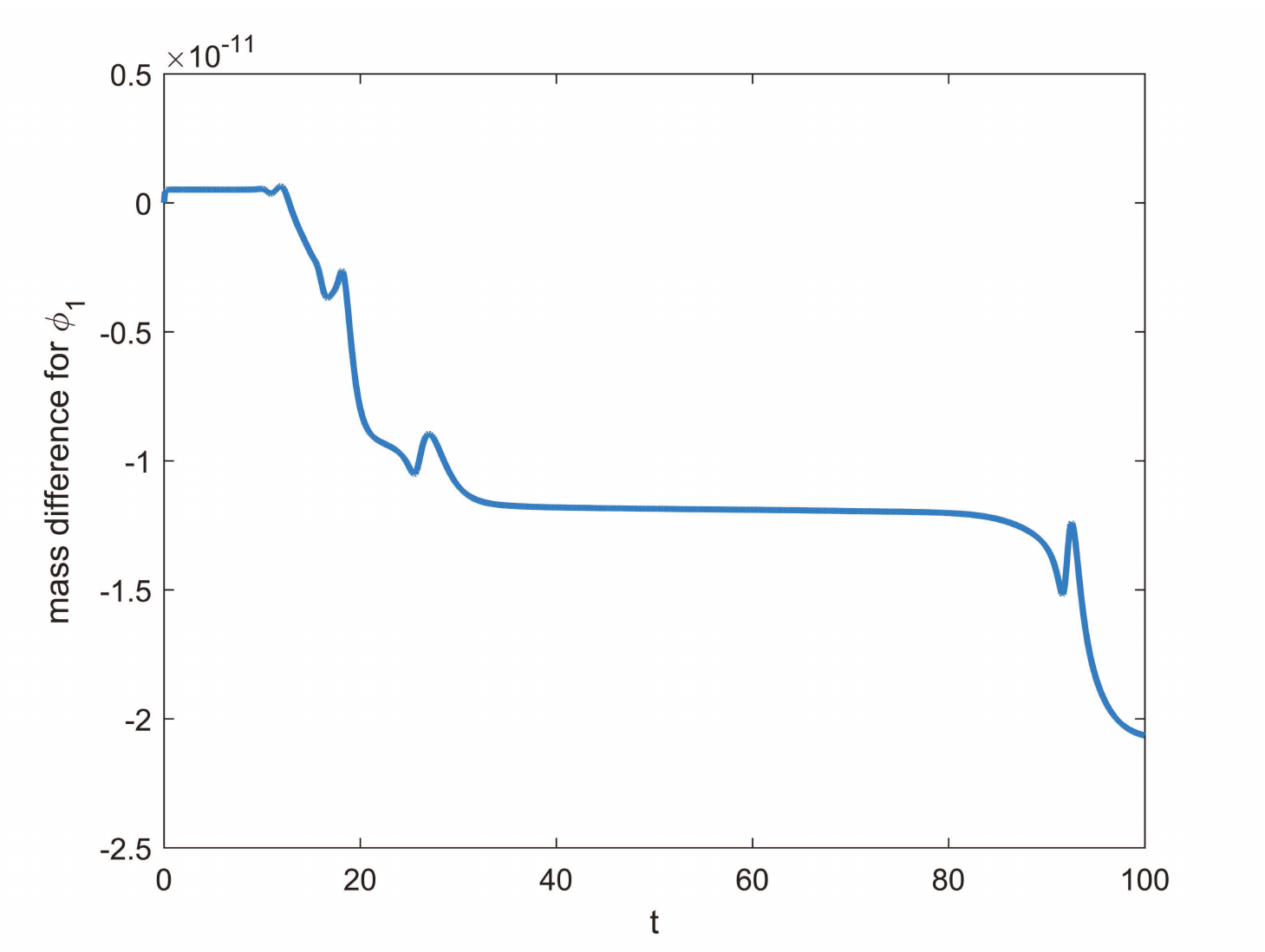}
		\end{subfigure}
		\begin{subfigure}{}
			\includegraphics[width=2.2in]{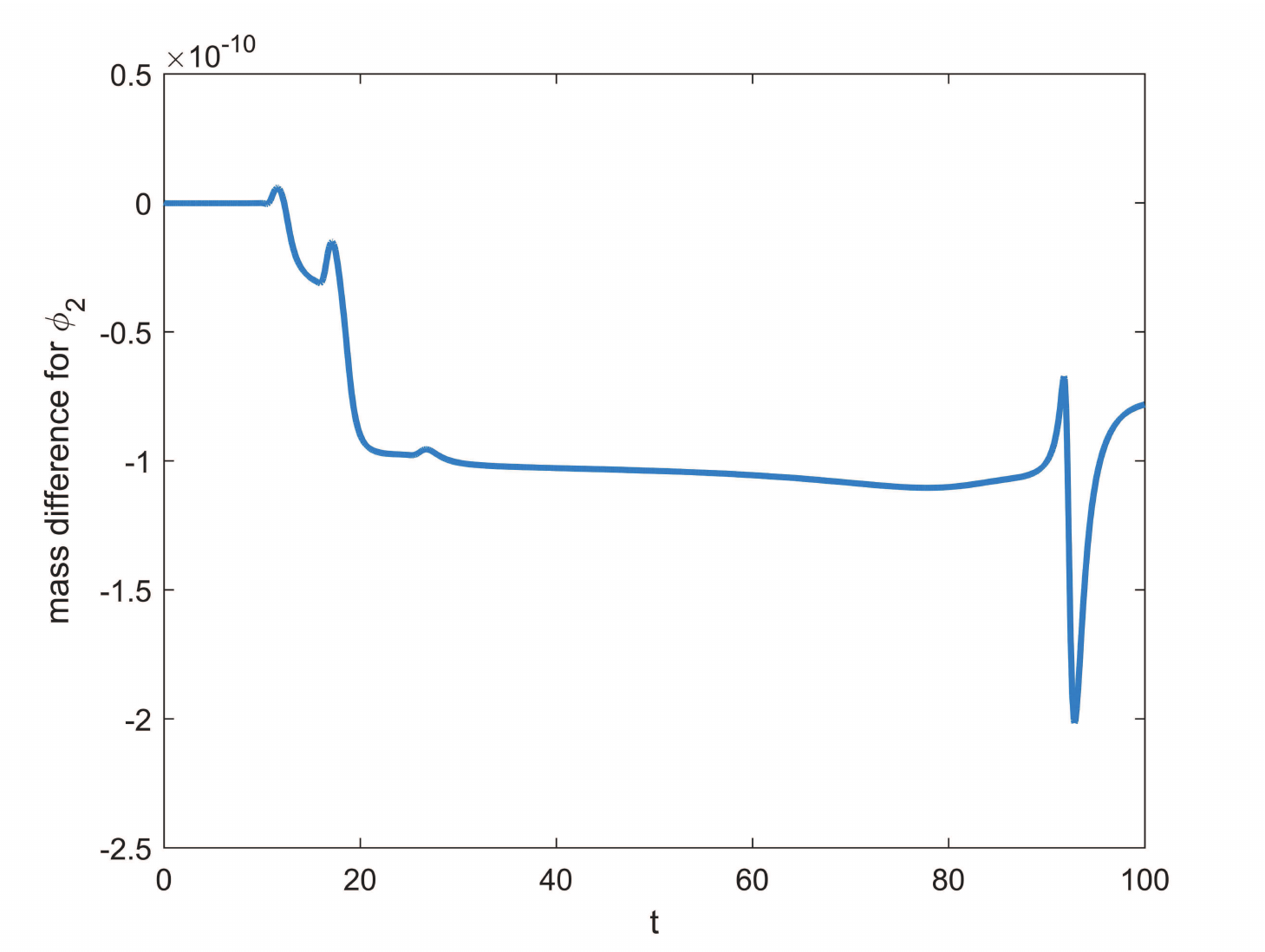}
		\end{subfigure}
	\end{center}
	\caption{Example \ref{example 1}: The error developments of the total mass for $\phi_1$ and $\phi_2$, respectively. }
	\label{fig:cosmass}
\end{figure}
\begin{figure}[!htp]
\begin{center}	
\includegraphics[width=4.5in]{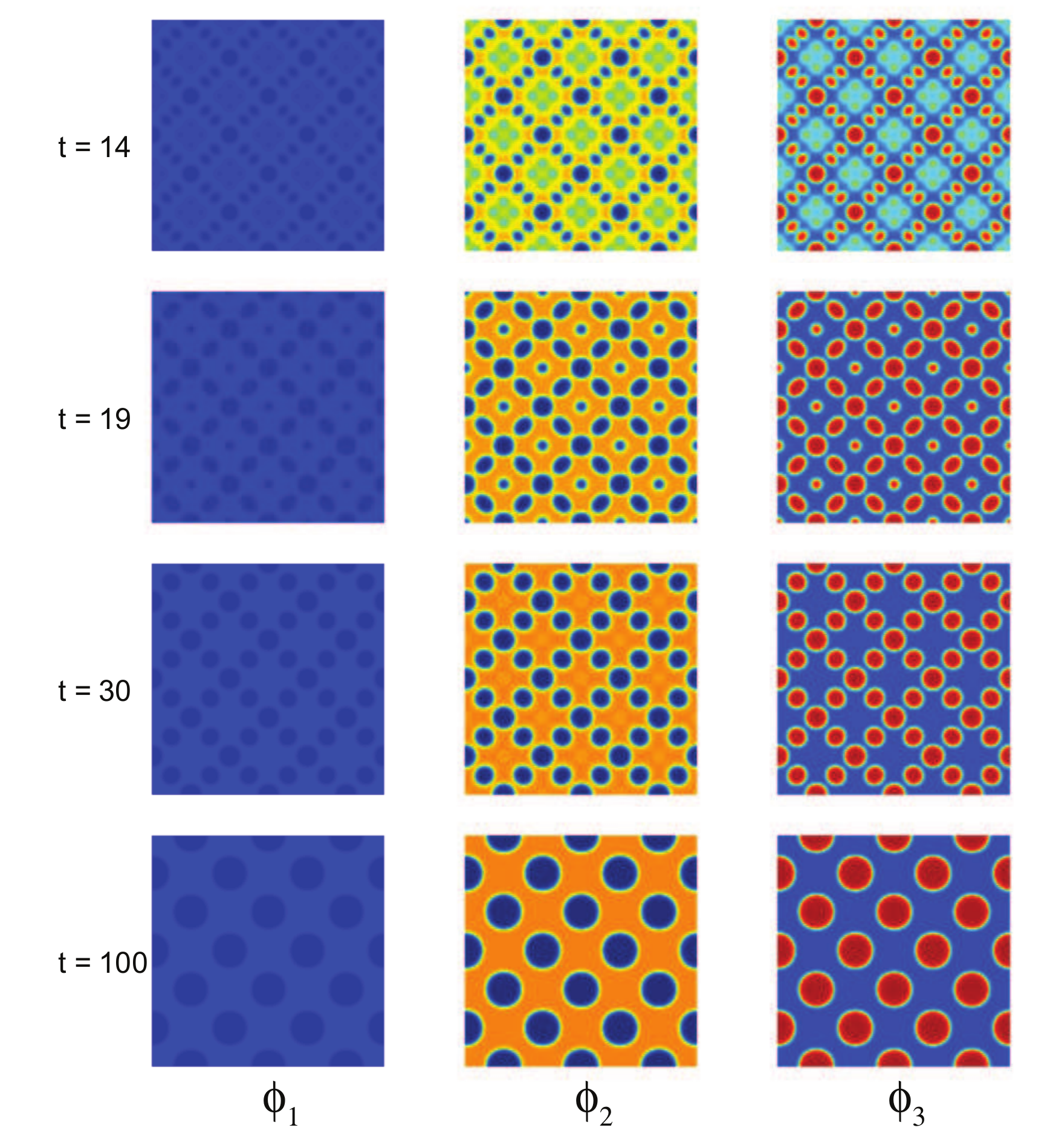}
	\caption{Example \ref{example 1}: Evolution of three phase variables at $t = 14, 19, 30$ and $100$. The time step size is taken as $\dt = 1.0 \times 10^{-4}$. }\label{fig:coslong}
\end{center}
\end{figure}
\begin{figure}[ht]
	\begin{center}
		\begin{subfigure}{}
			\includegraphics[width=2.2in]{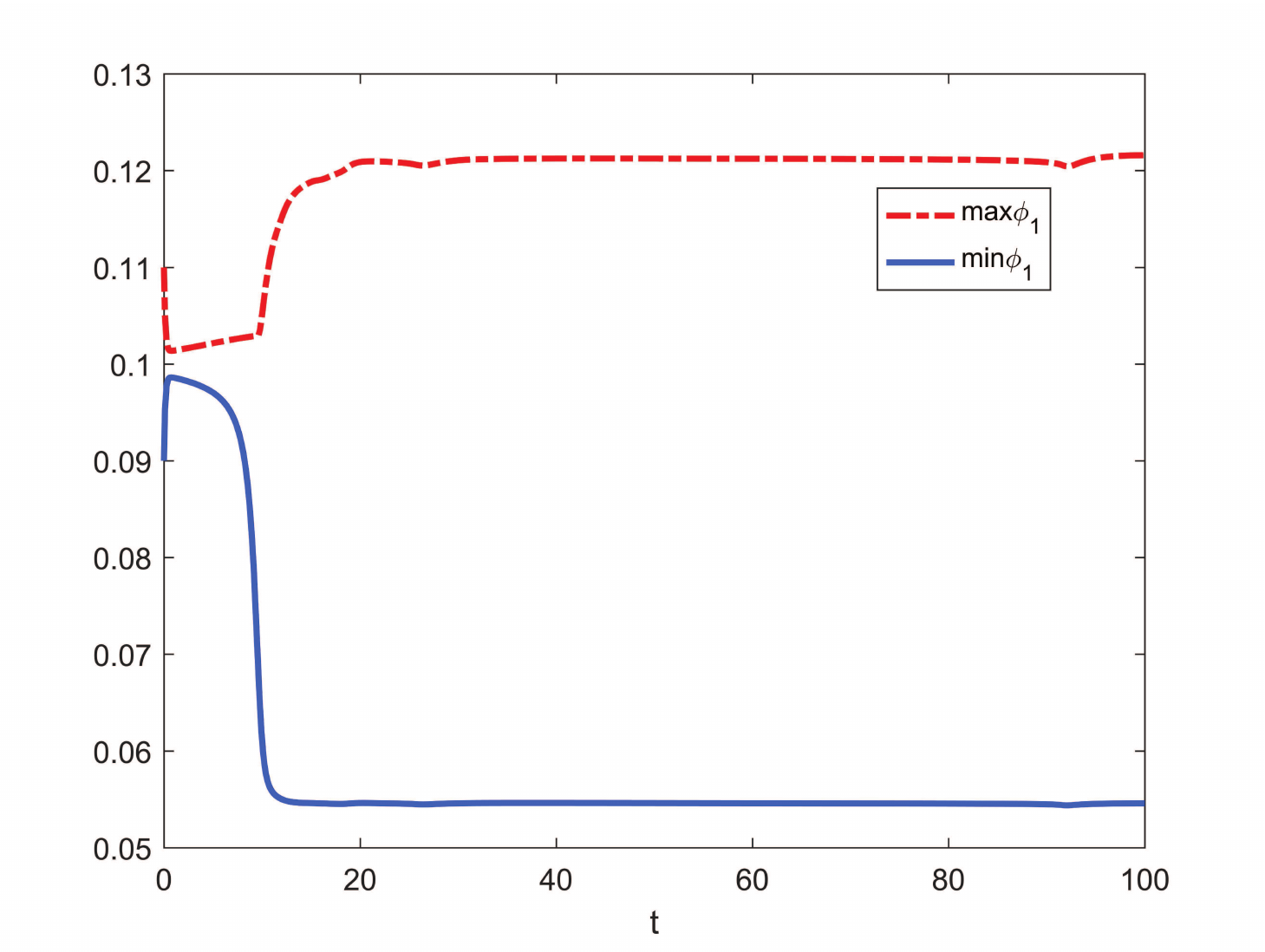}
		\end{subfigure}
		\begin{subfigure}{}
			\includegraphics[width=2.2in]{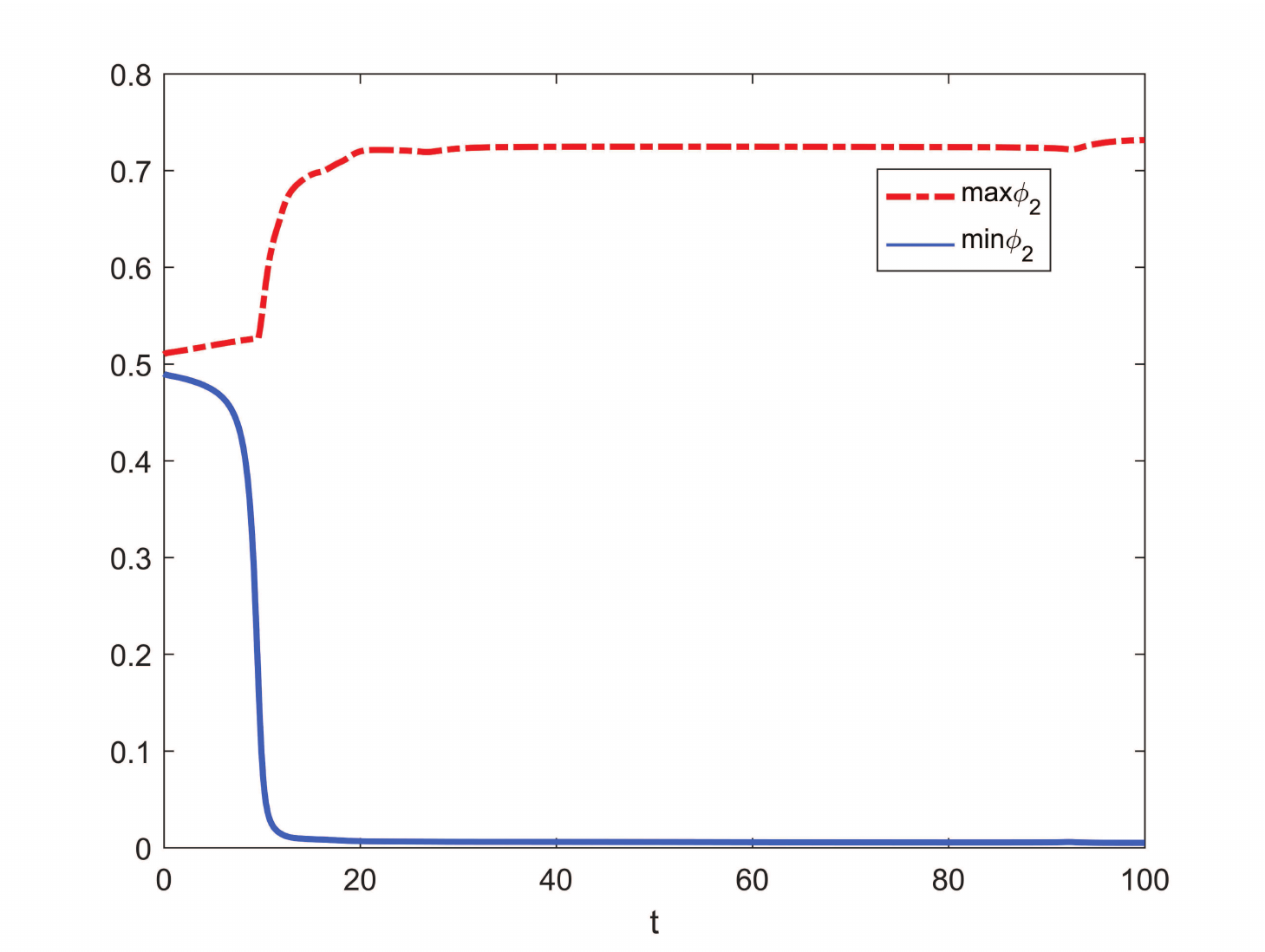}
		\end{subfigure}
	\end{center}
	\caption{Example \ref{example 1}: The time evolution of the maximum and minimum values for $\phi_1$ and $\phi_2$, respectively. }
	\label{fig:cosmaxmin}
\end{figure}
\begin{figure}[!htp]
\begin{center}	
\includegraphics[width=2.5in]{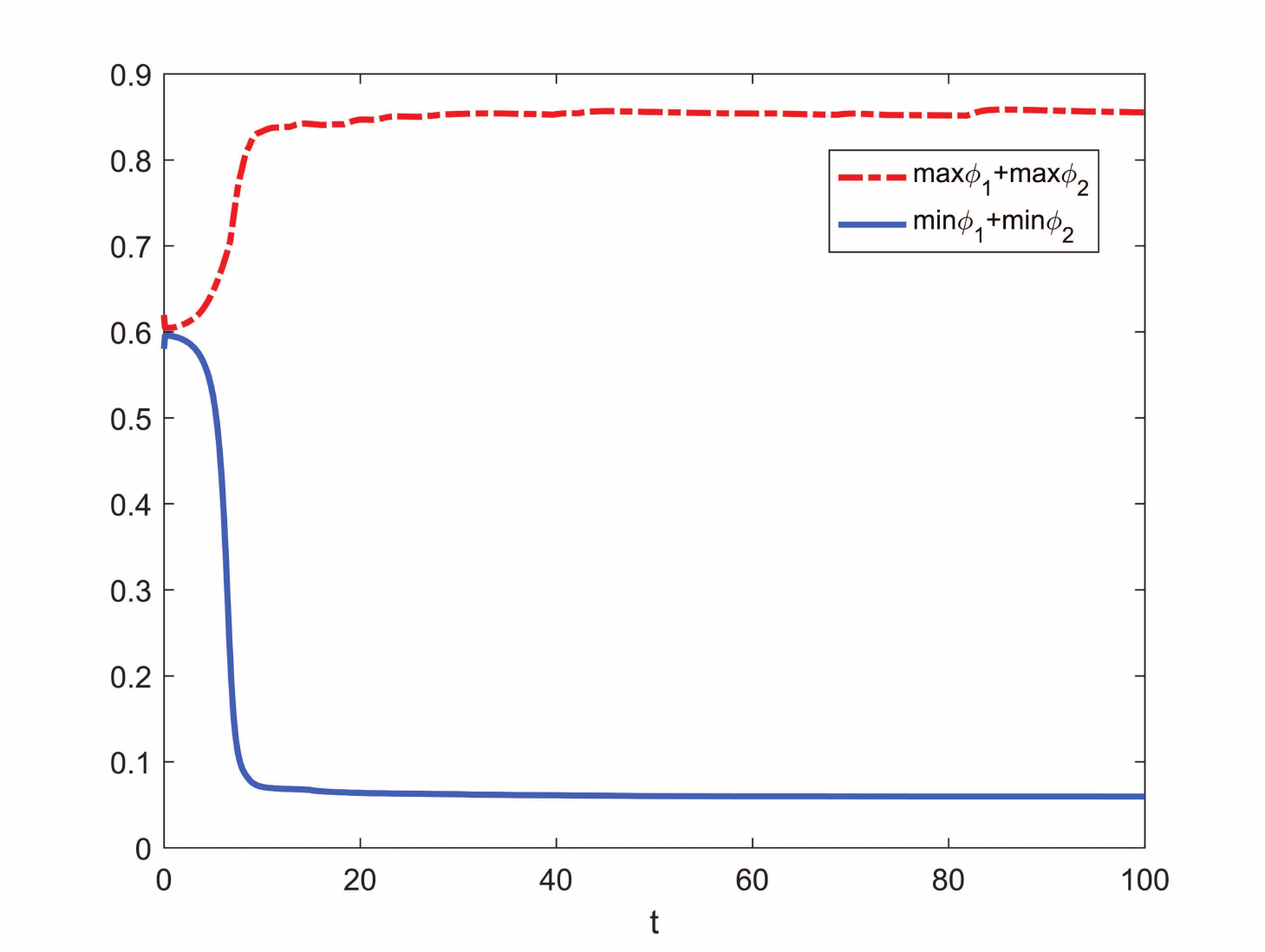}
	\caption{Example \ref{example 2}: The time evolutions of the maximum and minimum values for $\phi_1+\phi_2$, with $\dt = 1.0 \times 10^{-4}$.}\label{fig:coslong_maxminadd}
\end{center}
\end{figure}
\begin{example}\label{example 2}
A random initial perturbation is included in the initial data:
\begin{eqnarray}\label{eqn:init2}
\phi_1^0(x,y) = \phi_{10}+r_{i,j},\\
\phi_2^0(x,y) = \phi_{20}+r_{i,j},\nonumber
\end{eqnarray}
where the $r_{i,j}$ are uniformly distributed random numbers in [-0.01, 0.01].
\end{example}
This example is designed to study the influence of the different initial function and the statistical segment length on the phase transition of MMC hydrogels. We separately depict the phase states of the three variables, with four different $\varepsilon_i$ in \Cref{fig:randoma_i}. The
snapshot plots with four different $\phi_{10}$ and $\phi_{20}$ are presented in~\Cref{fig:randomphi1} and \Cref{fig:randomphi2}, respectively.
\begin{figure}[!htp]
\begin{center}	
\includegraphics[width=4.5in]{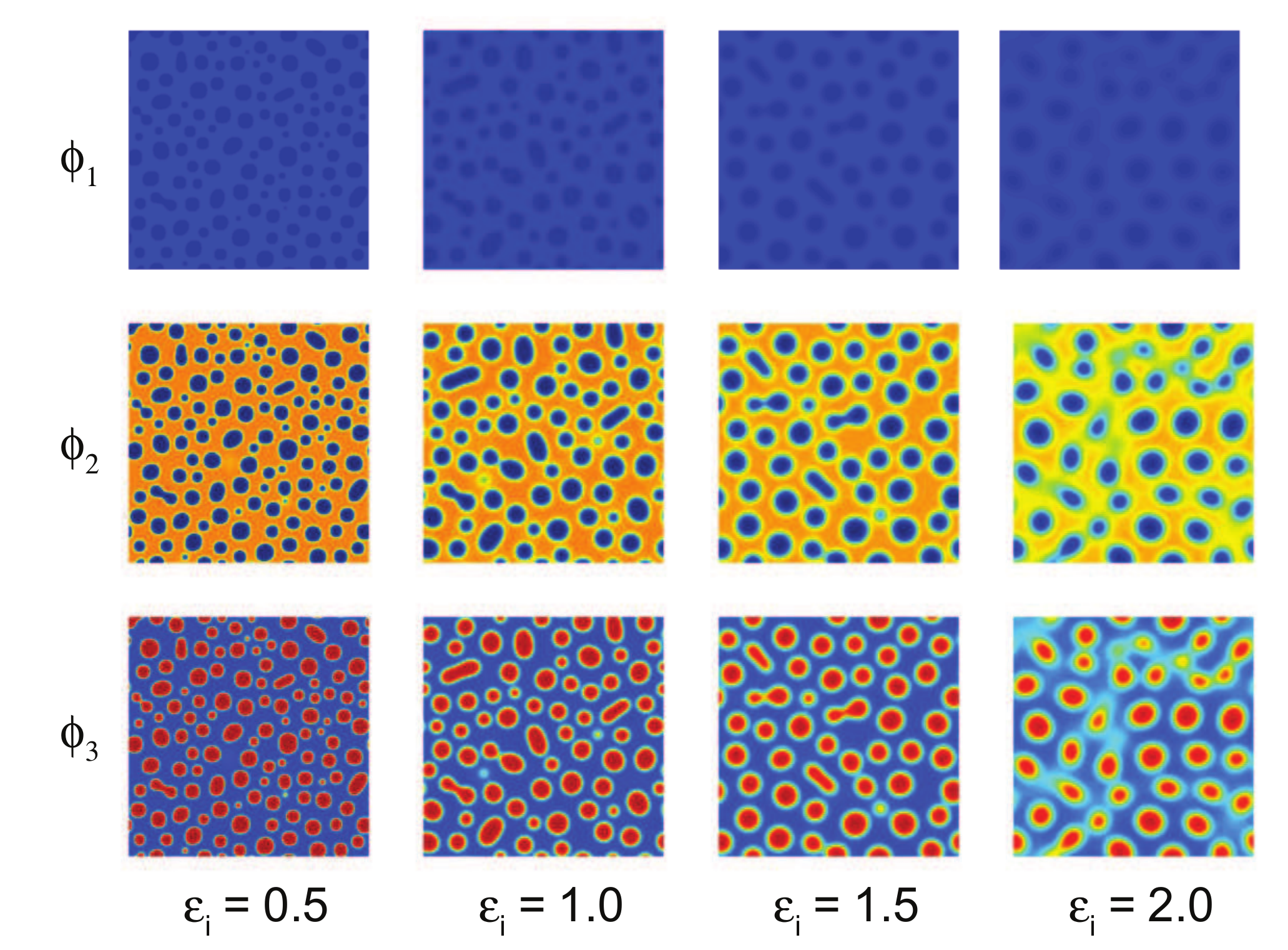}
	\caption{Example \ref{example 2}: The phase plots of three variables with different $\varepsilon_i$, $i = 1,2,3$ at $T=40$, and the time step size $\dt = 1.0 \times 10^{-4}$. }\label{fig:randoma_i}
\end{center}
\end{figure}
\begin{figure}[!htp]
\begin{center}	
\includegraphics[width=4.5in]{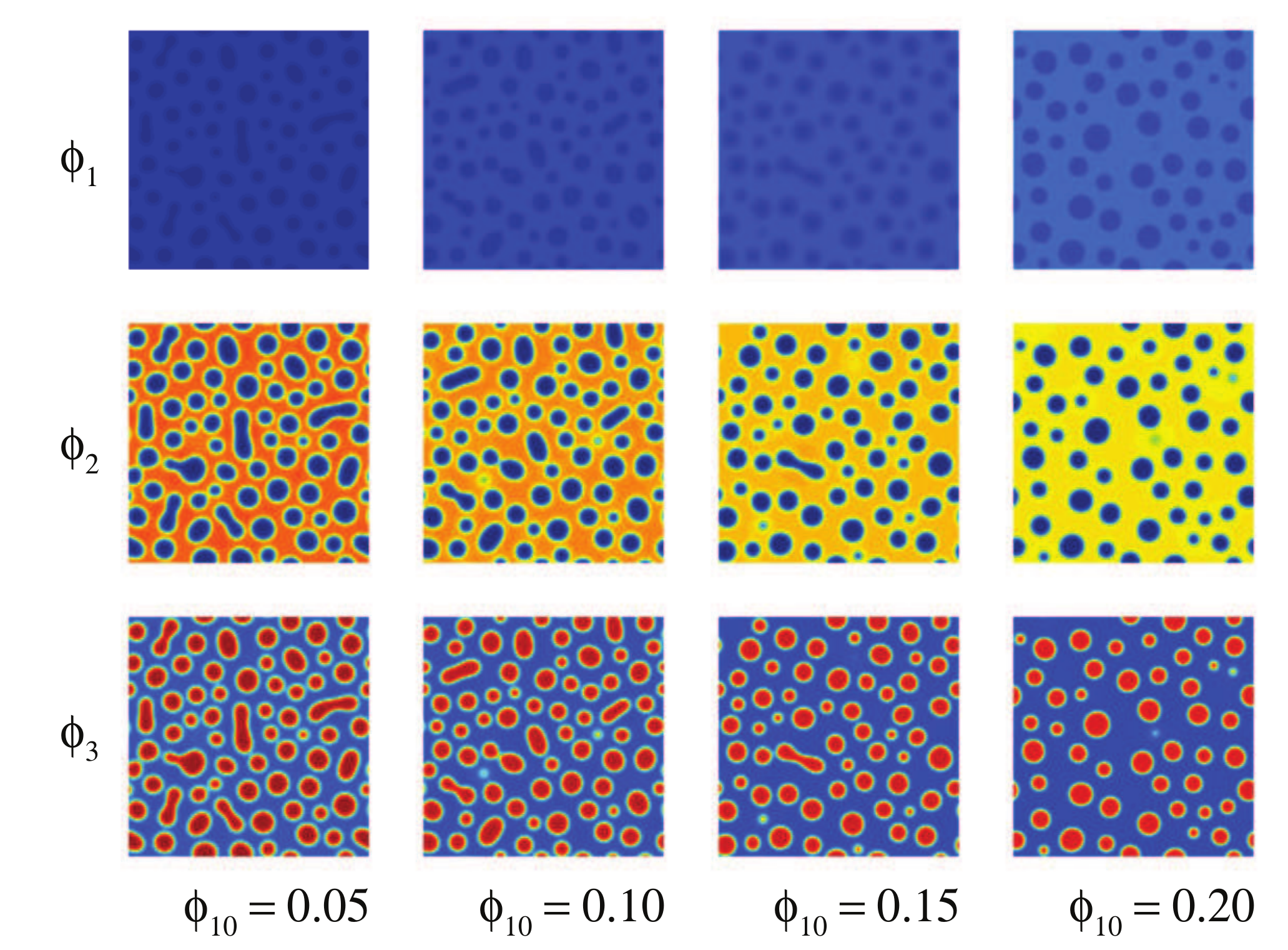}
	\caption{Example \ref{example 2}: The phase plots of three variables with different
$\phi_{10}$ at $T=40$, and the time step size $\dt = 1.0 \times 10^{-4}$. }\label{fig:randomphi1}
\end{center}
\end{figure}
\begin{figure}[!htp]
\begin{center}	
\includegraphics[width=4.5in]{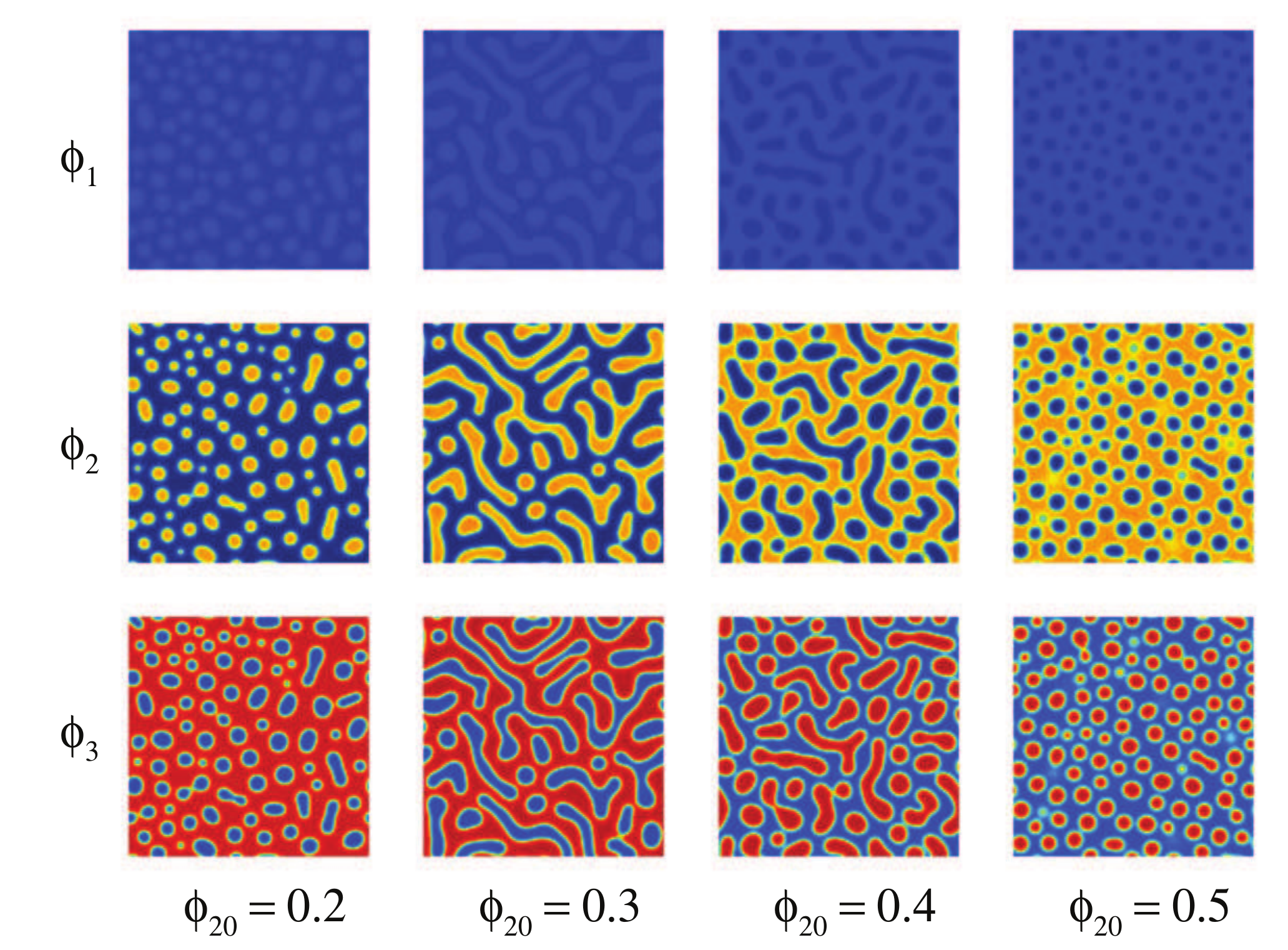}
	\caption{Example \ref{example 2}: The phase plots of three variables with different
$\phi_{20}$ at $T=20$, and the time step size $\dt = 1.0 \times 10^{-4}$. }\label{fig:randomphi2}
\end{center}
\end{figure}
\begin{example}\label{example 3}
The initial data is taken as:
\begin{eqnarray}\label{eqn:init3}
\phi_1^0(x,y) = 0.1+r_{i,j},\\
\phi_2^0(x,y) = 0.5+r_{i,j},\nonumber
\end{eqnarray}
where the $r_{i,j}$ are uniformly distributed random numbers in [-0.01, 0.01].
\end{example}
The energy evolution of the numerical solution (with $\dt = 1.0\times 10^{-4}$) is illustrated in~\Cref{fig:randomlong_energy}, which indicates an energy decay. In addition, we present the error evolution of the total mass of $\phi_1$ and $\phi_2$ in \cref{fig:randommass}. The maximum values and minimum values of $\phi_1$, $\phi_2$ and $\phi_1+\phi_2$ are displayed in \Cref{fig:randomlong_maxmin} and \Cref{fig:randomlong_maxminadd}. Moreover, in \Cref{fig:randomlong}, we plot the numerical solutions of $\phi_1$, $\phi_2$ and $\phi_3$ at a sequence of time instants to compare with the existing binary MMC results.
\begin{figure}[!htp]
\begin{center}	
\includegraphics[width=2.5in]{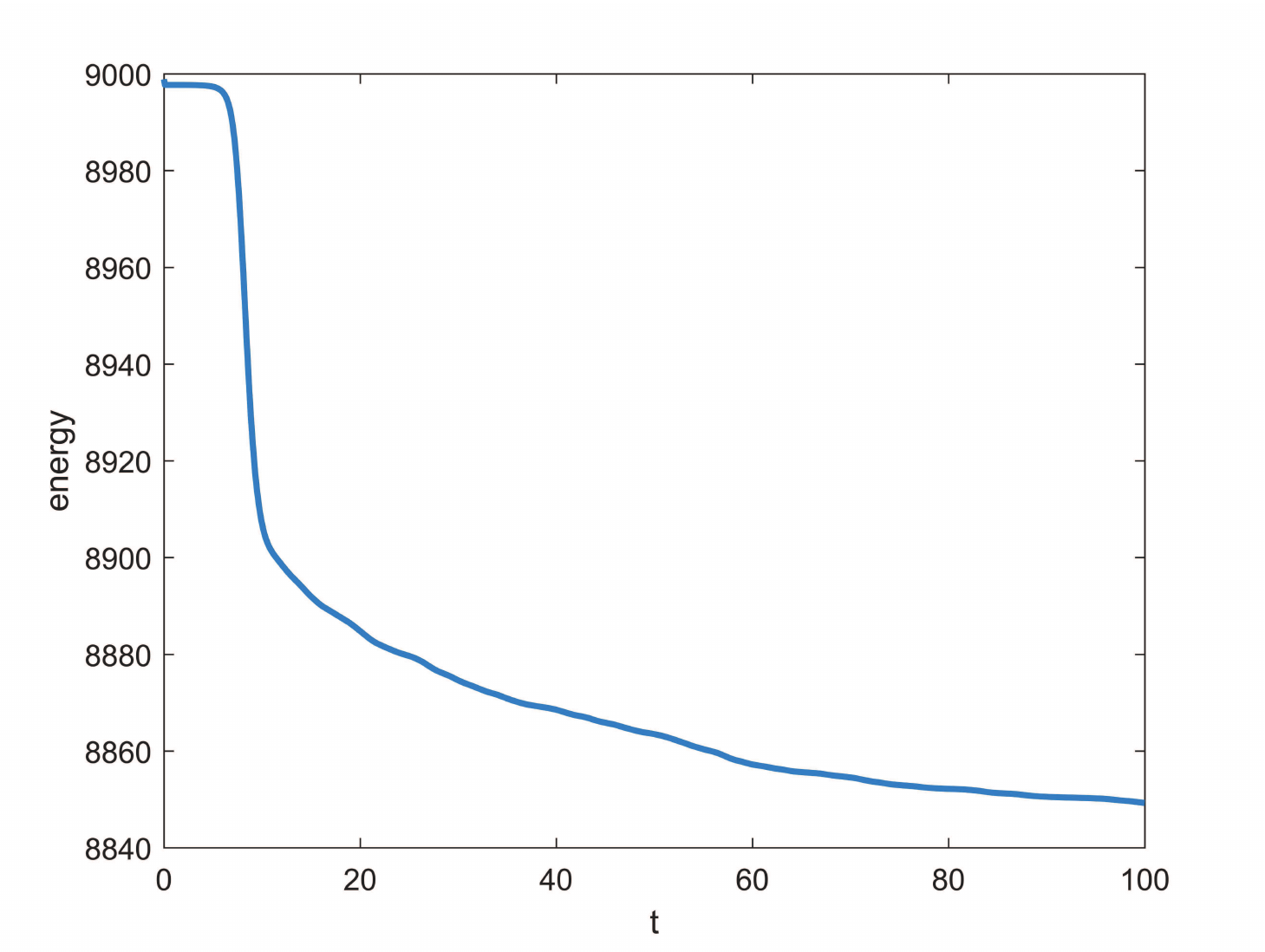}
	\caption{Example \ref{example 3}: Evolution of the energy over time, $\dt = 1.0 \times 10^{-4}$. }\label{fig:randomlong_energy}
\end{center}
\end{figure}
\begin{figure}[ht]
	\begin{center}
		\begin{subfigure}{}
			\includegraphics[width=2.2in]{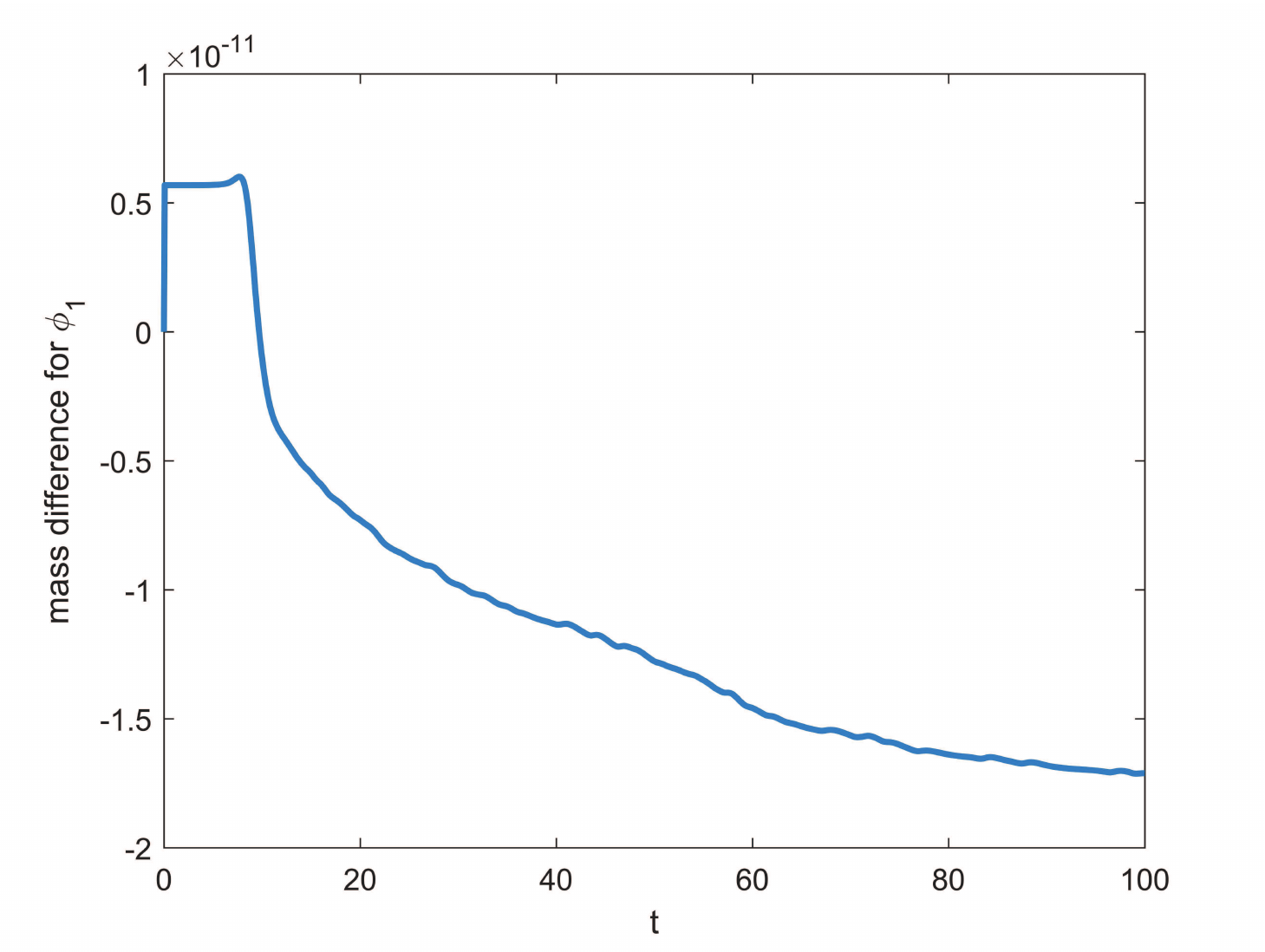}
		\end{subfigure}
		\begin{subfigure}{}
			\includegraphics[width=2.2in]{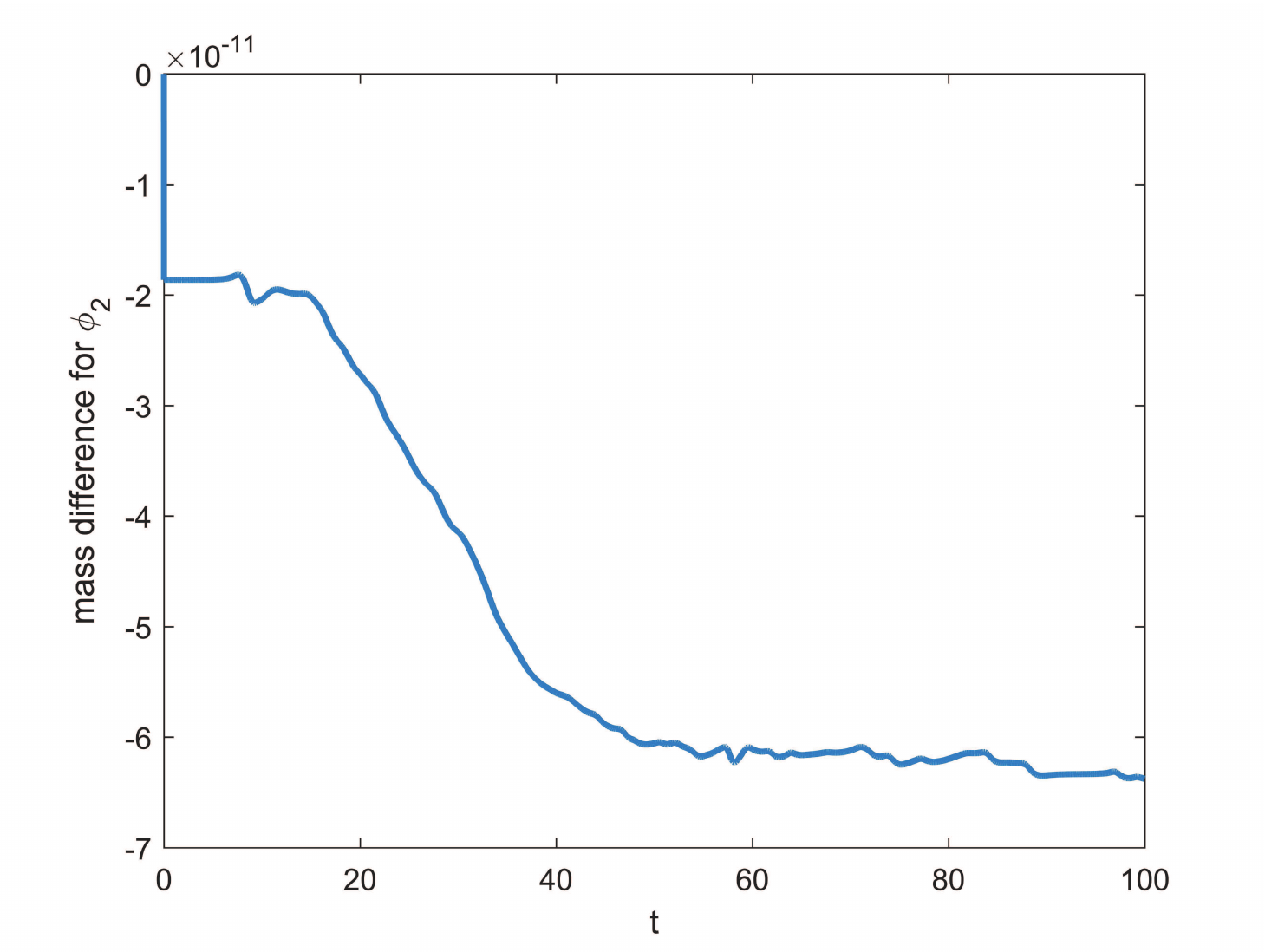}
		\end{subfigure}
	\end{center}
	 \caption{Example \ref{example 3}: The error development of the total mass for $\phi_1$ and $\phi_2$, respectively. }\label{fig:randommass}
\end{figure}
\begin{figure}[ht]
	\begin{center}
		\begin{subfigure}{}
			\includegraphics[width=2.2in]{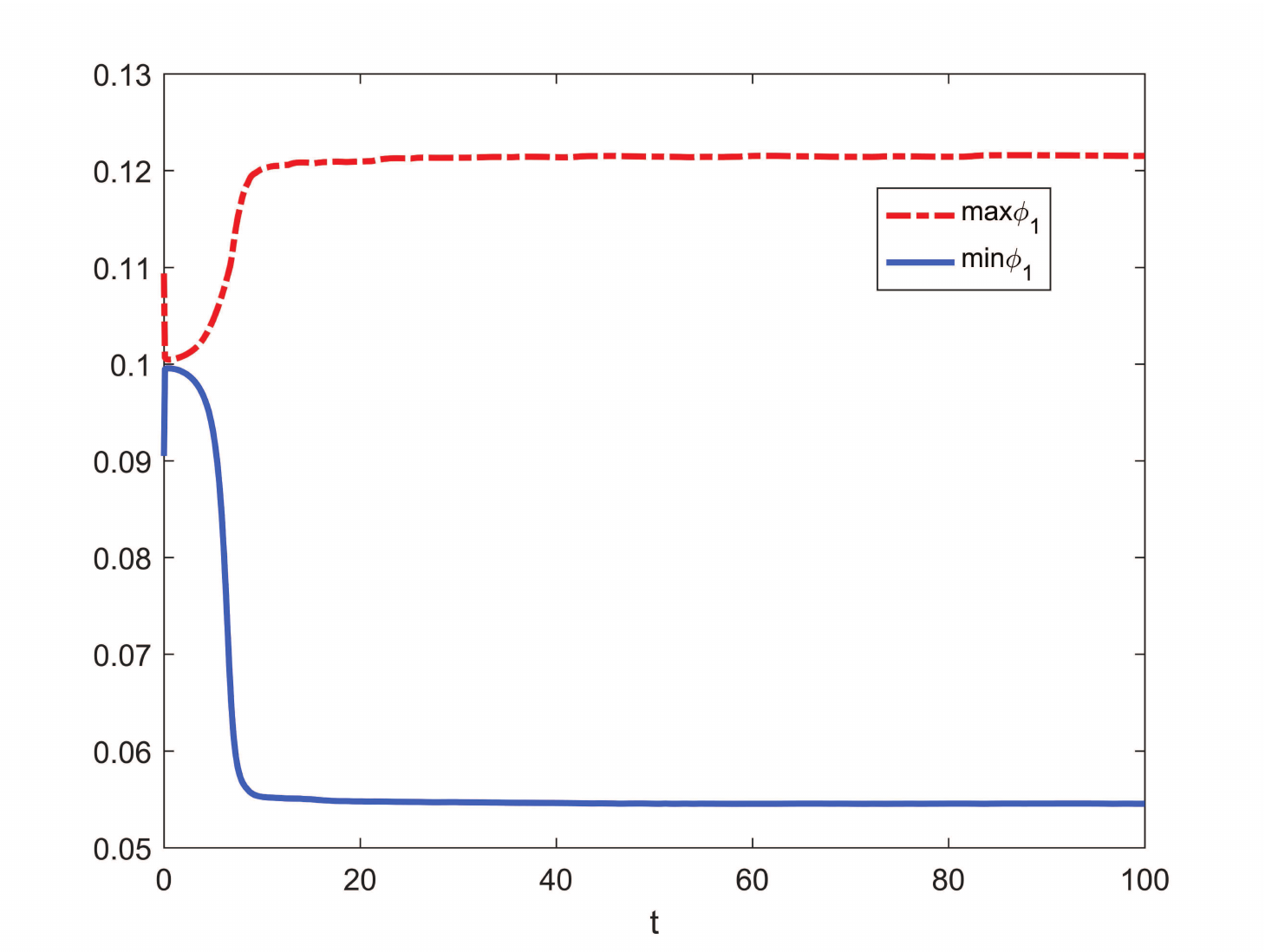}
		\end{subfigure}
		\begin{subfigure}{}
			\includegraphics[width=2.2in]{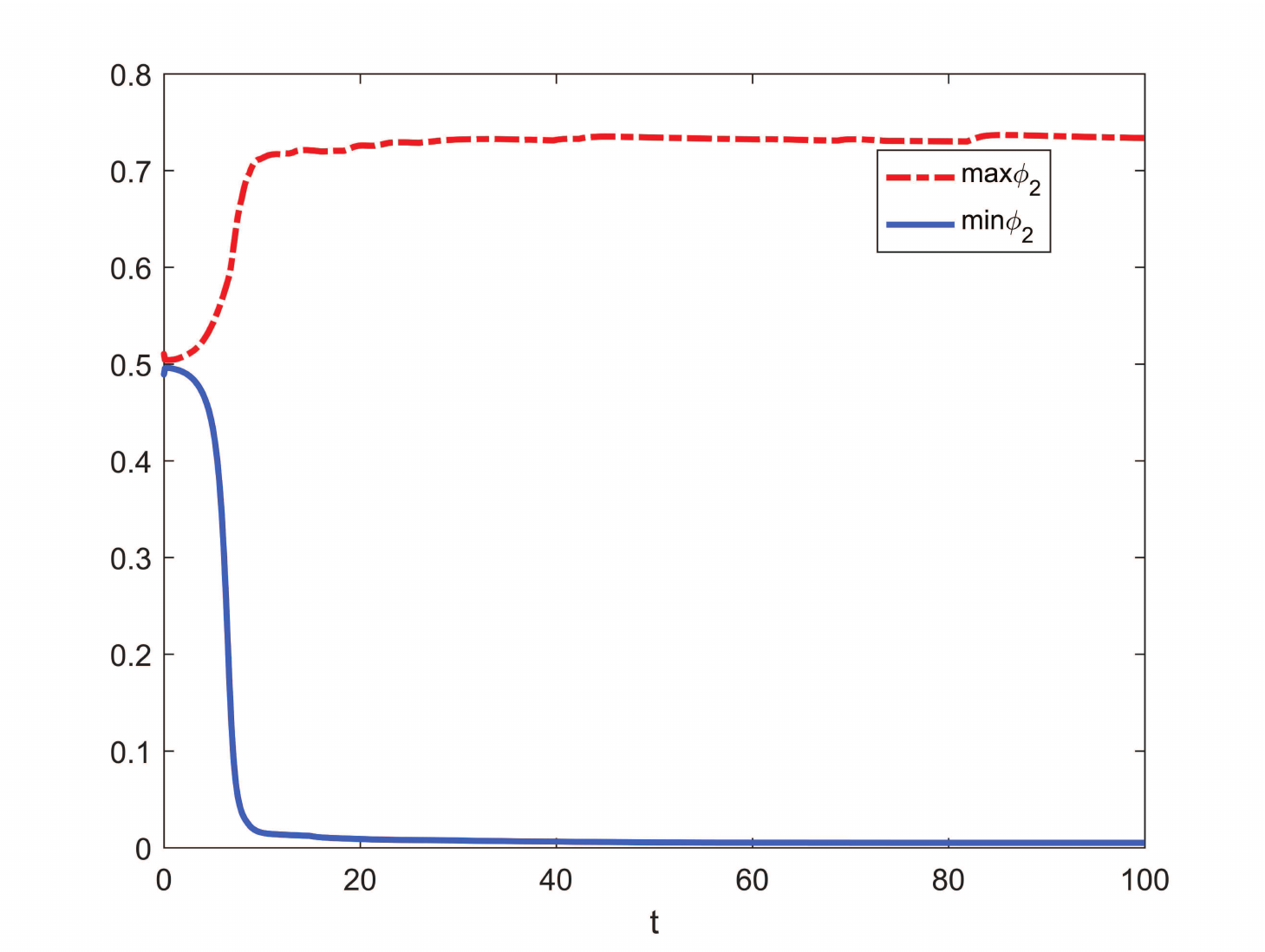}
		\end{subfigure}
	\end{center}
	\caption{Example \ref{example 3}:The time evolution of the maximum and minimum values for $\phi_1$ and $\phi_2$, respectively. }\label{fig:randomlong_maxmin}
\end{figure}
\begin{figure}[!htp]
\begin{center}	
\includegraphics[width=2.5in]{figures/randomlong_maxminadd}
	\caption{Example \ref{example 3}: The time evolution of the maximum and minimum values for $\phi_1+\phi_2$.}\label{fig:randomlong_maxminadd}
\end{center}
\end{figure}
\begin{figure}[!htp]
\begin{center}	
\includegraphics[width=4.5in]{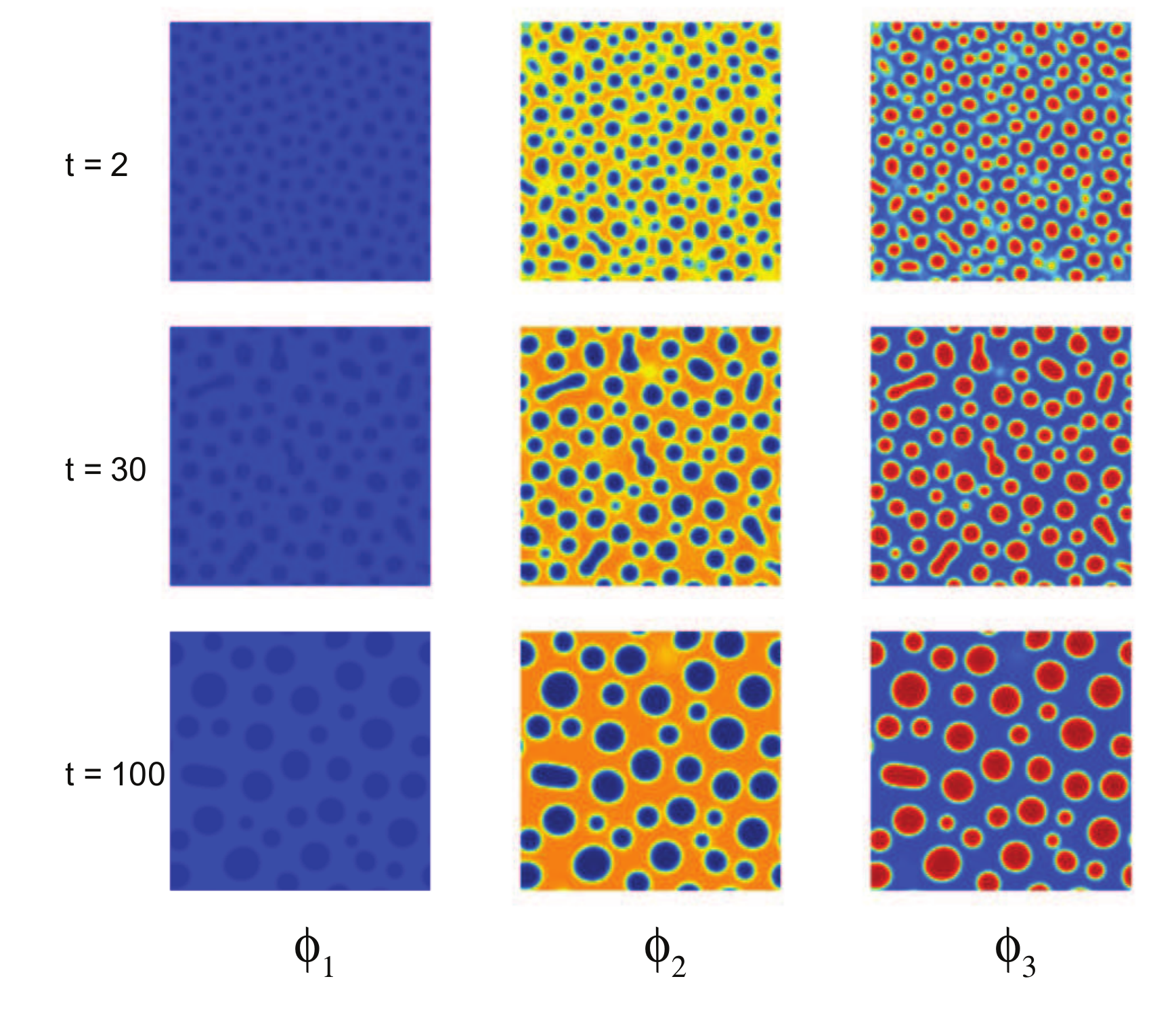}
	\caption{Example \ref{example 3}: Evolution of the three phase variables with at $t = 2, 30$ and $100$, with $\dt = 1.0 \times 10^{-4}$. }\label{fig:randomlong}
\end{center}
\end{figure}
%
\section{Concluding remarks}
\label{sec:conclusions}
In this paper, we develop a uniquely solvable, positivity preserving and unconditionally energy stable finite difference scheme for the ternary Cahn-Hilliard-like model, describing the dynamics of the MMC hydrogels system. The free energy functional of the phase model is of Flory-Huggins-deGennes type, dependent on three variables, which could be reduced to two variables by the total mass identity. The numerical scheme is designed based on the convex-concave decomposition of the physical energy, which is highly non-standard due to the multi phase variables involved. A theoretical justification of the positivity-preserving property has been established, by constructing a strictly convex discrete energy functional in two variables via the mass conservation identity, combined with the following subtle fact: the singular feature of the logarithmic functions ensures that a pair of minimizers could not occur on the limit values. In addition, the appearance of the highly nonlinear and singular coefficients in the surface diffusion part has also ensured the positivity-preserving property. The unique solvability and unconditional energy stability come from the convexity analysis. The FAS nonlinear multigrid method and Newton iteration algorithm are employed to improve the efficiency in the practical computation. A few numerical results have also been presented to demonstrate the robustness of the proposed scheme.

\section*{Acknowledgments}
L.X.~Dong is supported by the China Scholarship Council (CSC) during her visit in the University of Tennessee, Knoxville. Z.R.~Zhang is partlialy supported by the National Natural Science Foundation of China~(NSFC) No.11871105 and Science Challenge Project No. TZ2018002. C.~Wang is partially supported by the NSF DMS-2012669, S.M. Wise is partially supported by the NSF DMS-1719854, DMS-2012634.  

\bibliographystyle{siamplain}
\bibliography{MMC3term}
\end{document}